\documentclass[a4paper,10pt]{amsart}
\usepackage{amsfonts,enumitem}
\usepackage{amssymb,amscd,amsthm,dsfont}
\usepackage{amsmath}
\usepackage[svgnames]{xcolor}
\usepackage{xparse}
\usepackage{listings}
\usepackage{comment}
\usepackage{mathrsfs}

\usepackage[english]{babel}
\usepackage[T1]{fontenc}

\usepackage{todonotes}
\usepackage{etoolbox}
\usepackage{aliascnt}
\usepackage[misc]{ifsym}
\usepackage{bm}
\usepackage{bbm}
\usepackage{Wasysym}
\usepackage[all]{xy} %         ��   ?    ��     ?   ?[all]

\usepackage{stmaryrd}
\usepackage{float}
\usepackage{fancyhdr}
\usepackage{amsxtra,ifthen}
\usepackage{verbatim}

\usepackage[vcentermath]{youngtab}

\usepackage{geometry}
\usepackage{hyperref}

\usepackage{setspace}
\theoremstyle{plain}

\renewcommand{\H}{\mathcal{H}}

\newcommand{\lam}{\lambda}

\newcommand{\bvarnothing}{{\boldsymbol \varnothing}}

\def\bs{{\boldsymbol{s}}}

\def\bxi{{\boldsymbol \xi}}
\def\N{{\mathbb N}}
\def\Z{{\mathbb Z}}
\def\Q{{\mathbb{Q}}}

\def\Lam{\Lambda}

\def\PP{\mathscr{P}}

\def\blam{{\boldsymbol \lambda}}
\def\bmu{{\boldsymbol \mu}}

\def\brho{{\boldsymbol\rho}}
\def\bnu{{\boldsymbol \nu}}

\def\<{\langle}
\def\>{\rangle}

\DeclareMathOperator{\res}{res}

\DeclareMathOperator{\col}{col}

\newcommand{\add}{\operatorname{add}}
\newcommand{\rem}{\operatorname{rem}}

\def\Sym{\mathfrak{S}}

\theoremstyle{plain}

\newtheorem{mainresult}{Theorem}

\csdef{mainresultautorefname}{Theorem}

\newtheorem{theorem}{Theorem}[section]
\newtheorem{lemma}[theorem]{Lemma}
\newtheorem{proposition}[theorem]{Proposition}
\newtheorem{prop}[theorem]{Proposition}

\newtheorem{definition}[theorem]{Definition}
\newtheorem{dfn}[theorem]{Definition}

\newtheorem{example}[theorem]{Example}

\newtheorem{remark}[theorem]{Remark}

\numberwithin{equation}{section}

\newenvironment{equa}
  {\refstepcounter{theorem}%
   \begin{equation}}
  {\end{equation}}

\title{New runner removal theorems for the cyclotomic Hecke algebras of type $G(r,1,n)$}

\author[]{Jun Hu\textsuperscript{1}}
\author[]{Xiangyu Qi\textsuperscript{2,\Letter}}

\thanks{\Letter\, (Corresponding author) Xiangyu Qi,\quad E-mail: qixiangyumath@163.com}

\begin{document}
\maketitle

{\centering{\small \textsuperscript{1}  Key Laboratory of Algebraic Lie Theory and Analysis of Ministry of Education, \\
School of Mathematics and Statistics, Beijing Institute of Technology, Beijing, 100081, P.R.~China\\
E-mail: junhu404@bit.edu.cn\\}}
\vspace{5pt}
	
{\centering{\small \textsuperscript{2} School of Mathematics and Statistics,
	Beijing Institute of Technology,
	Beijing, 100081, P.R.~China\\
E-mail: qixiangyumath@163.com\\}}

\begin{abstract} For the Iwahori-Hecke algebras $\H_q(\Sym_n)$ of the symmetric group $\Sym_n$ at a primitive $e$-th root of unity, James and Mathas proved a theorem which relates $v$-decomposition numbers $d_{\lam\mu}^{e}(v)$ for different values of $e$, by adding ``empty runners'' to the abacus display  for the labelling partitions $\lam,\mu$. Fayers proved a similar theorem, which involves adding ``full'' runners to these abacus displays. In this paper we use Uglov's map from the set of $r$-abaci for $r$-partitions to the set of $e$-abaci for partitions to extend these theorems to the cyclotomic Hecke algebras of type $G(r,1,n)$.
\end{abstract}

\section{Introduction}
The cyclotomic Hecke algebras $\H_{r,n}$ of type $G(r,1,n)$, also known as the Ariki-Koike algebras, were first introduced in the work \cite[Definition 3.1]{AK}, \cite[Definition 4.1]{BM:cyc} and \cite[before Proposition 3.2]{C} as certain deformations of the group ring of the complex reflection group $G(r,1,n)$. They play important roles in the modular representation theory of finite groups of Lie type over fields of non-defining characteristics. These algebras have intimate connection with many different areas of mathematics such as affine Hecke algebras, finite groups of Lie type, and affine quantum groups etc. The representation theory of these algebras have been extensively studied in a number of literatures, say \cite{A1}, \cite{AK}, \cite{AM}, \cite{DJM}, \cite{DM}, \cite{HM}, \cite{LM} and \cite{Ma}.

Among other things, one of the deepest results in this area is Ariki's theorem \cite[Theorem 4.4]{A1} (sometimes also called the Ariki-Lascoux-Leclerc-Thibon's theory), which proved Lascoux-Leclerc-Thibon's conjecture \cite[Conjecture 6.9]{LLT96}. The conjecture asserted that the decomposition matrices of these algebras can be computed in characteristic zero via the evaluation at $q=1$ of the coefficient polynomials $d^{e,\bs}_{\blam,\bmu}(v)$ expressing the canonical bases $G_e^{\bs}(\bmu)$ of the Fock space in terms of its natural basis $s_\blam$, labelled by $r$-partitions. More recent work of Brundan-Kleshchev \cite[Theorem 1.1]{BrundanKleshchev09} shows that each block of these algebras can be identified with the cyclotomic quiver Hecke algebra associated with the affine cyclic quiver $A_{e-1}^{(1)}$ or the line quiver $A_{\infty}$. The latter algebras are $\Z$-graded and play important roles in the categorification theory of quantum groups. Moreover, Brundan-Kleshchev \cite[Theorems 5.3,5.6]{BK:GradedDecomp} showed that the $v$-decomposition numbers $d^{e,\bs}_{\blam,\bmu}(v)$ can be interpreted as the $\Z$-graded decomposition number of the $\Z$-graded Specht module over these cyclotomic quiver Hecke algebras.

These $v$-decomposition numbers $d^{e,\bs}_{\lam,\mu}(v)$ are actually some inverse parabolic Kazhdan-Lusztig polynomials. In the level one (i.e., $r=1$) case, James and Mathas \cite{JM} proved the so called ``empty'' runner removal theorem in which they relate the $v$-decomposition numbers $d^e_{\lam,\mu}(v), d^{e+1}_{\lam^+,\mu^+}(v)$ of Iwahori-Hecke algebras $\H_q(\Sym_n)$ of the symmetric group $\Sym_n$, where the partitions $\lam^+, \mu^+$ are obtained from the partitions $\lam,\mu$ by adding empty runners to the abacus displays for $\lam, \mu$ respectively. After that, Fayers \cite{Fay08} proves a similar theorem, which replaces adding empty runners with adding full runners to these abacus displays. Note that in the same paper \cite{JM} James and Mathas also proved the so called ``empty'' runner removal theorem for the Dipper-James's $q$-Schur algebras \cite{DJ}, using Varagnolo-Vasserot's result \cite[Theorem 6.3]{VV} on $v$-decomposition numbers in place of Ariki's theorem \cite[Theorem 4.4]{A1}.

Many respects the cyclotomic Hecke algebra $\H_{r,n}$ behaves in the same way as the Iwahori-Hecke algebra $\H_q(\Sym_n)$. Many of the combinatorial theorems concerning $\H_q(\Sym_n)$ have been generalised to the cyclotomic Hecke algebra $\H_{r,n}$, with the role of partitions being played by $r$-partitions. For example, the easy definition of $e$-restricted partition was generalized to the so-called ``Kleshchev $r$-partitions'', which has no simple non-recursive characterization. During their study of the generalizations of $e$-cores of partitions to the setting of $r$-partitions. Jacon and Lecouvey \cite{JL} made a very remarkable observation. They introduce a weight-preserving map, defined by Uglov, from the set of $(e,\bs)$-core $r$-partitions to the set of $e$-core partitions. They presented the notions of $(e,\bs)$-core (resp., reduced $(e,\bs)$-core) for all $r$-partition associated with a multicharge (resp., with certain special multicharge). All the theory can be derived from the level one case, except the core of an $r$-partition associated with a multicharge $\bs$ is now itself an $r$-partition associated with a possible different multicharge $\bs'$. This opens a new avenue for studying the modular representation theory of the cyclotomic Hecke algebra $\H_{r,n}$ via the Uglov map. For example, Li and the second author \cite{LQ2025} have used the Uglov map to introduce  new block invariants, called ``block moving vectors'', for the cyclotomic Hecke algebra $\H_{r,n}$, and to classify its representation-finite blocks. Li and Tan \cite{LT} gave new definitions of the $e$-core and $e$-weight of a multipartition associated with an arbitrary multicharge, obtained via its image under the Uglov map. They also generalized the notion of $[w:k]$-pairs to the cyclotomic setting and established a sufficient condition for Scopes equivalence. In \cite{LQT2026}, Li, the second author, and Tan used moving vectors to classify the core blocks of $\H_{r,n}$. This classification yields a criterion for Scopes equivalence between core blocks, interprets their simple module counting as classical Kostka numbers, and relates their graded decomposition numbers in characteristic zero to those of type $A$ Iwahori--Hecke algebras. In \cite{HHLQ}, Hu, Huang, Li, and the second author extended the Uglov map to all classical affine types by uniformly constructing Fock spaces and their combinatorial models (Maya diagrams and abaci). By characterizing the joint highest weight vectors under the Virasoro action, they provided a character-free proof of level-rank duality. In particular, this duality allows the defect of cyclotomic KLR algebras of classical affine types to be expressed as the sum of the components of the corresponding moving vector. Furthermore, this combinatorial framework was recently utilized by Li, Zhang, and Zhu \cite{LZZ} to study cores and Diophantine equations.

In this paper we use the Uglov map to define adding empty runner and adding full runner in the setting of $r$-partitions, see Definitions \ref{lamPlus} and \ref{Plusk}. Then we analyze the image of the canonical bases elements under the induced map on Fock space induced from adding empty runner and adding full runner, see Propositions \ref{KeyCanonicalBases} and \ref{KeyCanonicalBases2}. The following theorem is the first main result of this paper, which generalize James-Mathas's empty runner removal theorem to the cyclotomic Hecke algebra setting.

\begin{mainresult}\label{mainthm1} Fix an integer $0 \le \alpha < e$ and $\bmu\in\PP_{r,n}$. Let $\bs\in \tilde{\mathcal A}^r_e(\bmu)$. Assume that $\bs\in \tilde{\mathcal A}^r_e(\blam)$ whenever $(\blam,\bs)$ and $(\bmu,\bs)$ lie in the same block of $\H_{r,n}^{\Lam_\bs}$.  Let $\bmu^{+}$ be defined using $\bs$ and $\alpha$ as in Definition \ref{lamPlus}. Suppose that $\bmu$ is $(e,\bs)$-regular. Then $\bmu^+$ is $(e+1,\bs)$-regular, and for any $r$-partition $\brho$, $d^{e+1,\bs}_{\brho\bmu^{+}}(v)\neq 0$ only if $\brho=\bnu^{+}$ for some $\bnu\in\PP_{r,n}$. Moreover, $d^{e,\bs}_{\blam \bmu}(v)=d^{e+1,\bs}_{\blam^{+}\bmu^{+}}(v)$ for any $\blam\in\PP_{r,n}$. In particular\footnote{Here in order to avoid the confusion, we use $\hat{S}^{\blam^{+}}$ and $\hat{D}^{\bmu^{+}}$ to denote the Specht module and the simple module over $\H_{r,n^+}^{\Lambda_\bs}$ at a primitive $(e+1)$-th root of unity, where $n^+:=|\bmu^+|$.}, $$
[S^\blam : D^\bmu]=[\hat{S}^{\blam^{+}} : \hat{D}^{\bmu^{+}}].
$$
\end{mainresult}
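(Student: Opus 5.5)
Following James--Mathas, I will carry the whole argument on the level of the Fock space and use the Uglov map to reduce the needed combinatorics to level one. Let $\mathcal F^{e,\bs}$ and $\mathcal F^{e+1,\bs}$ be the level-$r$ Fock spaces with standard bases $s_\blam$. Define the $\Q(v)$-linear map $\Phi:\mathcal F^{e,\bs}\to\mathcal F^{e+1,\bs}$ by $s_\blam\mapsto s_{\blam^+}$ on the relevant block (this is the ``induced map on Fock space'' from Definition \ref{lamPlus}). The statement will follow from Proposition \ref{KeyCanonicalBases}, which identifies $\Phi(G^{\bs}_e(\bmu))$ with $G^{\bs}_{e+1}(\bmu^+)$ under the stated hypotheses. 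Here $\bs\in\tilde{\mathcal A}^r_e(\blam)$ for every $\blam$ in the block of $\bmu$, so that Uglov's map is defined uniformly on that block.

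\textbf{Step 1 (combinatorics).} Through the Uglov map, $\blam\mapsto\blam^+$ becomes the level-one operation of inserting an empty runner at position $\alpha$ in the $e$-abacus. From this I will check three things. First, the map is injective and preserves the dominance/Uglov order used to define canonical bases, as well as $(e,\bs)$-regularity (Kleshchev, or Uglov-regular, in the $\bs$-adapted sense); this gives that $\bmu^+$ is $(e+1,\bs)$-regular. Second, $\Phi$ carries the $e$-residue content of a block to an $(e+1)$-residue content, and it does so in a way that does not depend on $\blam$. Third, the $(e+1)$-block of $\bmu^+$ contains no $r$-partitions other than the images $\bnu^+$; this is the level-one fact that a block is determined by its core and weight, and an empty runner stays empty under all moves inside the block.

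\textbf{Step 2 (bar involution).} I will show that $\Phi$ intertwines the bar involutions on the relevant weight spaces, i.e.\ $\Phi(\overline{s_\blam})=\overline{\Phi(s_\blam)}$. My first approach is Uglov's realisation of the level-$r$ Fock space, together with its bar involution, inside a semi-infinite wedge space, where the bar involution is given by straightening $v$-wedges. Inserting an empty runner commutes with the straightening rules, as in the James--Mathas computation. The alternative route writes $G_e^\bs(\bmu)$ through the LLT-type algorithm via divided powers $f_i^{(k)}$. It then matches each $f_i^{(k)}$ acting on the $e$-side with a product of $(e+1)$-divided powers acting on the $+$-side: the new runner shifts residues $\ge\alpha$ by one and inserts residue $\alpha$ with no addable or removable nodes.

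\textbf{Step 3 (conclusion).} With the bar involution preserved, $\Phi(G^\bs_e(\bmu))=s_{\bmu^+}+\sum_\blam d^{e,\bs}_{\blam\bmu}(v)s_{\blam^+}$ is bar-invariant. Its off-diagonal coefficients lie in $v\Z[v]$, and triangularity survives because order is preserved. Uniqueness of canonical bases then gives $\Phi(G^\bs_e(\bmu))=G^\bs_{e+1}(\bmu^+)$. Both the equality $d^{e,\bs}_{\blam\bmu}(v)=d^{e+1,\bs}_{\blam^+\bmu^+}(v)$ and the vanishing for $\brho$ not of the form $\bnu^+$ follow. Evaluating at $v=1$ and applying Ariki's theorem \cite[Theorem 4.4]{A1} (characteristic zero, with regular labels indexing the simples) gives $[S^\blam:D^\bmu]=[\hat S^{\blam^+}:\hat D^{\bmu^+}]$.

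The main obstacle is Step 2. Uglov's straightening in level $r$ mixes the charges through the shift $\bs\mapsto\bs'$, and the hypothesis on $\tilde{\mathcal A}^r_e$ is exactly what should keep the wedge indices inside a window where inserting a runner is order-compatible. Checking this carefully is where I expect the real work.
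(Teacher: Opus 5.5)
\textbf{Gap: Step 2 as you intend to carry it out fails, and so does the third claim of Step 1 that supports it.}

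The inserted runner is empty only at nonnegative positions. It carries infinitely many beads at negative positions, and inside the $(e+1)$-block of $\bmu^+$ these beads can slide down. This already happens for $r=1$. Take $e=2$, $\bs=(2)$, $\alpha=0$, $\bmu=(2)$. The block of $\bmu$ is $\{(2),(1^2)\}$, so every hypothesis of the theorem holds, and one computes $(2)^+=(4,1)$ and $(1^2)^+=(3,2)$. However, the $3$-block of $(4,1)$ is $\{(4,1),(3,2),(1^5)\}$. The partition $(1^5)$ has a bead at position $0$ of the new runner and a gap at $-3$, so it is not of the form $\bnu^+$.

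Consequently the intertwining $\Phi(\overline{s_\blam})=\overline{\Phi(s_\blam)}$ is false for the Fock-space bar involution (Leclerc--Thibon, i.e.\ Uglov's at $r=1$):
\begin{itemize}
\item $\overline{s_{(1^2)}}=s_{(1^2)}$, because $(1^2)$ is minimal in its block;
\item $G^+_3((3,2))=s_{(3,2)}+v\,s_{(1^5)}$ and $G^+_3((1^5))=s_{(1^5)}$ give $\overline{s_{(3,2)}}=s_{(3,2)}+(v-v^{-1})s_{(1^5)}\neq\Phi(\overline{s_{(1^2)}})$.
\end{itemize}
A straightening argument of James--Mathas type can only work where the new runner carries no beads at all, i.e.\ on finite abaci with boundedly many parts. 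In the semi-infinite Fock space where $G^{\bs}_{e+1}(\bmu^+)$ lives, the image of $\Phi$ is not a union of weight spaces, and the argument breaks.

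The wedge route has a second problem. Uglov's realisation at charge $\bs\in\overline{\mathcal A}^r_e$ gives the $U_v(\widehat{\mathfrak{sl}}_e)$-action whose $v$-exponents come from charged contents. The paper defines $d^{e,\bs}_{\blam\bmu}(v)$ and $(e,\bs)$-regularity using the component order instead. So the wedge route would compute different polynomials, with a different set of regular labels in general. Passing to an asymptotic charge does not fix this, because $\blam^+$ depends on the integers $s_j$ themselves, not only on their residues.

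\textbf{What works.} Your alternative route is the paper's proof, but it must be run only inside $M^+_\bs$, never on individual $s_\blam$. You need the following.
\begin{enumerate}
\item[(i)] A bar-invariant $A_\bmu=F^{(a_1)}_{i_1}\cdots F^{(a_m)}_{i_m}\bvarnothing$ that is unitriangular for an order preserved by $\blam\mapsto\blam^+$. The paper uses Jacon's staggered sequences and the lexicographic order (Lemma~\ref{4.9}).
\item[(ii)] The commutation $\varTheta\circ F^{(a)}_i={}^{\alpha}\!F^{(a)}_i\circ\varTheta$, with ${}^{\alpha}\!F^{(a)}_\alpha=\tilde F^{(a)}_{\alpha+1}\tilde F^{(a)}_{\alpha}$. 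The key input is Lemma~\ref{no+}: $\blam^+$ has no removable $\alpha$-nodes and no addable $(\alpha+1)$-nodes. Your phrase ``no addable or removable nodes'' is wrong. $\blam^+$ does have addable $\alpha$-nodes and removable $(\alpha+1)$-nodes, including extra ones coming from adjacent bead pairs in $L_\bs(\blam)$. These extra contributions cancel between the exponents of $\tilde F_\alpha$ and $\tilde F_{\alpha+1}$.
\item[(iii)] Bar-invariance of $s_{\bvarnothing^+}$. This is not automatic, since $\bvarnothing^+\neq\bvarnothing$ in general; the paper uses that $\bvarnothing^+$ lies in a weight-zero, hence simple, block.
\item[(iv)] An induction on $\prec$ writing $G^\bs_e(\bmu)=A_\bmu-\sum_\bnu\alpha_{\bnu\bmu}G^\bs_e(\bnu)$ with bar-invariant coefficients.
\end{enumerate}
Your Step 3 then goes through. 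The vanishing of $d^{e+1,\bs}_{\brho\bmu^+}(v)$ for non-images follows from the resulting identity, not from any description of the block.

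Finally, the $(e+1,\bs)$-regularity of $\bmu^+$ is only asserted in your proposal. The paper proves it by induction on good nodes, treating $i=\alpha$ with Fayers' lemma that $\sigma_{\alpha+1}$ preserves regularity when there are no addable $(\alpha+1)$-nodes.
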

We refer the readers to Section 2 and Section 3 for unexplained notations here. Note that the cyclotomic Hecke algebra $\H_{r,n}^{\Lam_\bs}$ depends only on $e$ and the multi-set $\{s_j+e\Z\mid 1\leq j\leq r\}$ but not on the multi-set $\{s_j\in\Z\mid 1\leq j\leq r\}$. Given any cyclotomic Hecke algebra $\H_{r,n}^{\Lam_\bs}$, we can always replace certain $s_j$ by $s_j+te$ for some $1\leq j\leq r$ and $t\in\Z$ so that $\bs=(s_1,\cdots,s_r)$ satisfies that $\bs\in \tilde{\mathcal A}^r_e(\blam)$ without changing the algebra $\H_{r,n}^{\Lam_\bs}$ and the modules $S^\blam, D^\bmu$. The following theorem is the second main result of this paper, which generalize Fayers's full runner removal theorem to the cyclotomic Hecke algebra setting.

\begin{mainresult}\label{mainthm2}
Fix $k\in\N, \bs\in \overline{\mathcal A}^r_e$ and an integer $0 \le \alpha < e$. Let $\bmu\in\PP_{r,n}$. Let $\bmu^{+k}$ be defined using $k, \bs$ and $\alpha$ as in Definition \ref{Plusk}. Suppose that $\bmu$ is $(e,\bs)$-regular and $(k-1)e \ge s_r+n$.  Then $\bmu^{+k}$ is $(e+1,\bs^{+k})$-regular, and for any $r$-partition $\brho$, $d^{e+1,\bs^{+k}}_{\brho\bmu^{+k}}(v)\neq 0$ only if $\brho=\bnu^{+k}$ for some $\bnu\in\PP_{r,n}$. Moreover, $d^{e,\bs}_{\blam \bmu}(v)=d^{e+1,\bs^{+k}}_{\blam^{+k}\bmu^{+k}}(v)$ for any $\blam\in\PP_{r,n}$. In particular, $$
[S^\blam : D^\bmu]=[\hat{S}^{\blam^{+k}} : \hat{D}^{\bmu^{+k}}].
$$
\end{mainresult}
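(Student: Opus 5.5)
Following Fayers' level one argument, I transport everything to level one with the Uglov map, where the full runner machinery is available, and then come back. The tool is the map $\bvarnothing\mapsto$ on Fock spaces induced by $\blam\mapsto\blam^{+k}$ (Definition \ref{Plusk}), whose action on canonical basis elements is described in Proposition \ref{KeyCanonicalBases2}. The hypothesis $(k-1)e\ge s_r+n$ should guarantee that, in the Uglov $e$-abacus of every $\blam\in\PP_{r,n}$ with charge $\bs\in\overline{\mathcal A}^r_e$, the inserted runner sits at position $\alpha$ and is \emph{full} up to a level beyond every moved bead. So $\blam^{+k}$ is well defined as an $r$-partition with multicharge $\bs^{+k}$, and its Uglov image is the level one full-runner addition of Fayers applied to the Uglov image of $\blam$.

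\textbf{Step 1 (combinatorial compatibility).} I check that $\blam\mapsto\blam^{+k}$ is injective on $\PP_{r,n}$, preserves blocks, and is compatible with the dominance/Uglov order used to define canonical bases. I also check that it sends $(e,\bs)$-regular (Uglov/Kleshchev) multipartitions to $(e+1,\bs^{+k})$-regular ones. In Uglov's description, regularity is read off from the abacus by a bead-moving criterion that ignores a full runner, and this gives the first assertion.

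\textbf{Step 2 (canonical bases).} The bar involution on the level $r$ Fock space is computed by Uglov's straightening rules on $e$-abaci (semi-infinite wedges). I show that the linear map $\Phi:s_\blam\mapsto s_{\blam^{+k}}$ intertwines the bar involutions, up to a uniform sign/power of $v$ that cancels. The reason is that straightening across a full runner only contributes the fixed factor that Fayers computed. Since $\Phi$ also preserves the triangularity $G(\bmu)\in s_\bmu+\sum v\Z[v]s_\blam$, uniqueness of the canonical basis gives $\Phi(G_e^{\bs}(\bmu))=G_{e+1}^{\bs^{+k}}(\bmu^{+k})$. This is exactly Proposition \ref{KeyCanonicalBases2}, which I may invoke.

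\textbf{Step 3 (conclusion).} Comparing the coefficients of $G_{e+1}^{\bs^{+k}}(\bmu^{+k})=\sum_\blam d^{e,\bs}_{\blam\bmu}(v)s_{\blam^{+k}}$ does two things. It shows that $d^{e+1,\bs^{+k}}_{\brho\bmu^{+k}}(v)=0$ unless $\brho$ lies in the image, and it gives $d^{e,\bs}_{\blam\bmu}(v)=d^{e+1,\bs^{+k}}_{\blam^{+k}\bmu^{+k}}(v)$. Evaluating at $v=1$ and applying Ariki's theorem \cite[Theorem 4.4]{A1} on both sides, for $\H_{r,n}^{\Lam_\bs}$ and for $\H_{r,n^{+}}^{\Lam_{\bs^{+k}}}$, yields $[S^\blam:D^\bmu]=[\hat S^{\blam^{+k}}:\hat D^{\bmu^{+k}}]$. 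Here both labellings are regular in the Uglov sense, matching the parametrization of simples by $(e,\bs)$-regular multipartitions.

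The main obstacle is Step 2. I must ensure that the inserted runner remains full throughout every straightening step that arises when expanding $\overline{s_\blam}$, so that Fayers' level one sign bookkeeping applies verbatim. This is where $(k-1)e\ge s_r+n$ and $\bs\in\overline{\mathcal A}^r_e$ are used: they keep all wedge entries that occur within the region where the new runner is full.
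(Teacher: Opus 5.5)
Your Step 3 is exactly how the paper proves Theorem B: it deduces the theorem in one line from Proposition \ref{PlusRegular2}, Proposition \ref{KeyCanonicalBases2} and Ariki's theorem. So your proposal is sound only if you cite those two propositions. The justifications you give for Steps 1 and 2 do not work, and calling your Step 2 argument ``exactly'' Proposition \ref{KeyCanonicalBases2} is wrong.

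\textbf{Step 2.} In this paper the $U_v(\widehat{\mathfrak{sl}}_e)$-structure on $\mathcal F_\bs$ is the one in (\ref{action1}). It is governed by the order $\succ$ (component first, then row), which does not depend on the integers $s_j$. The bar involution is defined only on $M_\bs=U_v(\widehat{\mathfrak{sl}}_e)\bvarnothing$. Uglov's bar involution on the whole Fock space, computed by straightening wedges, belongs to his own module structure, where $i$-nodes are compared by charged contents. For $\bs\in\overline{\mathcal A}^r_e$ and $r\ge 2$ that order is not $\succ$. Uglov's canonical basis of $V(\Lambda_\bs)$ is then indexed by FLOTW-type (Uglov) multipartitions, not by the $(e,\bs)$-regular ones used here. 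Intertwining Uglov's bar involutions would at best give a runner removal theorem for Uglov's polynomials, not for $d^{e,\bs}_{\blam\bmu}(v)$. For the same reason, your Step 3 remark that the labels are ``regular in the Uglov sense'' is wrong in this charge range. The labelling in Ariki's theorem here is the Kleshchev-type one defined via $\succ$.

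\textbf{Step 1.} There is no bead-moving criterion for this notion of regularity. The introduction stresses that it has no simple non-recursive characterization, so ``ignoring a full runner'' proves nothing.

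\textbf{What the paper does instead.} The hypothesis $(k-1)e\ge s_r+n$ is not needed to define $\blam^{+k}$; Lemma \ref{Uglov} already does that. It is used, through Lemma \ref{right}, to get Lemma \ref{no+k}: $\blam^{+k}$ has no addable $\alpha$-nodes and no removable $(\alpha+1)$-nodes.
\begin{itemize}
\item Regularity (Proposition \ref{PlusRegular2}) is proved by induction on $n$, removing a good node. When the residue is $\alpha$, one passes through the reflection $\sigma_{\alpha+1}$ using Lemma \ref{Kleequ}, which applies because of Lemma \ref{no+k}.
\item For canonical bases, Proposition \ref{diagram commute2} gives $\varTheta^{(k)}\circ F_i^{(a)}={}^\alpha\tilde F_i^{(a)}\circ\varTheta^{(k)}$, with ${}^\alpha\tilde F_\alpha^{(a)}=\tilde F_\alpha^{(a)}\tilde F_{\alpha+1}^{(a)}$.
\item Hence $\varTheta^{(k)}$ sends $F_{i_1}^{(a_1)}\cdots F_{i_m}^{(a_m)}\bvarnothing$ to a bar-invariant vector, because $s_{\bvarnothing^{+k}}$ lies in a weight-zero, hence simple, block.
\item An induction along the lexicographic order, which $\blam\mapsto\blam^{+k}$ preserves by Lemma \ref{5.10}, shows that $\varTheta^{(k)}(G^\bs_e(\bmu))$ is bar-invariant and unitriangular. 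So it equals $G^{\bs^{+k}}_{e+1}(\bmu^{+k})$.
\end{itemize}
No comparison of bar involutions on the full Fock space and no dominance compatibility is needed. If you cite Propositions \ref{PlusRegular2} and \ref{KeyCanonicalBases2} directly and drop the Uglov-straightening justification, your Step 3 is a complete proof.
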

Once again, we refer the readers to Section 2 and Section 3 for unexplained notations here. The assumption that $(k-1)e \ge s_r+n$ is not a big deal to us because the cyclotomic Hecke algebra $\H_{r,n}^{\Lam_\bs}$ depends only on $e$ and the multi-set $\{s_j+e\Z\mid 1\leq j\leq r\}$ but not on the multi-set $\{s_j\in\Z\mid 1\leq j\leq r\}$ and we can always replace certain $s_j$ by $s_j+te$ for some $1\leq j\leq r$ and $t\in\Z$ so that $\bs=(s_1,\cdots,s_r)$ satisfies that $\bs\in \overline{\mathcal A}^r_e$ and $(k-1)e \ge s_r+n$ without changing the algebra $\H_{r,n}^{\Lam_\bs}$ and the modules $S^\blam, D^\bmu$. We also note that there are some other generalizations of James-Mathas's and Fayers's runner removal theorems to $\H_{r,n}$ in the literatures, say, \cite{DellA24b}, \cite{DP2026}, \cite{Q}. However, their definitions of adding empty runners or full runners are directly on the $e$-abaci of each component partition of the given $r$-partitions, which do not use the Uglov map and hence is different with our definitions of adding empty runners or full runners.

The content of the paper is organised as follows. In Section 2 we collect some preliminary results on the combinatorics of $r$-partitions and their abacus display. In particular, we recall the definitions of $e$-tuple column abacus for partitions and the $(e,\bs)$ row abaci (or $r$-abaci) for $r$-partitions. We introduce the Uglov maps which are maps from the configuration of the abaci display for all the pair $(\blam,\bs)$ to the configuration of the abaci display for all the pair $(\lam,s)$, where $\bs\in\Z^r, s\in\Z$, and $\blam, \lam$ are $r$-partition and partition respectively. In Section 3 we first introduce the definition of adding empty runner on abacus of $r$-partitions via Uglov's map, then we recall some basic result on Fock space over quantum affine algebras and the Ariki-Lascoux-Leclerc-Thibon theory on $v$-decomposition. In Section 4, we first show that adding an empty runner for $r$-partition send the set of $(e,\bs)$-regular $r$-partitions to $(e+1,\bs)$-regular $r$-partitions.. We then prove the first main result in this paper, namely the empty runner removal theorem for the cyclotomic Hecke algebra \(\H_{r,n}\). In Section 5 we first introduce the definition of adding full runner on abacus of $r$-partitions via Uglov's map, give some characterization on the addable and removable nodes on $L_{\bs^{+k}}(\blam^{+k})$, then we show that adding a full runner for $r$-partition send the set of $(e,\bs)$-regular $r$-partitions to $(e+1,\bs^{+k})$-regular $r$-partitions. Finally, we give the proof of the second main result in this paper, namely the full runner removal theorem for the cyclotomic Hecke algebra \(\H_{r,n}\).

\bigskip
\centerline{Acknowledgements}
\smallskip

The research was supported by the National Natural Science Foundation of China (No. 12431002).
\bigskip

\section{Preliminaries}
In this section, we shall collect some preliminary results on the $r$-partitions and their abaci display.

\subsection{$r$-partitions and abaci}

Let $n$ be a non-negative integer. A {\em partition} $\lam$ of $n$ is a non-increasing sequence of non-negative integers $\lam=(\lam_1,\dots,\lam_s)$ such that $\sum_{i=1}^{s}\lam_i=n$ and we write $|\lam|=n$.
The {\em Young diagram} of a partition $\lam$ is the set of nodes $[\lam]=\{(i,j)\mid i\geq 1, 1\leq j\leq\lam_{i}\}$.We denote by $\mathcal{P}_n$ the set of partitions  of $n$. The integers $\lambda_b$, for $b\ge 1$, are the \textit{parts} of $\lambda$. For any partition $\lambda$, we write $\ell(\lambda)$ for the number of non-zero parts of $\lambda$, and we call it the \textit{length} of $\lambda$.
The {\em conjugate} of $\lam$ is defined to be the partition $\lam'=(\lam'_1,\lam'_2,\cdots)$, where $\lam'_k:=\#\{j\geq 1\mid\lam_j\geq k\}$ for $j=1,2,\cdots$.

%A {\em rim $e$-hook} (or simply
%an $e$-rim) of $[\lam]$ is a connected subset of the rim of $[\lam]$ with exactly $e$ nodes, which can be
%removed from $[\lam]$ to obtain another Young diagram $[\mu]$.
%Given a partition $\lam$,
%unwrapping a rim $e$-hook of $[\lam]$ one by one until none can be unwrapped, one can get a partition, called
%the $e$-{\em core} of $[\lam]$ and the number of rim $e$-hooks unwrapped is called the $e$-{\em weight} of $\lam$.

Let $r\in\Z_{\geq 1}$. An {\em $r$-partition} of $n$ is an $r$-tuple $\blam=(\blam^{(1)}, \dots, \blam^{(r)})$ of partitions such that $|\blam|:=\sum_{i=1}^r|\blam^{(i)}|=n$. The partitions $\blam^{(1)}, \dots,
\blam^{(r)}$ are called components of $\blam$. The conjugate of an $r$-partition $\blam$ is defined to be $\blam'=(\blam^{(r)'}, \dots, \blam^{(1)'})$. For $\sigma\in\mathfrak{S}_r$, where $\mathfrak{S}_r$ is the symmetric group on $\{1, \dots, r\}$, define $\blam^\sigma$ to be $(\blam^{(\sigma(1))}, \dots, \blam^{(\sigma(r))})$.
We denote the set of $r$-partitions of $n$ by $\mathscr{P}_{r,n}$.

We recall some basic definitions and facts about abaci display for partitions and $r$-partitions. By a ``runner'' we mean a horizontal or vertical line labelled by the integers \(\mathbb{Z}\), ordered from left to right (respectively from top to bottom). By an abacus display (or representation) of a subset $B$ of $\Z$ we mean putting a bead at the position $a$ of the runner for each $a\in B$. For the abacus representation of partitions, the fundamental reference is James--Kerber's book \cite{JK}. For the abacus representation of $r$-partitions, we refer the readers to \cite{JL} and \cite{LQ2025}. We always insert a dashed vertical line separating positions \(-1\) and 0, and we generally omit the numbers printed under each position. Given a partition $\lam$ and $s\in \mathbb{Z}$, one can associate it to a set of integers $L_s(\lam)=\{\lam_j-j+s\mid j\in\mathbb{N}^+\}$.
We assume that $\lam$ has an infinite number of zero parts. As is well-known, the set $L_s(\lam)$ can be expressed by a {\em row abacus}.
Let us illustrate an example. \begin{example}
Set \(\lambda=(7,5,4,1,1)\) and \(s=0\). For each \(i\in L_s(\lambda)\), we place a bead on the $i$-th position of the horizontal row abacus. The collection \(L_s(\lambda)\) is shown below. Positions with no bead are termed empty positions.
\[
\begin{tikzpicture}
[scale=0.5, bb/.style={draw,circle,fill,minimum size=2.5mm,inner sep=0pt,outer sep=0pt},
wb/.style={draw,circle,fill=white,minimum size=2.5mm,inner sep=0pt,outer sep=0pt}]
\foreach \x in {13,-10}
\foreach \y in {2}
{
\node at (\x,\y) {$\cdots$};
}	
\node [] at (2,1) {$0$};
\node [] at (3,1) {$1$};
\node [] at (4,1) {$2$};
\node [] at (5,1) {$3$};
\node [] at (7,1) {$\cdots$};
\node [] at (0.9,1) {$-1$};
\node [] at (-0.3,1) {$-2$};
\node [] at (-3,1) {$\cdots$};
\node [wb] at (12,2) {};
\node [wb] at (11,2) {};
\node [wb] at (10,2) {};
\node [wb] at (9,2) {};
\node [bb] at (8,2) {};
\node [wb] at (7,2) {};
\node [wb] at (6,2) {};
\node [bb] at (5,2) {};
\node [wb] at (4,2) {};
\node [bb] at (3,2) {};
\node [wb] at (2,2) {};
\node [wb] at (1,2) {};
\node [wb] at (0,2) {};
\node [bb] at (-1,2) {};
\node [bb] at (-2,2) {};
\node [wb] at (-3,2) {};
\node [bb] at (-4,2) {};
\node [bb] at (-5,2) {};
\node [bb] at (-6,2) {};
\node [bb] at (-7,2) {};
\node [bb] at (-8,2) {};
\node [bb] at (-9,2) {};
\draw[dashed](1.5,1)--node[]{}(1.5,2.5);
\end{tikzpicture}.
\]
Note that the charge $0$ is equal to the number of beads to the right of the dashed line minus the number of empty positions to the left of the dashed line.
\end{example}

Let $1<e<\infty$ be a fixed integer. A row abacus $L_s(\lam)$ can also be represented by an $e$-tuple of column abaci, where the column abaci are indexed \(L_0, L_1, \dots, L_{e-1}\) from left to right. For each $k\in L_s(\lam)$, if $k=ye+x$ with $x, y\in\mathbb{Z}$ and $0\leq x<e$, then place a bead in position $(x, y)$, which means the $y$-th position of $L_x$. We will denote this $e$-tuple column abacus by $\mathcal{L}_s^e(\lam)$. We usually label the runners of the $e$-tuple column abacus of $\mathcal{L}_s^e(\lam)$ as $\rho_0,\rho_1,\cdots,\rho_{e-1}$ from left to right.

\begin{example}
Let $e=3$,  $s=0$ and $\lambda=(7, 5, 4 ,1, 1)$. Then $\mathcal{L}_s^e(\lam)$ is as follows.
\begin{center}
\begin{tikzpicture}[scale=0.5, bb/.style={draw,circle,fill,minimum size=2.5mm,inner sep=0pt,outer sep=0pt}, wb/.style={draw,circle,fill=white,minimum size=2.5mm,inner sep=0pt,outer sep=0pt}]
\foreach \x in {-1,0,1}
\foreach \y in {6,-4.5}
{
\node at (\x,\y) {$\vdots$};
}
\node[bb] at (-1, 5){};
\node[bb] at (-1, 4){};
\node[bb] at (-1, 3){};
\node[bb] at (-1, 2){};
\node[bb] at (-1, 1){};
\node[wb] at (-1, 0){};
\node[bb] at (-1, -1){};
\node[bb] at (-1, -2){};
\node[wb] at (-1, -3){};
\node[wb] at (-1, -4){};

\node[bb] at (0, 5){};
\node[bb] at (0, 4){};
\node[bb] at (0, 3){};
\node[wb] at (0, 2){};
\node[wb] at (0, 1){};
\node[bb] at (0, 0){};
\node[wb] at (0, -1){};
\node[wb] at (0, -2){};
\node[wb] at (0, -3){};
\node[wb] at (0, -4){};

\node[bb] at (1, 5){};
\node[bb] at (1, 4){};
\node[bb] at (1, 3){};
\node[bb] at (1, 2){};
\node[wb] at (1, 1){};
\node[wb] at (1, 0){};
\node[wb] at (1, -1){};
\node[wb] at (1, -2){};
\node[wb] at (1, -3){};
\node[wb] at (1, -4){};

\draw[dashed](-1.5,0.5)--node[]{}(1.5,0.5);
	\end{tikzpicture}
	\end{center}
\end{example}

Any $r$-tuple $\bs=(s_1, s_2, \dots, s_r)\in \mathbb{Z}^r$ is called a \emph{multicharge}. Let $\bs=(s_1, s_2, \dots, s_r)$ be a given multicharge. Let $\blam=(\blam^{(1)}, \dots, \blam^{(r)})$ be any given $r$-partition.
Then the pair $(\blam, \bs)$ can be associated with an $r$-abacus $L_\bs(\blam)$ by setting the row abaci $L_{s_i}(\blam^{(i)})$, $i=1, \dots, r$, from bottom to top so that all positions $0$ of each abacus appear in the same vertical line.  We always insert a dashed vertical line separating positions \(-1\) and $0$. We also call the $r$-abacus $L_{\bs}(\blam)$ the $(e,\bs)$ row abacus (or $(e,\bs)$-abacus for short) of the pair $(\blam, \bs)$. If $r=1$, then the $1$-abacus is nothing but the row abacus of a partition we introduced before.

Conversely, given any $r$-abacus diagram $L$ with a dashed vertical line separating positions \(-1\) and $0$, if, for each \(1 \leq i \leq r\), both the number of beads on \(L_i\) lying to the right of the dashed line and the number of empty positions on \(L_i\) lying to the left of the dashed line are finite, then we may attach a multicharge $\bs=(s_1, s_2, \cdots, s_r)\in\mathbb{Z}^r$ to it as follows:
\begin{equa}\label{Biject1}
s_j:=\begin{pmatrix}\text{the number of beads in $L$ to the}\\
\text{right of the dashed vertical line}\end{pmatrix}-\begin{pmatrix}\text{the number of empty positions beads in $L$}\\
\text{to the left of the dashed vertical line}\end{pmatrix},
\quad\forall\,1\leq j\leq r
\end{equa}
and we can also read off an $r$-partition $\blam$ from $L$, such that $L$ is the $r$-abacus diagram of the pair \((\blam,\bs)\). The above correspondence is a mutual bijection.

\begin{example} Let $e:=3, r:=4, \blam:=(2,1), (3, 2), (1, 1), \varnothing)$, $\bs:=(0, 0, 2, 2)$.
 The $(4,\bs)$-abacus $L_\bs(\blam)$  of $(\blam,\bs)$ is given as follows: \begin{center}
\begin{tikzpicture}[scale=0.5, bb/.style={draw,circle,fill,minimum size=2.5mm,inner sep=0pt,outer sep=0pt}, wb/.style={draw,circle,fill=white,minimum size=2.5mm,inner sep=0pt,outer sep=0pt}]
	
\foreach \x in {10,-9}
\foreach \y in {3,2,1,0}
{
\node at (\x,\y) {$\cdots$};
}	
	\node [wb] at (9,3) {};
	\node [wb] at (8,3) {};
	\node [wb] at (7,3) {};
	\node [wb] at (6,3) {};
	\node [wb] at (5,3) {};
	\node [wb] at (4,3) {};
	\node [wb] at (3,3) {};
	\node [bb] at (2,3) {};
	\node [bb] at (1,3) {};
	\node [bb] at (0,3) {};
	\node [bb] at (-1,3) {};
	\node [bb] at (-2,3) {};
	\node [bb] at (-3,3) {};
	\node [bb] at (-4,3) {};
	\node [bb] at (-5,3) {};
	\node [bb] at (-6,3) {};
	\node [bb] at (-7,3) {};
	\node [bb] at (-8,3) {};

	\node [wb] at (9,2) {};
	\node [wb] at (8,2) {};
	\node [wb] at (7,2) {};
	\node [wb] at (6,2) {};
	\node [wb] at (5,2) {};
	\node [wb] at (4,2) {};
	\node [bb] at (3,2) {};
	\node [bb] at (2,2) {};
	\node [wb] at (1,2) {};
	\node [bb] at (0,2) {};
	\node [bb] at (-1,2) {};
	\node [bb] at (-2,2) {};
	\node [bb] at (-3,2) {};
	\node [bb] at (-4,2) {};
	\node [bb] at (-5,2) {};
	\node [bb] at (-6,2) {};
	\node [bb] at (-7,2) {};
	\node [bb] at (-8,2) {};
	
	\node [wb] at (9,1) {};
	\node [wb] at (8,1) {};
	\node [wb] at (7,1) {};
	\node [wb] at (6,1) {};
	\node [wb] at (5,1) {};
	\node [wb] at (4,1) {};
	\node [bb] at (3,1) {};
	\node [wb] at (2,1) {};
	\node [bb] at (1,1) {};
	\node [wb] at (0,1) {};
	\node [wb] at (-1,1) {};
	\node [bb] at (-2,1) {};
	\node [bb] at (-3,1) {};
	\node [bb] at (-4,1) {};
	\node [bb] at (-5,1) {};
	\node [bb] at (-6,1) {};
	\node [bb] at (-7,1) {};
	\node [bb] at (-8,1) {};
	
	\node [wb] at (9,0) {};
	\node [wb] at (8,0) {};
	\node [wb] at (7,0) {};
	\node [wb] at (6,0) {};
	\node [wb] at (5,0) {};
	\node [wb] at (4,0) {};
	\node [wb] at (3,0) {};
	\node [bb] at (2,0) {};
	\node [wb] at (1,0) {};
	\node [bb] at (0,0) {};
	\node [wb] at (-1,0) {};
	\node [bb] at (-2,0) {};
	\node [bb] at (-3,0) {};
	\node [bb] at (-4,0) {};
	\node [bb] at (-5,0) {};
	\node [bb] at (-6,0) {};
	\node [bb] at (-7,0) {};
	\node [bb] at (-8,0) {};
	
	\draw[](-5.5,-0.5)--node[]{}(-5.5,3.5);
	\draw[](-2.5,-0.5)--node[]{}(-2.5,3.5);
	\draw[dashed](0.5,-0.5)--node[]{}(0.5,3.5);
		\draw[](3.5,-0.5)--node[]{}(3.5,3.5);
	\draw[](6.5,-0.5)--node[]{}(6.5,3.5);
	\end{tikzpicture}.
\end{center}
\end{example}

Let $1\leq i\leq r$ and $j\in\Z$. Denote by $\CIRCLE_j^i(\blam, \bs)$ the $j$-th bead on the abacus $L_{s_i}(\blam^{(i)})$, counted from right to left. If there is no danger of confusion, we write it simply as $\CIRCLE_j^i$.
The following lemma is straightforward to verify.

\begin{lemma}\text{\rm (\cite[Lemm 3.1.8]{LQ2025})}\label{bead minus}
Let $L_\bs(\blam)$ be the $(e,\bs)$-abacus of the pair $(\blam, \bs)$. Let $1\leq i\leq r$ and $j\in\Z$. Then the number of empty positions to the left of $\CIRCLE_j^i$ is equal to $\blam^{(i)}_j$.
\end{lemma}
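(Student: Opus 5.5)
The lemma concerns a single row abacus $L_{s_i}(\blam^{(i)})$, so it reduces to the level one statement for a partition $\lam=\blam^{(i)}$ with charge $s=s_i$. By definition, the beads of $L_s(\lam)$ sit at the positions $\lam_k-k+s$ for $k\in\mathbb{N}^+$. Since $\lam$ is non-increasing, these positions are strictly decreasing in $k$: indeed $\lam_k-k+s>\lam_{k+1}-(k+1)+s$. Hence, counting from right to left, the $j$-th bead $\CIRCLE_j^i$ is exactly the bead at position $b_j:=\lam_j-j+s$. I will read the index $j\in\Z$ in the statement as $j\ge1$, which is the range where $\blam^{(i)}_j$ is defined.

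Next I count the empty positions to the left of $b_j$. The plan is to compare the number of beads to the left of $b_j$ with the total number of positions there, within a finite window. Choose $N\ge \ell(\lam)$ large, with $N>j$. For every $k>N$ we have $\lam_k=0$, so $b_k=s-k$. Thus the positions $s-k$ for $k>N$ are all occupied, and every position to the left of $s-N$ holds a bead. It follows that all empty positions to the left of $b_j$ lie in the interval $[\,s-N,\;b_j-1\,]$.

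This interval contains $b_j-(s-N)=\lam_j-j+N$ positions. The beads in it are exactly $b_{j+1},\dots,b_N$, which are $N-j$ beads. The bead $b_N=\lam_N-N+s=s-N$ belongs to the interval, and by strict monotonicity no other bead does. The number of empty positions is therefore
\[
(\lam_j-j+N)-(N-j)=\lam_j .
\]

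Each step is routine. The only point needing care is the boundary bookkeeping: checking that the left endpoint $s-N$ is itself a bead, and that the count is independent of $N$. This is the step I would verify most carefully. Applying the result to the component $i$ gives $\blam^{(i)}_j$, as claimed.
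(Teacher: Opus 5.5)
Your counting argument is correct and is the routine verification the paper intends; the paper itself gives no proof, only calling the lemma straightforward and citing \cite[Lemma 3.1.8]{LQ2025}. One small slip: the identity $b_N=s-N$ needs $\lambda_N=0$, i.e.\ $N>\ell(\lambda)$, but your count only uses $b_N\ge s-N>s-N-1=b_{N+1}$, which holds for every $N\ge\ell(\lambda)$, so the conclusion is unaffected.
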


The Young diagram of an $r$-partition $\blam$ is the set of nodes $$[\blam]:=\{(i, j, k)\mid i\geq 1, 1\leq j\leq\blam^{(k)}_i, 1\leq k\leq r\}.$$
Let $e\in\Z_{>1}\cup\{0\}$. Given $\bs=(s_1, s_2, \dots, s_r)\in \Z^r$, define the {\em residue of the node} $\gamma=(i, j, k)\in [\blam]$ to be $\res(\gamma):={j-i+s_k}+e\Z\in\Z/e\Z$, and we say $\gamma$ is a $\res(\gamma)$-node. For any $f\in\Z/e\Z$, we define
$c_f(\blam)$ to be the number of $f$-nodes in $[\blam]$ and $\mathcal{C}_{e, \bs}(\blam):=(c_0(\blam), c_1(\blam), \dots, c_{{e-1}}(\blam))$.
Let $\blam\in\PP_{r,n}, i\in I$ and let $\gamma$ be an $i$-node. If $\gamma\notin [\blam]$ and $[\blam]\cup\{\gamma\}$ is the Young diagram of an $r$-partition, then we say $\gamma$ is an addable $i$-node of $\blam$; if $\gamma\in [\blam]$ and $[\blam]\setminus\{\gamma\}$ is the Young diagram of an $r$-partition, then we say $\gamma$ is an removable $i$-node of $\blam$. We use $\add_i\blam$ (resp., $\rem_i\bmu$) to denote the set of addable (resp., removable) $i$-nodes of $\blam$.

\begin{lemma}\text{\rm (\cite[Lemma 3.2.2]{LQ2025})}\label{3.2.2}
Let $\lambda$ be a partition and $s\in\Z$. If $\CIRCLE_i$ is at $(j+ke)$-th position in $L_s(\lam)$, where $0\le j\le e-1$,
then the residue of node $(i, \lam_i)$ is $j$.
\end{lemma}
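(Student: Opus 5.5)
The plan is to prove Lemma \ref{3.2.2} by computing the position of the $i$-th bead directly and comparing it with the content of the node $(i,\lam_i)$.

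First, recall that $L_s(\lam)=\{\lam_j-j+s\mid j\in\mathbb{N}^+\}$. Its elements are strictly decreasing in $j$, because $\lam_j-j+s>\lam_{j+1}-(j+1)+s$. So the $i$-th bead, counted from right to left, is the element with index $j=i$. It therefore sits at position $\lam_i-i+s$. This agrees with Lemma \ref{bead minus}: the number of empty positions to the left of $\CIRCLE_i$ is $\lam_i$.

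Next, by the definition of residue (the level one case with multicharge $s$), the node $(i,\lam_i)$ has residue
\[
\lam_i-i+s+e\Z .
\]
This is exactly the position of $\CIRCLE_i$ reduced modulo $e$.

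Finally, the hypothesis says that this position equals $j+ke$ with $0\le j\le e-1$. Hence the residue is $j+ke+e\Z=j+e\Z$, which we identify with $j\in\{0,\dots,e-1\}$. This is the claim.

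There is no real obstacle. The only point needing care is the convention that beads are counted from the right, so that index $i$ matches the $i$-th part. One should also note the implicit assumption $\lam_i\ge1$ when $(i,\lam_i)$ is meant to be a genuine node. If $\lam_i=0$, the same formula gives the residue of the formal node $(i,0)$, which is what is used for addable nodes.
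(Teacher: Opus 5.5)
Your proof is correct: the $i$-th bead from the right of $L_s(\lam)$ sits at position $\lam_i-i+s$. Under the paper's residue convention this is exactly $\res(i,\lam_i)$, so reducing $j+ke$ modulo $e$ gives $j$. The paper gives no proof of its own here and simply cites \cite[Lemma 3.2.2]{LQ2025}, so your direct computation is the natural argument to supply.

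Your closing aside is slightly off, though it does not affect the proof. The addable node in row $i$ is $(i,\lam_i+1)$, whose residue is one more than the bead's position, as in Lemma \ref{add and rem}. So the formal node $(i,0)$ is not what governs addable nodes.
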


\begin{lemma}\label{add and rem}
An addable $i$-node of $(\boldsymbol{\lambda},\boldsymbol{s})$ corresponds to the configuration of the $(e,\bs)$-abacus $L_{\boldsymbol{s}}(\boldsymbol{\lambda})$ such that position $(a,b-1)$ is occupied by a bead while $(a,b)$ is empty, where $1\le a\le r, b\in\Z$, and $b\equiv i\pmod{e}$. Adding this addable $i$-node corresponds to moving the bead at position \((a,b-1)\) to \((a,b)\) on \(L_{\boldsymbol{s}}(\boldsymbol{\lambda})\). A removable $i$-node of \((\boldsymbol{\lambda},\bs)\) corresponds to the configuration of the $(e,\bs)$-abacus \(L_{\boldsymbol{s}}(\boldsymbol{\lambda})\) such that position \((a,b-1)\) is empty while \((a,b)\) carries a bead, where \(1\le a\le r\), \(b\in\mathbb{Z}\), and \(b\equiv i\pmod{e}\). Removing this removable $i$-node corresponds to moving the bead at position \((a,b)\) to \((a,b-1)\) on \(L_{\boldsymbol{s}}(\boldsymbol{\lambda})\).
\end{lemma}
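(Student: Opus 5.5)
The plan is to reduce everything to a single component, since adding or removing a node of $\blam$ changes only one component $\blam^{(a)}$. That component is recorded on the row $L_{s_a}(\blam^{(a)})$ of the $(e,\bs)$-abacus, so it suffices to treat one partition $\lam=\blam^{(a)}$ with charge $s=s_a$. In this case the $j$-th bead $\CIRCLE_j$ sits at position $\lam_j-j+s$.

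First, I will show that adding a node in row $j$ increases the position of $\CIRCLE_j$ by exactly one and leaves every other bead fixed. The new node is $(j,\lam_j+1)$. It is addable precisely when $j=1$ or $\lam_{j-1}>\lam_j$. The second condition says $\lam_{j-1}-(j-1)+s>\lam_j-j+s+1$, that is, position $\lam_j-j+s+1$ is not occupied by $\CIRCLE_{j-1}$. All other beads lie strictly left of $\CIRCLE_j$ or strictly right of $\CIRCLE_{j-1}$, so this position cannot be occupied by any of them either. Setting $b-1=\lam_j-j+s$, the node is therefore addable if and only if position $(a,b-1)$ carries a bead and $(a,b)$ is empty. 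Adding the node replaces $\lam_j$ by $\lam_j+1$, which moves the bead from $b-1$ to $b$ and fixes all other beads. One can also see this from Lemma~\ref{bead minus}: the number of empty positions to the left of $\CIRCLE_j$ goes up by one and nothing else changes.

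Second, I will check the residue. The new node $(j,\lam_j+1,a)$ has residue $\lam_j+1-j+s_a=b$ modulo $e$. Equivalently, by Lemma~\ref{3.2.2} applied to the bead now at position $b$, this residue is $b \bmod e$. Hence $b\equiv i \pmod e$ exactly when the node is an $i$-node.

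Third, the removable case is the inverse operation. A node $\gamma$ is removable from $\blam$ if and only if it is addable to $[\blam]\setminus\{\gamma\}$. Applying the first two steps to that smaller $r$-partition gives the stated configuration: $(a,b)$ carries a bead, $(a,b-1)$ is empty, and removing the node moves the bead from $b$ to $b-1$.

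There is no real obstacle here. The only point needing care is the bookkeeping that the position $b$ cannot be occupied by some bead other than $\CIRCLE_{j-1}$, which holds because bead positions strictly decrease with the index.
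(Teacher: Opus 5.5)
The paper states this lemma without proof and treats it as the standard bead-moving dictionary behind Lemmas \ref{bead minus} and \ref{3.2.2}. Your argument is a correct verification of it, so there is no proof in the paper to compare against.

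Your argument runs as follows. The $j$-th bead of row $a$ sits at $\lambda^{(a)}_j-j+s_a$, and bead positions strictly decrease in $j$. Hence the position one step to the right of that bead is free exactly when $j=1$ or $\lambda^{(a)}_{j-1}>\lambda^{(a)}_j$. Finally, the new node $(j,\lambda^{(a)}_j+1,a)$ has residue $b$ modulo $e$.

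One point needs tightening, in the removable case. Reducing to $[\blam]\setminus\{\gamma\}$ only shows that a removable node produces the configuration ``$(a,b)$ occupied, $(a,b-1)$ empty.'' For the converse you can argue directly. If the $j$-th bead of row $a$ is at $b$ and $b-1$ is empty, then the $(j+1)$-th bead lies strictly left of $b-1$. That means $\lambda^{(a)}_{j+1}<\lambda^{(a)}_j$, so $(j,\lambda^{(a)}_j,a)$ is removable. Alternatively, note that moving a bead from $b$ to an empty $b-1$ preserves the charge. The result is then the abacus of some $r$-partition, to which your first step applies.
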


Let $A$ be a finite dimensional algebra over a field $K$. A left $A$-module $M$ is said to belong to some block of $A$ if all the composition factors of $M$ belong to that block. For any cellular algebra $A$,
and any cell module $M$ over $A$, according to \cite[(C3),(C3)']{GL}, any central primitive idempotent acts on $M$ by a scalar. It follows that all composition factors of $M$ belong to the same block.
Therefore, we can say in this case a cell module belongs to some block.

Let $1\neq\xi\in K$ be an invertible element which has quantum characteristic $e$. That says, either $e=0$ and $\xi$ is not a root of unity, or $e>1$ and $\xi$ is a primitive $e$-th root of unity in $K^\times$.
Let $\mathfrak{g}:=\hat{\mathfrak{sl}}_e$ if $e>1$; or $\mathfrak{g}:={\mathfrak{sl}}_\infty$ if $e=0$.
Set $\Lam_{\bs}:=\Lam_{\overline{s_1}}+\cdots+\Lam_{\overline{s_r}}$, which is a dominant weight of $\mathfrak{g}$, where $\overline{s_j}:=s_j+e\Z\in\Z/e\Z$ for each $j$. Let $\{\alpha_i\mid i\in I\}$ be the set of simple roots of $\mathfrak{g}$. Let $\H_{r,n}^{\Lam_\bs}$ be the cyclotomic Hecke algebra of type $G(r,1,n)$ with Hecke parameter $\xi$ and cyclotomic parameters $\xi^{s_1},\cdots,\xi^{s_r}$. By definition, $\H_{r,n}^{\Lam_\bs}$ is the unital $K$-algebra generated by $T_0,T_1,\ldots,T_{n-1}$, subject to the following relations: $$\begin{aligned}
&  \prod_{j=1}^r(T_0-\xi^{\kappa_j})=0,\quad (T_a-\xi)(T_a+1)=0,\,\,\forall\, 1\leq a<n,\\
& T_0T_1T_0T_1=T_1T_0T_1T_0, \quad T_aT_b=T_bT_a,\,\,\forall\,0\leq a<b-1<n-1,\\
& T_aT_{a+1}T_a=T_{a+1}T_aT_{a+1},\quad\forall\,1\leq a<n-1 .\end{aligned} $$

By \cite{DJM}, $\H_{r,n}^{\Lam_\bs}$ is a cellular algebra over $K$. By \cite{LM}, the blocks of $\H_{r,n}^{\Lam_\bs}$ are parameterized by some $\alpha\in Q_n^+$, where $Q_n^+$ is the set of elements in the positive root lattice of $\mathfrak{g}$ with height $n$. The Dipper-James-Mathas's Specht module $S^\blam$ of $\H_{r,n}^{\Lam_\bs}$ lies in the block corresponding to $\beta\in Q_n^+$ if and only if $\beta=\sum_{i\in I}c_{i}(\blam)\alpha_i$. In this case, we often abuse notation to say that $\blam$ lies in the block $\mathcal {H}_{\beta}^{\Lambda}$ of $\H_{r,n}^\Lam$. Since the definition of a cell module relies on the chosen $r$-tuple \(\boldsymbol{s}\), we shall write the pair \((\boldsymbol{\lambda}, \boldsymbol{s})\) within each block whenever clarification is needed.

Given two pairs $(\blam, \bs)$ and $(\bmu, \bs)$, the following lemma provides a criterion of being in the same block.

\begin{lemma}\text{\rm (\cite[Theorem 2.11]{LM})}\label{residueblock}
Pairs $(\blam, \bs)$ and $(\bmu, \bs)$ belong to the same block if and only of $c_f(\blam)=c_f(\bmu)$ for all $f\in K$.
\end{lemma}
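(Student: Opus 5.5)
The statement is the block classification of Lyle and Mathas, \cite[Theorem 2.11]{LM}. I would prove the two implications separately. The forward implication, that being in the same block forces the residue contents $\mathcal C_{e,\bs}$ to agree, comes from central elements built out of Jucys--Murphy elements. The converse needs a genuine linkage argument.

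For the forward direction, let $L_1,\dots,L_n$ be the Jucys--Murphy elements of $\H_{r,n}^{\Lam_\bs}$, namely $L_1=T_0$ and $L_{k+1}=\xi^{-1}T_kL_kT_k$.
\begin{enumerate}
\item By \cite{DJM} (via the Murphy basis), $S^\blam$ has a basis indexed by standard $\blam$-tableaux $\mathfrak t$. In this basis each $L_k$ acts triangularly, with diagonal entry $\xi^{j-i+s_c}$ when $k$ occupies node $(i,j,c)$ of $\mathfrak t$.
\item The symmetric polynomials in $L_1,\dots,L_n$ are central.
\item Hence every $f(L_1,\dots,L_n)$ with $f$ symmetric acts on the cell module $S^\blam$ by the scalar $f(\xi^{c_1},\dots,\xi^{c_n})$, where $c_1,\dots,c_n$ are the contents of the nodes of $[\blam]$.
\item Since $\xi$ has quantum characteristic $e$, the multiset $\{\xi^{c_m}\}$ determines, and is determined by, the multiset of residues $\{c_m+e\Z\}$.
\item Taking generalized eigenspaces of the commuting family $L_1,\dots,L_n$ and grouping them by residue multiset gives orthogonal central idempotents $e_\beta$, for $\beta\in Q_n^+$, summing to $1$.
\item The idempotent $e_\beta$ acts as the identity on $S^\blam$ exactly when $\beta=\sum_f c_f(\blam)\alpha_f$, and as zero otherwise.
\end{enumerate}
Any block idempotent therefore lies under a single $e_\beta$, and pairs in the same block have equal $c_f$.

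For the converse I must show that each nonzero $e_\beta\H_{r,n}^{\Lam_\bs}$ is indecomposable. Since the algebra is cellular, the Specht modules $S^\blam$ with content $\beta$ have composition factors covering all simples of $e_\beta\H$. It therefore suffices to connect any two $r$-partitions of content $\beta$ by a chain in which consecutive Specht modules share a composition factor.

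The tool I would use is the Jantzen sum formula for Specht modules (James--Mathas). It expresses $\sum_{i>0}[S^\blam(i)]$ as a signed sum of $[S^\bnu]$, where $\bnu$ runs over $r$-partitions obtained from $\blam$ by unwrapping a rim hook and wrapping another of the same residue. In the abacus language of Lemma \ref{add and rem}, these moves slide beads by multiples of $e$ on a runner, or exchange beads between rows whose positions are congruent modulo $e$.

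I would run an induction on the dominance order. Fix $\blam$ of content $\beta$ that is not minimal among the $r$-partitions of content $\beta$. I would show that the Jantzen sum for $\blam$, or for some suitable $\bmu$ of content $\beta$, has a nonzero term $[S^\bnu]$ surviving the cancellation, with $\bnu\lhd\blam$ of the same content. This links $\blam$ downward, and iterating links every $r$-partition of content $\beta$ to a common minimal one.

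The main obstacle is that step: controlling the signs in the Jantzen sum so that some term provably does not cancel, together with the combinatorial fact that any two $r$-partitions of equal content are connected by such hook moves. What I would try first is to reduce to the case where $\blam$ and $\bmu$ differ by a single elementary move of one bead by $e$ positions, or by one swap of congruent beads between two rows. In that local situation the Jantzen sum has an extremal term whose coefficient is $\pm1$, and it is the most dominant term, so it cannot cancel.

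As an alternative route that avoids the sign analysis, I would use the Brundan--Kleshchev isomorphism with the cyclotomic KLR algebra $R^{\Lam_\bs}_\beta$. Indecomposability then follows from the connectedness of the crystal weight space, that is, from irreducibility of the $\beta$-weight space of $V(\Lam_\bs)$ under $e_i$, $f_i$ in the Grothendieck group. In either route, this connectivity argument is where the real work lies.
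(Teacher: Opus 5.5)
The paper gives no argument for this lemma; it simply quotes \cite[Theorem 2.11]{LM}. Your forward implication (same block implies the same residue multiset) is the standard central-character argument, using symmetric polynomials in the Jucys--Murphy elements and their generalised-eigenspace idempotents $e_\beta$, and it is fine. The gap is the converse. That is the entire content of the Lyle--Mathas theorem, and your proposal leaves it open; you say so yourself.

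Beyond being unfinished, the route you sketch does not work as stated. You run the Jantzen sum formula on Specht modules of $\H_{r,n}^{\Lam_\bs}$. But the classes $[S^\bnu]$ are linearly dependent in the Grothendieck group, whose rank is only the number of $(e,\bs)$-regular $r$-partitions. So a nonzero coefficient of $[S^\bnu]$ in $\sum_{i>0}[S^\blam(i)]=\sum_\bnu c_{\blam\bnu}[S^\bnu]$ does not show that $S^\bnu$ and $S^\blam$ share a composition factor. An extremal term yields a linkage only if it is labelled by an $(e,\bs)$-regular $r$-partition, and nothing in your argument arranges this. This is why Lyle and Mathas work instead with Weyl modules $W^\bnu$ of the cyclotomic $q$-Schur algebra. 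There the decomposition matrix is square and unitriangular, the $[W^\bnu]$ form a basis, and every nonvanishing Jantzen coefficient is a genuine linkage. The answer must then be transferred back to $\H_{r,n}^{\Lam_\bs}$ through the Schur functor, a step your plan also omits.

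Even in the Schur algebra, the key combinatorial statement is exactly what has to be proved. That statement is that any two $r$-partitions with the same residues are joined by a chain of nonvanishing Jantzen coefficients; Lyle--Mathas obtain it by analysing ``Jantzen equivalence''. Your proposed local reduction is false:
\begin{enumerate}
\item Sliding a single bead by $e$ changes $|\blam|$, so an elementary content-preserving move must slide two beads.
\item For such a move, the target need not appear in the Jantzen sum. Take $r=1$, $e=2$, charge $0$, $\blam=(3,2)$ and $\bmu=(5)$. On the $2$-abacus one bead moves down and one moves up a single runner ($2\mapsto 4$ and $0\mapsto -2$), and $(5)$ is the most dominant partition of the block. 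Yet $[W^{(5)}]$ receives exactly two contributions, from pairs of rim hooks of lengths $2$ and $4$, and these cancel.
\item Indeed $G^{(0)}_2\bigl((5)\bigr)=s_{(5)}+v\,s_{(3,1^2)}+v^2 s_{(1^5)}$. Hence $W^{(3,2)}$ is irreducible, its Jantzen sum is zero, and the linkage of $(3,2)$ with $(5)$ is realised only through $(3,1^2)$.
\end{enumerate}
Finally, the KLR alternative is not an argument. The operators $e_i,f_i$ do not act on a single weight space, and connectedness of the crystal does not imply that $R^{\Lam_\bs}_\beta$ is indecomposable without independent information on its centre. In type $A$ that indecomposability is normally deduced from Lyle--Mathas via Brundan--Kleshchev, not the other way round.
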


Based on Lemmas \ref{residueblock}, for any $\sigma\in \mathfrak{S}_r$, pairs $(\blam^\sigma, \bs^{\sigma})$ and $(\blam, \bs)$ are in the same block.

\smallskip
\subsection{Uglov maps}

\begin{definition}\text{\rm (\cite[Section 4.1]{U}, \cite[Section 2.4]{JL})}\label{Uglovmap}
Let $\bs=(s_1, s_2, \dots, s_r)\in \mathbb{Z}^r$ be a multicharge. Let $\blam$ be an $r$-partition. Then the image of pair $(\blam, \bs)$ under Uglov map $\tau_{e, r}$ is $(\lam, s)$, where $s:=\sum_{j=1}^r s_j$,
which is defined as follows. For each bead at position $(x, y)$ in $L_\bs(\blam)$, where $x\in\{1,2,\cdots,r\}$, let $y = k.e + c$ with $k \in \Z$ and $c \in \{0, \dots, e-1\}$. Then we set a bead in the new $1$-abacus $L_s(\lam)$ at position $(r-x)e+ker +c$.
\end{definition}

Using the $e$-tuple column abacus display for partitions, we can give an intuitive description of the Uglov map $\tau_{e, r}$ as follows. Partition the $r$-abacus \(L_\bs(\lambda)\) into segments consisting of positions \(\{ae, ae+1, \dots, ae+e-1\}\) for each integer $a$. For every such segment indexed by $a$, place it above the segment corresponding to \((a+1)e\), which occupies positions \(\{(a+1)e, (a+1)e+1, \dots, (a+1)e+e-1\}\). The resulting configuration is the $e$-tuple column abacus diagram for \(\tau_{e,r}\big(L_s(\lambda)\bigr)\). We provide an illustrative example below.

\begin{example}
Let $e=3$, $r=4$, $\bs=(0,0,2,2)$ and $\blam=((2), (3,1),(1,1),\emptyset)$. Then the $(4,\bs)$-abacus $L_\bs(\blam)$  of the pair $(\blam,\bs)$ is as follows.
\begin{center}
\begin{tikzpicture}[scale=0.5, bb/.style={draw,circle,fill,minimum size=2.5mm,inner sep=0pt,outer sep=0pt}, wb/.style={draw,circle,fill=white,minimum size=2.5mm,inner sep=0pt,outer sep=0pt}]
	
\foreach \x in {10,-9}
\foreach \y in {3,2,1,0}
{
\node at (\x,\y) {$\cdots$};
}	
	\node [wb] at (9,3) {};
	\node [wb] at (8,3) {};
	\node [wb] at (7,3) {};
	\node [wb] at (6,3) {};
	\node [wb] at (5,3) {};
	\node [wb] at (4,3) {};
	\node [wb] at (3,3) {};
	\node [bb] at (2,3) {};
	\node [bb] at (1,3) {};
	\node [bb] at (0,3) {};
	\node [bb] at (-1,3) {};
	\node [bb] at (-2,3) {};
	\node [bb] at (-3,3) {};
	\node [bb] at (-4,3) {};
	\node [bb] at (-5,3) {};
	\node [bb] at (-6,3) {};
	\node [bb] at (-7,3) {};
	\node [bb] at (-8,3) {};

	\node [wb] at (9,2) {};
	\node [wb] at (8,2) {};
	\node [wb] at (7,2) {};
	\node [wb] at (6,2) {};
	\node [wb] at (5,2) {};
	\node [wb] at (4,2) {};
	\node [bb] at (3,2) {};
	\node [bb] at (2,2) {};
	\node [wb] at (1,2) {};
	\node [bb] at (0,2) {};
	\node [bb] at (-1,2) {};
	\node [bb] at (-2,2) {};
	\node [bb] at (-3,2) {};
	\node [bb] at (-4,2) {};
	\node [bb] at (-5,2) {};
	\node [bb] at (-6,2) {};
	\node [bb] at (-7,2) {};
	\node [bb] at (-8,2) {};
	
	\node [wb] at (9,1) {};
	\node [wb] at (8,1) {};
	\node [wb] at (7,1) {};
	\node [wb] at (6,1) {};
	\node [wb] at (5,1) {};
	\node [wb] at (4,1) {};
	\node [bb] at (3,1) {};
	\node [wb] at (2,1) {};
	\node [wb] at (1,1) {};
	\node [bb] at (0,1) {};
	\node [wb] at (-1,1) {};
	\node [bb] at (-2,1) {};
	\node [bb] at (-3,1) {};
	\node [bb] at (-4,1) {};
	\node [bb] at (-5,1) {};
	\node [bb] at (-6,1) {};
	\node [bb] at (-7,1) {};
	\node [bb] at (-8,1) {};
	
	\node [wb] at (9,0) {};
	\node [wb] at (8,0) {};
	\node [wb] at (7,0) {};
	\node [wb] at (6,0) {};
	\node [wb] at (5,0) {};
	\node [wb] at (4,0) {};
	\node [wb] at (3,0) {};
	\node [bb] at (2,0) {};
	\node [wb] at (1,0) {};
	\node [wb] at (0,0) {};
	\node [bb] at (-1,0) {};
	\node [bb] at (-2,0) {};
	\node [bb] at (-3,0) {};
	\node [bb] at (-4,0) {};
	\node [bb] at (-5,0) {};
	\node [bb] at (-6,0) {};
	\node [bb] at (-7,0) {};
	\node [bb] at (-8,0) {};
	
	\draw[](-5.5,-0.5)--node[]{}(-5.5,3.5);
	\draw[](-2.5,-0.5)--node[]{}(-2.5,3.5);
	\draw[dashed](0.5,-0.5)--node[]{}(0.5,3.5);
		\draw[](3.5,-0.5)--node[]{}(3.5,3.5);
	\draw[](6.5,-0.5)--node[]{}(6.5,3.5);
	\end{tikzpicture}
\end{center}
The $e$-tuple abacus of $\tau_{e, r}(L_{\bs}(\blam))$ is
\begin{center}
\begin{tikzpicture}[scale=0.5, bb/.style={draw,circle,fill,minimum size=2.5mm,inner sep=0pt,outer sep=0pt}, wb/.style={draw,circle,fill=white,minimum size=2.5mm,inner sep=0pt,outer sep=0pt}]
\foreach \x in {-1,0,1}
\foreach \y in {6,-4.5}
{
\node at (\x,\y) {$\vdots$};
}
\node[bb] at (-1, 5){};
\node[bb] at (-1, 4){};
\node[bb] at (-1, 3){};
\node[bb] at (-1, 2){};
\node[bb] at (-1, 0){};
\node[bb] at (-1, 1){};
\node[wb] at (-1, -1){};
\node[wb] at (-1, -2){};
\node[wb] at (-1, -3){};
\node[wb] at (-1, -4){};

\node[bb] at (0, 5){};
\node[bb] at (0, 4){};
\node[bb] at (0, 3){};
\node[wb] at (0, 2){};
\node[bb] at (0, 1){};
\node[bb] at (0, 0){};
\node[bb] at (0, -1){};
\node[wb] at (0, -2){};
\node[bb] at (0, -3){};
\node[wb] at (0, -4){};

\node[bb] at (1, 5){};
\node[bb] at (1, 4){};
\node[bb] at (1, 3){};
\node[bb] at (1, 2){};
\node[wb] at (1, 1){};
\node[wb] at (1, 0){};
\node[bb] at (1, -1){};
\node[bb] at (1, -2){};
\node[wb] at (1, -3){};
\node[wb] at (1, -4){};

\draw[dashed](-1.5,0.5)--node[]{}(1.5,0.5);
	\end{tikzpicture}
	\end{center}
\end{example}

\begin{lemma}\label{Uglov} The Uglov map \(\tau_{e,r}\) gives a bijection between the $r$-abacus diagram set \(\bigsqcup_{\substack{\bs\in\mathbb{Z}^r\\ \blam\in\mathscr{P}_{r,n},\, n\in\mathbb{N}}}L_{\bs}(\blam)\)
associated with $r$-partitions and its counterpart \(\bigsqcup_{\substack{s\in\mathbb{Z},\, n\in\mathbb{N}\\ \mu\in\mathcal{P}_n}}L_s(\mu)\) for ordinary partitions.
\end{lemma}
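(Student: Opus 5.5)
The plan is to show that $\tau_{e,r}$ is induced by a bijection of positions, and then check that it respects the finiteness conditions that single out abacus diagrams of (multi)partitions.

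\emph{Step 1: a bijection on positions.} Set
\[
\phi:\{1,\dots,r\}\times\Z\to\Z,\qquad \phi(x,\,ke+c)=(r-x)e+ker+c,
\]
where $k\in\Z$ and $0\le c\le e-1$. To see that $\phi$ is bijective, take any $N\in\Z$ and write $N=qe+c$ with $0\le c<e$. Then write $q=kr+(r-x)$ with $1\le x\le r$, which is division of $q$ by $r$ with remainder $r-x\in\{0,\dots,r-1\}$. This gives the unique preimage $(x,ke+c)$ of $N$. By Definition \ref{Uglovmap}, $\tau_{e,r}$ sends the bead configuration $B\subseteq\{1,\dots,r\}\times\Z$ of $L_\bs(\blam)$ to $\phi(B)\subseteq\Z$. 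So $\tau_{e,r}$ is injective on arbitrary bead configurations, with inverse $\phi^{-1}$. What remains is to check that the admissible configurations on each side correspond.

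\emph{Step 2: admissibility is preserved both ways.} By the bijection around (\ref{Biject1}), and its $r=1$ case, a subset $B\subseteq\{1,\dots,r\}\times\Z$ is $L_\bs(\blam)$ for some $\bs$ and $\blam$ exactly when, on each row $x$, $B$ has only finitely many beads at positions $\ge0$ and only finitely many gaps at positions $<0$. Equivalently, there is an $M$ such that every row is full to the left of $-M$ and empty to the right of $M$. Likewise $C\subseteq\Z$ is some $L_s(\mu)$ exactly when $C$ is full far to the left and empty far to the right.

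The key estimate is that $\phi(x,y)$ and $ry$ differ by a bounded amount. For $y=ke+c$,
\[
\phi(x,y)-ry=(r-x)e-(r-1)c,
\]
and this lies in a fixed finite interval independent of $x$ and $y$. Hence $\phi$ sends $\{y\le -M\}$ into a set containing all integers $\le -rM+O(1)$, and sends $\{y\ge M\}$ onto the integers $\ge rM+O(1)$. The same estimate run through $\phi^{-1}$ gives the converse. It follows that $B$ is admissible if and only if $\phi(B)$ is. So $\tau_{e,r}$ restricts to a bijection between the two disjoint unions in Lemma \ref{Uglov}.

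\emph{Step 3: the charge.} Definition \ref{Uglovmap} states that the image carries charge $s=\sum_j s_j$. For the statement of Lemma \ref{Uglov} this is not strictly needed, because the charge is determined by the image set via (\ref{Biject1}). Still, it is worth confirming so the labelling is consistent.

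Fix an $M$ as in Step 2 and compare $B$ with the vacuum configuration, in which row $x$ has beads exactly at $y<0$. The vacuum maps under $\phi$ to $\Z_{<0}$: indeed $y<0$ means $k<0$, so $q=kr+(r-x)<0$ and hence $N=\phi(x,y)<0$, and since $\phi$ is a bijection every negative $N$ arises this way. The charge of a row abacus equals the number of beads at positions $\ge0$ minus the number of gaps at positions $<0$. Because $\phi$ maps $\{y\ge0\}$ bijectively onto $\Z_{\ge0}$ and $\{y<0\}$ onto $\Z_{<0}$, beads and gaps are counted identically on both sides. This gives $s=\sum_j s_j$.

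\emph{Expected main obstacle.} The only delicate point is Step 2, specifically the claim that the finiteness conditions transfer in both directions. The bounded-distortion estimate $\phi(x,y)-ry=O(1)$ handles it cleanly, so I expect no real difficulty.
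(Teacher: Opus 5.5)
Your argument is correct. The paper dismisses this lemma with ``This is clear,'' and your verification (bijectivity of the position map $\phi$, transfer of the finiteness conditions, and the charge count) is exactly the reasoning left implicit there.

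One simplification: the fact you prove in Step 3 already gives Step 2 immediately. That fact is that $\phi$ maps $\{1,\dots,r\}\times\Z_{\ge 0}$ bijectively onto $\Z_{\ge 0}$ and $\{1,\dots,r\}\times\Z_{<0}$ bijectively onto $\Z_{<0}$. Under these bijections, finitely many beads at nonnegative positions and finitely many gaps at negative positions correspond exactly on the two sides. So the bounded-distortion estimate is not needed. That estimate's wording, that $\{y\ge M\}$ maps onto the integers $\ge rM+O(1)$, is also slightly loose, since the image is only sandwiched between two half-lines.
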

\begin{proof} This is clear.
\end{proof}
Note that the Uglov map defined above does not induce a bijection between the set of $r$-partitions and the set of partitions. For example, take $e=3$, $r=2$, then $\tau_{2,3}$ sends $\bigl((\emptyset,\emptyset), (0,3)\bigr)$ to $(\emptyset,3)$, and sends $\bigl((\emptyset,\emptyset), (2,1)\bigr)$ to
$((2^2),3)$, while $\emptyset\neq (2^2)$.

Following \cite{JL}, we introduce the following two subsets of $\mathbb{Z}^r$. $$\begin{aligned}
\overline{\mathcal A}^r_e:&=\{ (s_1, \dots, s_r) \in \mathbb{Z}^r \mid \forall\, i,\, j\in \{1,\dots,r\}, i<j,\, 0\le s_j-s_i\le e\},\\
{\mathcal A}^r_e:&=\{(s_1,\dots,s_r)\in \mathbb{Z}^r \mid \forall\, i,\, j\in \{1,\dots,r\}, i<j, \,0\le s_j-s_i<e\}.\end{aligned}
$$
Clearly, ${\mathcal A}^r_e\subset\overline{\mathcal A}^r_e$. Let $\blam\in\mathscr{P}_{r,n}$. We define
$$
\widetilde{\mathcal A}^r_e (\blam):=\overline{\mathcal A}^r_e \cap \{(s_1, \cdots, s_r) \in \mathbb Z^r \mid \forall j \in \{1, \cdots, r\}, s_j \ge \ell (\lambda^{(j)})\}
$$

\begin{lemma}\label{s_jge}
For any $1\le j\le r$, $s_j \ge \ell(\lambda^{(j)})$ if and only if all positions on $L_\bs (\blam)$ to the left of the dashed line are occupied by beads, which is equivalent to the statement that all positions above the dashed line on $\tau_{e, r}(L_\bs(\blam))$ are occupied by beads.
%$L_\bs (\blam)$           ?      , which is equivalent to $\tau_{e, r}(L_\bs(\blam))$         ?         .
\end{lemma}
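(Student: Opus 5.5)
The statement has two equivalences, and I will prove them in turn.

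For the first, I work one row at a time. Fix $j$ and set $\lam:=\blam^{(j)}$ and $s:=s_j$. The beads of $L_{s}(\lam)$ sit at the positions $\lam_b-b+s$ for $b\ge 1$. Let $\ell:=\ell(\lam)$. For $b>\ell$ we have $\lam_b=0$, so these beads occupy exactly the positions $s-\ell-1, s-\ell-2,\dots$. This is a consecutive run going down to $-\infty$.

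The beads with $b\le\ell$ satisfy $\lam_b\ge 1$. Their positions $\lam_b-b+s$ are at least $s-\ell+1$ when $b=\ell$, and they strictly increase as $b$ decreases. So the position $s-\ell$ is empty. It is in fact the largest empty position below all the beads with index $b\le \ell$: every position $<s-\ell$ is occupied and $s-\ell$ itself is empty. Therefore every position $\le -1$ is occupied if and only if $-1<s-\ell$, that is, if and only if $s\ge\ell$.

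The case $\ell=0$ needs its own check. There the beads are exactly the positions $<s$, and the condition becomes $s\ge 0$, which is consistent with the general claim. Taking the conjunction over all $j$ gives the first equivalence. One could equally argue through Lemma \ref{bead minus}: the bead $\CIRCLE^j_{\ell+1}$ has no empty position to its left.

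For the second equivalence I use Definition \ref{Uglovmap}. A position $y=ke+c$ in row $x$ goes to $(r-x)e+ker+c$. In the $e$-tuple column abacus this lies on runner $c$ at level $kr+(r-x)$.

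A position lies above the dashed line in $\tau_{e,r}(L_\bs(\blam))$ exactly when it is negative. Now $(r-x)e+ker+c<0$ if and only if $kr+(r-x)<0$, because $0\le c<e$. Since $0\le r-x\le r-1$, this holds if and only if $k<0$, which is the same as $y<0$.

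So $\tau_{e,r}$ maps the set of positions to the left of the dashed line in $L_\bs(\blam)$ bijectively onto the set of positions above the dashed line. The map is a bijection of positions $\{1,\dots,r\}\times\Z\to\Z$ because $(x,k,c)\mapsto kr+(r-x)$ together with $c$ is bijective. It also carries beads to beads and empty positions to empty positions. Hence all positions to the left are occupied if and only if all positions above are occupied.

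The only real care needed is the bijectivity and sign bookkeeping in the Uglov positions, which reduces to the inequality $0\le r-x<r$. Neither step is a serious obstacle.
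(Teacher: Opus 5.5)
Your proof is correct. The paper gives no proof of this lemma; it is stated as evident, much as the bijectivity of the Uglov map is dismissed with ``This is clear''. Your two steps are exactly the routine verification left implicit there.
\begin{enumerate}
\item[(1)] On each row $L_{s_j}(\blam^{(j)})$ the leftmost empty position is $s_j-\ell(\blam^{(j)})$.
\item[(2)] The map $\tau_{e,r}$ is a bead-preserving bijection of positions $\{1,\dots,r\}\times\Z\to\Z$. It sends the positions $y<0$ exactly onto the negative integers, because $0\le r-x\le r-1$.
\end{enumerate}

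One small wording slip: $s-\ell$ is the \emph{smallest} (leftmost) empty position of $L_s(\lam)$, not the ``largest'' empty position below the beads with index $b\le\ell$. What you actually use is correct: every position $<s-\ell$ is occupied and $s-\ell$ itself is empty.
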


\begin{lemma}\label{adjustCharge} Suppose the $r$-partitions \(\boldsymbol{\lambda}\) and \(\boldsymbol{\mu}\) lie in the same block. Then there always exists some \(\boldsymbol{s} \in \widetilde{\mathcal A}^r_e (\boldsymbol{\lambda})\) such that \(\boldsymbol{s} \in \widetilde{\mathcal A}^r_e (\boldsymbol{\mu})\).
%$\blam, \bmu$     ??    .  ?    $\bs \in \widetilde{\mathcal A}^r_e (\blam)$, ?   $\bs \in \widetilde{\mathcal A}^r_e (\bmu)$.
\end{lemma}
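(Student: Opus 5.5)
The idea is to build the required multicharge directly from the residues $\overline{s_1},\dots,\overline{s_r}$. These residues determine the algebra $\H_{r,n}^{\Lam_\bs}$, and the residue of every node of $\blam$ and of $\bmu$ depends only on them. We choose integer lifts that are sorted, differ pairwise by less than $e$, and are all large. I read the statement the way the paper uses it in the introduction: the block is one of a fixed algebra $\H_{r,n}^{\Lam_\bs}$, and we may replace $s_j$ by $s_j+te$ and, where necessary, reindex the components simultaneously for $\blam$ and $\bmu$ by some $\sigma\in\Sym_r$.

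First, write $\overline{s_j}=t_j+e\Z$ with $0\le t_j\le e-1$. Choose $\sigma\in\Sym_r$ so that $t_{\sigma(1)}\le t_{\sigma(2)}\le\cdots\le t_{\sigma(r)}$, and replace $(\blam,\bs)$ and $(\bmu,\bs)$ by $(\blam^\sigma,\bs^\sigma)$ and $(\bmu^\sigma,\bs^\sigma)$. By the remark after Lemma \ref{residueblock}, each new pair lies in the same block as the old one. Since the same $\sigma$ is applied to both pairs, the multiset of node residues of each $r$-partition is unchanged. Hence $\blam^\sigma$ and $\bmu^\sigma$ still satisfy $c_f(\blam^\sigma)=c_f(\bmu^\sigma)$ for all $f$, i.e.\ they lie in a common block. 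So we may assume $0\le t_1\le\cdots\le t_r\le e-1$.

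Next, fix an integer $N$ with $Ne\ge n$ and set $s_j':=t_j+Ne$. Then $s_j'\equiv s_j \pmod e$, so $\Lam_{\bs'}=\Lam_{\bs}$ and the algebra is unchanged. The residue $j-i+s_k+e\Z$ of every node is also unchanged. Hence $\mathcal C_{e,\bs'}(\blam)=\mathcal C_{e,\bs}(\blam)$ and likewise for $\bmu$, and by Lemma \ref{residueblock} the pairs $(\blam,\bs')$ and $(\bmu,\bs')$ still lie in the same block.

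It remains to check the two defining conditions. For $i<j$ we have $0\le s'_j-s'_i=t_j-t_i\le e-1<e$, so $\bs'\in\mathcal A^r_e\subset\overline{\mathcal A}^r_e$. For every $j$ we have $s'_j\ge Ne\ge n\ge \ell(\blam^{(j)})$, and similarly $s'_j\ge\ell(\bmu^{(j)})$, because $|\blam|=|\bmu|=n$. Therefore $\bs'\in\widetilde{\mathcal A}^r_e(\blam)\cap\widetilde{\mathcal A}^r_e(\bmu)$. Equivalently, by Lemma \ref{s_jge}, all positions to the left of the dashed line in both $L_{\bs'}(\blam)$ and $L_{\bs'}(\bmu)$ are filled.

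The only delicate point is the bookkeeping in the first step. The permutation must be applied simultaneously to $\blam$ and $\bmu$, and one must observe that this preserves the common-block property, which holds because residues are attached componentwise together with their charges. Apart from that, the argument consists of the sorting trick and a uniform shift by a multiple of $e$.
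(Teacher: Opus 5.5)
Your argument is correct under the reading you state, but it follows a different route from the paper's.

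The paper only replaces $\bs$ by $(s_1+k,\dots,s_r+k)$ with $k\gg 0$. A uniform shift preserves every difference $s_j-s_i$, so the result stays in $\overline{\mathcal A}^r_e$ provided $\bs$ was already there, which is the standing convention wherever the lemma is used. It moves all residues by the same amount, so the same-block relation survives. Since a block is finite, one $k$ serves every $r$-partition in it.

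You instead rebuild $\bs$ from its residues: you sort them by some $\sigma$ and lift to $t_j+Ne$. This gains three things.
\begin{enumerate}
\item It works for an arbitrary starting $\bs$, where some reindexing is genuinely unavoidable. For $e=3$ and residues $(0,2,1)$ in this order, both consecutive gaps of a lift in $\overline{\mathcal A}^3_3$ are forced to equal $2$, so $s_3-s_1=4>e$. Hence shifting individual $s_j$ by multiples of $e$ cannot suffice in general.
\item Shifting by multiples of $e$ keeps $\Lam_\bs$ and the algebra literally unchanged. The paper's $k$ would have to lie in $e\Z$ for that.
\item The bound $Ne\ge n$ is explicit and holds for all of $\PP_{r,n}$ at once.
\end{enumerate}

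The cost is that your conclusion concerns $\blam^\sigma,\bmu^\sigma$, not $\blam,\bmu$, and this matters for the rest of the paper. The order $\succ$, and hence $(e,\bs)$-regularity, the Fock space action and $G^{\bs}_e(\bmu)$, all depend on the ordering of the components. Already for $e=2$ and $\bs=(0,0)$, the bipartition $((1),\varnothing)$ is $(e,\bs)$-regular while $(\varnothing,(1))$ is not.

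You also sort unconditionally, so you reorder even when nothing is needed; for example $e=3$, $\bs=(2,3)\in\mathcal A^2_3$ gets swapped. When $\bs\in\overline{\mathcal A}^r_e$, take $\sigma=\mathrm{id}$ and add one common multiple of $e$ to all $s_j$. Your last two steps then reduce to exactly the paper's proof.
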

\begin{proof} If \(\boldsymbol{s}\) fails to satisfy the above assumption stated in the lemma, we may simply replace \(\boldsymbol{s}\) with \(\boldsymbol{s}' = (s_1+k, \dots, s_r+k)\) for $k\gg 0$. Since each block contains a finite number of $r$-partitions, the lemma follows at once.
\end{proof}

\bigskip
\section{Adding empty runner for $r$-partitions and Fock space over quantum affine algebra}

The purpose of this section is to introduce our definition of adding empty runner on abacus of $r$-partitions. After that we recall some basic result on Fock space over quantum affine algebras and the Ariki-Lascoux-Leclerc-Thibon theory on $v$-decomposition. Throughout this section, by Lemma \ref{adjustCharge}, we assume without loss of generality that \(\boldsymbol{s} \in \widetilde{\mathcal A}^r_e(\boldsymbol{\mu})\) for every $r$-partition \(\boldsymbol{\mu}\) under consideration.

\smallskip
\subsection{Definition of adding empty runner for $r$-partitions}
\begin{dfn}\label{lamPlus} Fix an integer $\alpha$ with $0\le \alpha<e$. Let $(\blam,\bs)$ be a given pair. Consider the $e$-tuple column abacus associated with \(\tau_{e, r}\big(L_{\boldsymbol{s}}(\boldsymbol{\lambda})\big)\). Let  \(\bar{\rho}\) be a new runner such that every slot above the dashed line is occupied by a bead, whereas all slots beneath the dashed line remain vacant. Applying Lemma \ref{Uglov}, there is a uniquely determined pair $(\blam^+,\bs^+)$, such that \(\tau_{e+1, r}\big(L_{\boldsymbol{s}^+}(\boldsymbol{\lambda}^+)\big)\) is equal to the $(e+1)$-tuple column abacus formed by inserting the new runner \(\bar{\rho}\) before \(\rho_\alpha\) within the $e$-tuple column abacus associated with \(\tau_{e, r}\big(L_{\boldsymbol{s}}(\boldsymbol{\lambda})\big)\).
\end{dfn}

The construction of $L_{\bs^+}(\boldsymbol{\lambda}^+)$ from $L_{\bs}(\blam)$ corresponds to adding a column to the left of every column $j$ with \(j\in\mathbb Z\) and \(j\equiv \alpha\pmod e\) on the $(e,\bs)$-abacus \(L_{\boldsymbol{s}}(\boldsymbol{\lambda})\). If \(j\ge \alpha\), the newly added column is entirely empty; if \(j<\alpha\) (hence $j<0$), the newly added column is fully occupied by beads. Applying Lemma \ref{bead minus}, we deduce that for each $1\leq i\leq r$, $s_i^+=s_i$. Hence $\bs^+=\bs$ and $L_{\bs^+}(\blam^+)=L_{\bs}(\blam^+)$. Note that, although our notation does not reflect this, the abacus $L_{\bs}(\blam^+)$ does depend upon the choice of $\alpha$. We usually label the runners of the $(e+1)$-tuple column abacus of $\tau_{e+1,r}({L}_{\bs^+}(\blam^+))$ as $\rho^+_0,\rho^+_1,\cdots,\rho^+_{e}$ from left to right.

\begin{remark}\label{keyrem1} Note that in Definition \ref{lamPlus} we put no restrictions on the length of the partitions, and we disallow to insert the new runner \(\bar{\rho}\) to the right of the right most (i.e., $(e-1)$-th) runner $\rho_{e-1}$ in the $e$-tuple column abacus associated with \(\tau_{e, r}\big(L_{\boldsymbol{s}}(\boldsymbol{\lambda})\big)\). That says, in the special case when $r=1$, our above definition of runner inserting is slightly different with James-Mathas's runner inserting in \cite[\S2]{JM}.
\end{remark}

\begin{example}
Let $e=3$, $\bs=(2,2, 4)$ and $\blam=((2), (3,1),(1,1))$. Then $L_{\bs}(\blam)$ is as follows.
\begin{center}
\begin{tikzpicture}[scale=0.5, bb/.style={draw,circle,fill,minimum size=2.5mm,inner sep=0pt,outer sep=0pt}, wb/.style={draw,circle,fill=white,minimum size=2.5mm,inner sep=0pt,outer sep=0pt}]
	
\foreach \x in {10,-9}
\foreach \y in {2,1,0}
{
\node at (\x,\y) {$\cdots$};
}	
	\node [wb] at (9,2) {};
	\node [wb] at (8,2) {};
	\node [wb] at (7,2) {};
	\node [wb] at (6,2) {};
	\node [bb] at (5,2) {};
	\node [bb] at (4,2) {};
	\node [wb] at (3,2) {};
	\node [bb] at (2,2) {};
	\node [bb] at (1,2) {};
	\node [bb] at (0,2) {};
	\node [bb] at (-1,2) {};
	\node [bb] at (-2, 2) {};
	\node [bb] at (-3, 2) {};
	\node [bb] at (-4,2) {};
	\node [bb] at (-5,2) {};
	\node [bb] at (-6,2) {};
	\node [bb] at (-7,2) {};
	\node [bb] at (-8,2) {};

	\node [wb] at (9,1) {};
	\node [wb] at (8,1) {};
	\node [wb] at (7,1) {};
	\node [wb] at (6,1) {};
	\node [bb] at (5,1) {};
	\node [wb] at (4,1) {};
	\node [wb] at (3,1) {};
	\node [bb] at (2,1) {};
	\node [wb] at (1,1) {};
	\node [bb] at (0,1) {};
	\node [bb] at (-1,1) {};
	\node [bb] at (-2, 1) {};
	\node [bb] at (-3, 1) {};
	\node [bb] at (-4,1) {};
	\node [bb] at (-5,1) {};
	\node [bb] at (-6,1) {};
	\node [bb] at (-7,1) {};
	\node [bb] at (-8,1) {};

	\node [wb] at (9,0) {};
	\node [wb] at (8,0) {};
	\node [wb] at (7,0) {};
	\node [wb] at (6,0) {};
	\node [wb] at (5,0) {};
	\node [bb] at (4,0) {};
	\node [wb] at (3,0) {};
	\node [wb] at (2,0) {};
	\node [bb] at (1,0) {};
	\node [bb] at (0,0) {};
	\node [bb] at (-1,0) {};
	\node [bb] at (-2, 0) {};
	\node [bb] at (-3, 0) {};
	\node [bb] at (-4,0) {};
	\node [bb] at (-5,0) {};
	\node [bb] at (-6,0) {};
	\node [bb] at (-7,0) {};
	\node [bb] at (-8,0) {};
	
	\draw[](-5.5,-0.5)--node[]{}(-5.5,2.5);
	\draw[](-2.5,-0.5)--node[]{}(-2.5,2.5);
	\draw[dashed](0.5,-0.5)--node[]{}(0.5,2.5);
		\draw[](3.5,-0.5)--node[]{}(3.5,2.5);
	\draw[](6.5,-0.5)--node[]{}(6.5,2.5);
	\end{tikzpicture}
\end{center}
Let $\alpha=0$, then $L_{\bs}(\blam^+)$ is as follows.
\begin{center}
\begin{tikzpicture}[scale=0.5, bb/.style={draw,circle,fill,minimum size=2.5mm,inner sep=0pt,outer sep=0pt}, wb/.style={draw,circle,fill=white,minimum size=2.5mm,inner sep=0pt,outer sep=0pt}]
	
\foreach \x in {10,-9}
\foreach \y in {2,1,0}
{
\node at (\x,\y) {$\cdots$};
}	
	\node [wb] at (9,2) {};
	\node [wb] at (8,2) {};
	\node [bb] at (7,2) {};
	\node [bb] at (6,2) {};
	\node [wb] at (5,2) {};
	\node [wb] at (4,2) {};
	\node [bb] at (3,2) {};
	\node [bb] at (2,2) {};
	\node [wb] at (1,2) {};
	\node [bb] at (0,2) {};
	\node [bb] at (-1,2) {};
	\node [bb] at (-2, 2) {};
	\node [bb] at (-3, 2) {};
	\node [bb] at (-4,2) {};
	\node [bb] at (-5,2) {};
	\node [bb] at (-6,2) {};
	\node [bb] at (-7,2) {};
	\node [bb] at (-8,2) {};

	\node [wb] at (9,1) {};
	\node [wb] at (8,1) {};
	\node [bb] at (7,1) {};
	\node [wb] at (6,1) {};
	\node [wb] at (5,1) {};
	\node [wb] at (4,1) {};
	\node [bb] at (3,1) {};
	\node [wb] at (2,1) {};
	\node [wb] at (1,1) {};
	\node [bb] at (0,1) {};
	\node [bb] at (-1,1) {};
	\node [bb] at (-2, 1) {};
	\node [bb] at (-3, 1) {};
	\node [bb] at (-4,1) {};
	\node [bb] at (-5,1) {};
	\node [bb] at (-6,1) {};
	\node [bb] at (-7,1) {};
	\node [bb] at (-8,1) {};

	\node [wb] at (9,0) {};
	\node [wb] at (8,0) {};
	\node [wb] at (7,0) {};
	\node [bb] at (6,0) {};
	\node [wb] at (5,0) {};
	\node [wb] at (4,0) {};
	\node [wb] at (3,0) {};
	\node [bb] at (2,0) {};
	\node [wb] at (1,0) {};
	\node [bb] at (0,0) {};
	\node [bb] at (-1,0) {};
	\node [bb] at (-2, 0) {};
	\node [bb] at (-3, 0) {};
	\node [bb] at (-4,0) {};
	\node [bb] at (-5,0) {};
	\node [bb] at (-6,0) {};
	\node [bb] at (-7,0) {};
	\node [bb] at (-8,0) {};
	
	\draw[](-7.5,-0.5)--node[]{}(-7.5,2.5);
	\draw[](-3.5,-0.5)--node[]{}(-3.5,2.5);
	\draw[dashed](0.5,-0.5)--node[]{}(0.5,2.5);
		\draw[](4.5,-0.5)--node[]{}(4.5,2.5);
	\draw[](8.5,-0.5)--node[]{}(8.5,2.5);
	\end{tikzpicture}
\end{center}
The $e$-tuple abaci of $\tau_{e, r}(L_{\bs}(\blam))$ and $\tau_{e+1, r}(L_{\bs}(\blam^+))$ are as follows:
\begin{center}
\begin{tikzpicture}[scale=0.5, bb/.style={draw,circle,fill,minimum size=2.5mm,inner sep=0pt,outer sep=0pt}, wb/.style={draw,circle,fill=white,minimum size=2.5mm,inner sep=0pt,outer sep=0pt}]
\foreach \x in {-1,0,1}
\foreach \y in {5,-6.5}
{
\node at (\x,\y) {$\vdots$};
}
\node[bb] at (-1, 4){};
\node[bb] at (-1, 3){};
\node[bb] at (-1, 2){};
\node[bb] at (-1, 1){};
\node[bb] at (-1, 0){};
\node[wb] at (-1, -1){};
\node[bb] at (-1, -2){};
\node[bb] at (-1, -3){};
\node[wb] at (-1, -4){};
\node[bb] at (-1, -5){};
\node[wb] at (-1, -6){};

\node[bb] at (0, 4){};
\node[bb] at (0, 3){};
\node[bb] at (0, 2){};
\node[bb] at (0, 1){};
\node[bb] at (0, 0){};
\node[bb] at (0, -1){};
\node[wb] at (0, -2){};
\node[wb] at (0, -3){};
\node[bb] at (0, -3){};
\node[bb] at (0, -4){};
\node[wb] at (0, -5){};
\node[wb] at (0, -6){};

\node[bb] at (1, 4){};
\node[bb] at (1, 3){};
\node[bb] at (1, 2){};
\node[bb] at (1, 1){};
\node[wb] at (1, 0){};
\node[wb] at (1, -1){};
\node[wb] at (1, -2){};
\node[wb] at (1, -3){};
\node[wb] at (1, -4){};
\node[wb] at (1, -5){};
\node[wb] at (1, -6){};

\draw[dashed](-1.5,0.5)--node[]{}(1.5,0.5);
	\end{tikzpicture}
,\qquad\qquad\qquad  \begin{tikzpicture}[scale=0.5, bb/.style={draw,circle,fill,minimum size=2.5mm,inner sep=0pt,outer sep=0pt}, wb/.style={draw,circle,fill=white,minimum size=2.5mm,inner sep=0pt,outer sep=0pt}]
\foreach \x in {-2,-1,0,1}
\foreach \y in {5,-6.5}
{
\node at (\x,\y) {$\vdots$};
}
\node[bb] at (-2, 4){};
\node[bb] at (-2, 3){};
\node[bb] at (-2, 2){};
\node[bb] at (-2, 1){};
\node[wb] at (-2, 0){};
\node[wb] at (-2, -1){};
\node[wb] at (-2, -2){};
\node[wb] at (-2, -3){};
\node[wb] at (-2, -4){};
\node[wb] at (-2, -5){};
\node[wb] at (-2, -6){};

\node[bb] at (-1, 4){};
\node[bb] at (-1, 3){};
\node[bb] at (-1, 2){};
\node[bb] at (-1, 1){};
\node[bb] at (-1, 0){};
\node[wb] at (-1, -1){};
\node[bb] at (-1, -2){};
\node[bb] at (-1, -3){};
\node[wb] at (-1, -4){};
\node[bb] at (-1, -5){};
\node[wb] at (-1, -6){};

\node[bb] at (0, 4){};
\node[bb] at (0, 3){};
\node[bb] at (0, 2){};
\node[bb] at (0, 1){};
\node[bb] at (0, 0){};
\node[bb] at (0, -1){};
\node[wb] at (0, -2){};
\node[wb] at (0, -3){};
\node[bb] at (0, -3){};
\node[bb] at (0, -4){};
\node[wb] at (0, -5){};
\node[wb] at (0, -6){};

\node[bb] at (1, 4){};
\node[bb] at (1, 3){};
\node[bb] at (1, 2){};
\node[bb] at (1, 1){};
\node[wb] at (1, 0){};
\node[wb] at (1, -1){};
\node[wb] at (1, -2){};
\node[wb] at (1, -3){};
\node[wb] at (1, -4){};
\node[wb] at (1, -5){};
\node[wb] at (1, -6){};

\draw[dashed](-1.5,0.5)--node[]{}(1.5,0.5);
	\end{tikzpicture}
.
	\end{center}
In particular, $$
\tau_{e, r}(L_{\bs}(\blam))=L_8(8,7,5^2,3,2),\quad \tau_{e+1, r}(L_{\bs}(\blam^+))=L_8(14,12,9^2,6,4,1^2),\quad \blam^+=((4,1),(5,2),(3^2,1^2)).
$$
\end{example}

\begin{lemma}\label{ssPlus} Suppose $\bs\in  \widetilde A^r_{e}(\blam)$. Then $\bs \in \widetilde A^r_{e+1}(\blam^+)$.
\end{lemma}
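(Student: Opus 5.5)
We need to show two things: that $\bs\in\overline{\mathcal A}^r_{e+1}$, and that $s_j\ge \ell(\blam^{+(j)})$ for every $j$. The plan is to use the discussion after Definition \ref{lamPlus}, which says that $\bs^+=\bs$, so the multicharge is unchanged and only the partitions move.

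\textbf{The charge condition.} Since $\bs\in\overline{\mathcal A}^r_e$, we have $0\le s_j-s_i\le e$ for all $i<j$. This immediately gives $0\le s_j-s_i\le e<e+1$. Hence $\bs\in\overline{\mathcal A}^r_{e+1}$, and in fact $\bs\in\mathcal A^r_{e+1}$.

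\textbf{The length condition.} By Lemma \ref{s_jge}, the hypothesis $s_j\ge\ell(\blam^{(j)})$ for all $j$ is equivalent to the following: every position of the $e$-tuple column abacus $\tau_{e,r}(L_\bs(\blam))$ lying above the dashed line is occupied by a bead. The abacus $\tau_{e+1,r}(L_{\bs}(\blam^+))$ is obtained from it by inserting the runner $\bar\rho$ before $\rho_\alpha$.

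The key step is to check that the dashed line is the same in both abaci. For the $e$-tuple abacus it separates row $-1$ from row $0$, i.e.\ position $-1$ from $0$. After insertion we get an $(e+1)$-tuple abacus, and the positions above the dashed line are those in rows $y<0$, i.e.\ positions $y(e+1)+x<0$. On the old runners these rows are exactly the rows above the old dashed line. On $\bar\rho$ they are exactly the slots declared occupied in Definition \ref{lamPlus}. So every position above the dashed line of $\tau_{e+1,r}(L_\bs(\blam^+))$ is occupied.

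Applying Lemma \ref{s_jge} again, now with $e+1$ in place of $e$ and multicharge $\bs^+=\bs$, gives $s_j\ge\ell(\blam^{+(j)})$ for all $j$.

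\textbf{Alternative check and main obstacle.} One can also verify this directly on $L_\bs(\blam^+)$. By the remark after Definition \ref{lamPlus}, columns are inserted to the left of each column $j\equiv\alpha$. The inserted columns with $j<0$ are full, so every position left of the dashed line is still occupied. The only delicate point is that, for $\alpha\ge 0$, the column inserted before $j=\alpha$ is empty and lies to the right of the dashed line, so it does not affect positions to the left of it. Making this bookkeeping of the dashed-line position precise is the main (mild) obstacle; once it is done, Lemma \ref{s_jge} finishes the proof.
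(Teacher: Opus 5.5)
Your proof is correct and is essentially the paper's. The paper also uses $\bs^+=\bs\in\overline{\mathcal A}^r_{e+1}$, notes that every column left of the dashed line in $L_{\bs}(\blam^+)$ is full by construction (your ``alternative check''), and concludes with Lemma~\ref{s_jge}; your main route is the same check done on the equivalent Uglov-image side of Lemma~\ref{s_jge}, which works because inserting $\bar\rho$ does not shift any rows.
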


\begin{proof} Since \(\boldsymbol{s} \in \overline{\mathcal A}^r_e\), it follows that \(\boldsymbol{s}^+ \in \overline{\mathcal A}^r_{e+1}\). By our construction of \(L_{\boldsymbol{s}}(\boldsymbol{\lambda}^+)\), all columns to the left of the dashed line on \(L_{\boldsymbol{s}}(\boldsymbol{\lambda}^+)\) are fully occupied by beads. Lemma \ref{s_jge} implies \(s_j \ge \ell\big((\boldsymbol{\lambda}^+)^{(j)}\big)\) for every integer \(1 \le j \le r\).
\end{proof}

If $B$ is a block of \(\H^{\Lambda_{\bs}}_{r,n}\) corresponding to $\alpha\in Q_n^+$, we write \(w(B)\) for the weight of $B$ as defined in \cite{F1}. It is well-known that $w(B):=2(\Lambda_{\bs},\alpha)-(\alpha,\alpha)$.

\begin{lemma}\label{block moving vector}
Let $B$ denote the block of $\H_{r,n}^{\Lam_{\bs}}$ containing $(\blam, \bs)$. Let $B_1$ denote the block of $\H_{r,n^+}^{\Lam_{\bs}}$ containing $(\blam^+, \bs)$, where $n^+:=|\lambda^+|$. Then the block moving vector of $B$ is equal to the block moving vector of $B_1$. In particular, $w(B)=w(B_1)$.
\end{lemma}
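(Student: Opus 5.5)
Throughout I treat the block moving vector as in \cite{LQ2025}: one passes to the Uglov image $\tau_{e,r}(L_\bs(\blam))$ and reads off, runner by runner, data that depend only on the block (the residue content $\mathcal C_{e,\bs}(\blam)$ together with $\bs$). The plan is therefore to compute everything on the $e$-tuple column abacus of $\tau_{e,r}(L_\bs(\blam))$ and on the $(e+1)$-tuple column abacus of $\tau_{e+1,r}(L_{\bs}(\blam^+))$. By Definition \ref{lamPlus}, the latter is the former with the runner $\bar\rho$ inserted before $\rho_\alpha$, where $\bar\rho$ is full above the dashed line and empty below. So $\rho^+_j=\rho_j$ for $j<\alpha$, $\rho^+_\alpha=\bar\rho$, and $\rho^+_{j+1}=\rho_j$ for $j\ge\alpha$.

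\textbf{Step 1.} I first express the moving vector combinatorially. In the normalisation $\bs\in\widetilde{\mathcal A}^r_e$ (which holds for $\blam^+$ by Lemma \ref{ssPlus}), all positions above the dashed line of the Uglov abacus are full by Lemma \ref{s_jge}. The moving vector records, for each pair of consecutive runners, the difference in bead counts below the dashed line, i.e.\ the differences of the runner charges. These charges are invariants of the $e$-core of $\tau_{e,r}(L_\bs(\blam))$, hence block invariants; I will invoke this from \cite{LQ2025} and \cite{JL}.

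\textbf{Step 2.} Under insertion, each old runner keeps its bead configuration, hence its charge. The new runner $\bar\rho$ has charge $0$, exactly as in the runner sitting just above the dashed line of an empty core, because it is full above the line and empty below. The sequence of consecutive charge differences of the new abacus is therefore the old sequence with one entry inserted. That entry has to be computed.

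\textbf{Step 3.} This is the main obstacle: matching conventions so that the inserted entry disappears, i.e.\ the moving vector is literally unchanged. I would verify that the moving vector in \cite{LQ2025} is indexed by the $r$ components (the charges $s_j$ and the core multicharge $\bs'$ of \cite{JL}) rather than by runners of the Uglov abacus. Concretely, the $(e,\bs)$-core of $\blam$ corresponds to the $e$-core of $\tau_{e,r}(L_\bs(\blam))$. Taking that core commutes with inserting $\bar\rho$, because sliding beads up a runner and inserting an untouched runner commute. The resulting reduced multicharge is also unchanged, since the column insertion preserves each $s_i$ (shown after Definition \ref{lamPlus}), and the core's components gain exactly the same inserted columns. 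Hence the moving vector, defined as $\bs'$ minus the core multicharge, agrees for $B$ and $B_1$.

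\textbf{Step 4.} For the weight, I would use \cite{LQ2025}, where $w(B)$ is given as the sum of the components of the moving vector. Alternatively, $w$ equals the $e$-weight of $\tau_{e,r}(L_\bs(\blam))$, i.e.\ the total number of upward bead moves to reach the core. Inserting a runner leaves the bead moves on each runner unchanged, so $w(B)=w(B_1)$ follows directly. This gives the "in particular" clause independently of the convention check in Step 3.
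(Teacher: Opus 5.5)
Your Steps 3 and 4 are essentially the paper's argument. Since $\bar\rho$ is full above the dashed line and empty below, it admits no upward bead moves on the Uglov abacus and does not affect those on the other runners. Hence the reduced core and the moving data are unchanged, the core multicharge $\bs'$ being preserved by the charge-preserving column insertion. The paper then deduces $w(B)=w(B_1)$ from \cite[Lemma 4.1.15]{LQ2025}.

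You should discard Steps 1--2: consecutive runner-charge differences are not the moving vector, and that vector really would acquire an extra entry. If the moving vector counts bead moves between consecutive components rather than being literally $\bs-\bs'$, it is still preserved. Each upward move keeps its row after the insertion, hence the same transfer between components. Equivalently, it is determined by $\bs-\bs'$ together with the total number of moves from Step 4.
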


\begin{proof}
The bead moves defined on \(L_{\boldsymbol{s}}(\boldsymbol{\lambda})\) correspond to the following rule on \(\tau_{e,r}\big(L_{\boldsymbol{s}}(\boldsymbol{\lambda})\big)\): if a bead has an empty position directly above it, we may shift that bead into the vacant spot. By the properties of the additional
runner we inserted, this runner does not interfere with all such bead moves.

Then the block moving vector of $B$ is equal to the block moving vector of $B_1$. In particular, by \cite[Lemma 4.1.15]{LQ2025}, $w(B)=w(B_1)$.
\end{proof}

%\medskip
%\subsection{}
Let $\widetilde{\Sym}_e$ be the affine Weyl group of type $A_{e-1}^{(1)}$. It is generated by $\sigma_j, j\in\Z/e\Z$,
which satisfy the following relations: $$\begin{aligned}
& \sigma_i^2=1,\quad\forall\,i\in\Z/e\Z,\\
& \sigma_i\sigma_{i+1}\sigma_i=\sigma_{i+1}\sigma_i\sigma_{i+1},\quad\forall\,i\in I,\\
& \sigma_i\sigma_j=\sigma_j\sigma_i,\quad\forall\,j\neq i\pm 1.
\end{aligned}
$$
There is a faithful action of $\widetilde{\Sym}_e$ on $\Z$ defined as follows (see \cite[\S1.2.3]{F2}): $\forall\,j\in I, k\in\Z$, $$
\sigma_j\cdot k=\begin{cases}  k-1, &\text{if $k\equiv j\pmod{e}$,}\\
 k+1, &\text{if $k\equiv j-1\pmod{e}$,}\\
 k, &\text{otherwise.}\\
\end{cases}
$$
This induces an action of $\widetilde{\Sym}_e$ on the set of $r$-abaci.

\begin{lemma}\text{\rm \text{\rm (\cite[Lemma 3.2.1]{LQ2025})})}\label{3.2.1} Let $L$ be an $r$-abacus diagram (with a vertical dashed line separating positions $-1$ and $0$). Assume that for each \(1 \leq i \leq r\), both the number of beads on \(L_i\) lying to the right of the dashed line and the number of empty positions on \(L_i\) lying to the left of the dashed line are finite. Let $\bs$ be the multicharge of $L$ as defined in (\ref{Biject1}).
Then for any $j\in I$, the multicharge of $\sigma_j(L)$ is still equal to $\bs$.
\end{lemma}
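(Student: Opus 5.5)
The statement to prove is Lemma \ref{3.2.1}: for $j\in I$, the multicharge of $\sigma_j(L)$ equals that of $L$. I would argue one row at a time. By (\ref{Biject1}), the multicharge component $s_i$ depends only on the $i$-th row $L_i$, and $\sigma_j$ acts on each row separately. It therefore suffices to treat a single row: a subset $B\subseteq\Z$ such that $B\cap\Z_{\ge 0}$ and $\Z_{<0}\setminus B$ are both finite. Writing
$$c(B):=\#(B\cap\Z_{\ge0})-\#(\Z_{<0}\setminus B),$$
the task is to show $c(\sigma_j\cdot B)=c(B)$.

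Step 1 is to identify the action. The permutation $\sigma_j$ of $\Z$ swaps $k$ and $k-1$ for every $k\equiv j\pmod e$ and fixes all other integers. So $\sigma_j\cdot B$ is obtained by applying the transposition $(k-1,k)$, for each $k\equiv j$, to the pair of positions $\{k-1,k\}$. These pairs are pairwise disjoint because $e\ge2$. Within each pair, if both positions carry beads or both are empty nothing changes; otherwise the single bead moves to the other position of the pair.

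Step 2 is to show the finiteness hypotheses are preserved and to localise the change. Only finitely many pairs straddle a region where $B$ is neither full nor empty, so $\sigma_j\cdot B$ still has finitely many beads to the right of the dashed line and finitely many gaps to the left. The key observation is that $c(B)$ is unchanged under moving a bead between positions lying on the same side of the dashed line. If both positions are nonnegative, the count $\#(B\cap\Z_{\ge0})$ is unchanged. If both are negative, the count $\#(\Z_{<0}\setminus B)$ is unchanged.

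Step 3 handles the one pair that can cross the dashed line, namely $\{-1,0\}$, which is a pair exactly when $j\equiv0\pmod e$. If the bead moves from $-1$ to $0$, one new gap appears to the left and one new bead appears to the right, so both terms of $c$ increase by $1$ and $c$ is unchanged. The move from $0$ to $-1$ is symmetric: both terms decrease by $1$. Combining Steps 1–3, summing over the disjoint pairs gives $c(\sigma_j\cdot B)=c(B)$.

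Alternatively, one can use the normalised count $c(B)=\lim_{N\to\infty}\bigl(\#(B\cap[-N,N])-N\bigr)$. This is manifestly invariant under any bijection of $\Z$ that preserves $[-N,N]$ for all large $N$ up to boundary pairs; boundary pairs do not matter because near $\pm N$ both positions are full or both are empty. There is no real obstacle here; the only point needing care is the crossing pair $\{-1,0\}$, handled in Step 3.
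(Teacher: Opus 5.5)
Your argument is correct and complete. You identify $\sigma_j$ as acting row by row through the disjoint transpositions $(k-1,k)$ with $k\equiv j\pmod e$; the finiteness hypotheses ensure only finitely many of these move a bead. The only move that can cross the dashed line is at the pair $\{-1,0\}$ (when $j=0$ in $\Z/e\Z$), and there both terms of (\ref{Biject1}) change by the same amount. The paper gives no proof of its own and simply cites \cite[Lemma 3.2.1]{LQ2025}, so your check supplies that verification directly.
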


Applying Lemma \ref{3.2.1}, we see that the action of $\widetilde{\Sym}_e$ on the set of $r$-abaci induces an action of $\widetilde{\Sym}_e$ on $\PP_{r,n}$. In particular, for any $j\in I$,
the $r$-abacus of $(\sigma_j(\blam),\bs)$ is obtained by interchanging columns $j-1+ke$ and $j+ke$ in the $r$-abacus of $L_\bs(\blam)$ for all $k\in \Z$.

\medskip
\subsection{Fock spaces over the quantum affine algebra $U_v(\widehat{\mathfrak{sl}}_e)$}

The simple modules of the cyclotomic Hecke algebra $\H_{r,n}^{\Lam_{\bs}}$ can be parameterized by the set of regular $r$-partitions. The $(e,\bs)$-regular $r$-partitions are generalizations of the $e$-regular partitions and can be viewed as a dual version of the so-called Kleshchev $r$-partitions. In this subsection, we shall recall the definition of $(e,\bs)$-regular $r$-partitions.

\begin{dfn} Let $i\in I$. For any two $i$-nodes $\gamma=(b,c,j)$ and $\gamma'=(b',c',j')$, we say that $\gamma$ is below $\gamma'$ (equivalently, $\gamma'$ is above $\gamma$) if either $j>j'$ or $j=j'$ and $b>b'$. In this case, we write $\gamma\succ\gamma'$. Given an $r$-partition $\blam$ and an $i$-node $\gamma$, we write $\rem_{i}{\blam}\uparrow_\gamma$ and $\add_{i}{\blam} \uparrow_\gamma$ for the number of removable $i$-nodes  and addable $i$-nodes of $\blam$ above $\gamma$, respectively.
\end{dfn}
Note this order restricts to a total order on the set of all addable and removable nodes of residue $i \in I$ of an $r$-partition.

Suppose $\blam$ is an $r$-partition and $i \in\Z/e\Z$. We define the \emph{$i$-signature of $\blam$ with respect to ``$\succ$}'' by examining all the addable and removable $i$-nodes of $\blam$ in turn from lower to higher, and writing an ``$A$'' for each addable $i$-node and an ``$R$'' for each removable $i$-node. Now construct the \emph{reduced $i$-signature with respect to $\succ$} by successively deleting all adjacent pairs ``$RA$''. If there are any ``$R$'' in the reduced $i$-signature of $\blam$, the lowest of these nodes is called the \emph{good $i$-node of $\blam$ with respect to ``$\succ$''}.

%(It may be necessary to replace it with the definition of the dual Kleshchev $r$-partition.)

\begin{definition}
We say that $\blam$ is an \emph{$(e,\bs)$-regular $r$-partition} if and only if there is a sequence
$$\blam= \blam(n), \blam(n-1), \ldots, \blam(0)= \bvarnothing$$
of $r$-partitions such that for each $k$, $[\blam(k-1)]$ is obtained from $[\blam(k)]$ by removing a good node of  $[\blam(k)]$ with respect to $\succ$.
\end{definition}

\begin{lemma}\text{\rm (\cite[Lemma 1.9]{F2})}\label{Kleequ}
Let $\blam$ be an $r$-partition, $0\le i\le e-1$. Suppose either $\blam$ has no addable $i$-nodes or $\blam$ has no removable $i$-nodes  . Then $\blam$ is an $(e,\bs)$-regular $r$-partition if and only if $\sigma_i(\blam)$ is an $(e,\bs)$-regular $r$-partition.
\end{lemma}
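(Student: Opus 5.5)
The plan is to read the statement as a fact about the Kashiwara crystal on $r$-partitions determined by the order $\succ$. Let $\tilde e_i$ remove the good $i$-node, when one exists, and let $\tilde f_i$ add the \emph{cogood} $i$-node. The cogood node is the highest ``$A$'' remaining in the reduced $i$-signature. By the definition of $(e,\bs)$-regularity, the regular $r$-partitions are exactly those $\blam$ for which some sequence of operators $\tilde e_{i}$ carries $\blam$ to $\bvarnothing$. I will use the standard crystal fact that $\tilde e_i\blam=\bmu$ if and only if $\tilde f_i\bmu=\blam$. This follows from the bracketing rule: deleting adjacent ``$RA$'' pairs, removing the lowest uncancelled $R$ turns it into an uncancelled $A$ that is the highest one. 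Consequently the set of regular $r$-partitions is a union of connected components of the $i$-coloured graph (namely the component of $\bvarnothing$), so it is closed under every $\tilde e_i$ and $\tilde f_i$ whenever the result is nonzero.

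Next I describe $\sigma_i(\blam)$ concretely using Lemma \ref{add and rem}. The action of $\sigma_i$ swaps columns $b-1$ and $b$ of $L_\bs(\blam)$ for every $b\equiv i \pmod e$. At each such pair of positions there are four possibilities: bead/bead, empty/empty, bead at $b-1$ with $b$ empty (an addable $i$-node), and $b-1$ empty with a bead at $b$ (a removable $i$-node). Swapping leaves the first two unchanged and interchanges the last two. Hence $\sigma_i$ turns every addable $i$-node into a removable one and vice versa, and changes nothing else.

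Now suppose $\blam$ has no addable $i$-nodes, and let $m$ be its number of removable $i$-nodes. Then the $i$-signature of $\blam$ is $R^m$ with nothing cancelled, and $\sigma_i(\blam)$ is obtained from $\blam$ by removing all $m$ of these nodes. Each application of $\tilde e_i$ removes one of them, and afterwards the signature is still a string of $R$'s followed by $A$'s, so no $RA$ cancellation ever occurs. Therefore $\tilde e_i^{\,m}\blam=\sigma_i(\blam)$ and $\tilde f_i^{\,m}\sigma_i(\blam)=\blam$. By the closure property above, $\blam$ is regular if and only if $\sigma_i(\blam)$ is regular. The case where $\blam$ has no removable $i$-nodes follows by applying this argument to $\sigma_i(\blam)$, which has no addable $i$-nodes, and using $\sigma_i^2=1$.

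The main obstacle is the closure of the regular set under $\tilde f_i$ in the ``only if'' direction. Concretely, suppose $\blam$ is regular and $\tilde e_i\bmu=\blam$; I must show $\bmu$ is regular. I will do this by induction on $|\bmu|$ via commutation behaviour of the operators. The cleaner route is to quote the identification of this combinatorial crystal with the Fock space crystal $B(\Lam_\bs)$ via \cite{A1} and Jimbo--Misra--Miwa--Okado. There the component of $\bvarnothing$ is the connected highest-weight crystal $B(\Lam_\bs)$, so it is automatically stable under all $\tilde e_i$ and $\tilde f_i$. I would cite that identification rather than reprove it.
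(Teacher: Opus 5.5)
Your proposal is correct and takes essentially the paper's route: the paper only defers to Fayers' argument for Kleshchev $r$-partitions, which is the same crystal/good-node reasoning you use, and it handles the no-removable case by applying the no-addable case to $\sigma_i(\blam)$, exactly as you do. Two corrections to your bookkeeping. First, closure under $\tilde f_i$ follows directly from the definition of regularity, and the genuinely nontrivial input, needed for $\blam$ regular $\Rightarrow$ $\sigma_i(\blam)=\tilde e_i^{\,m}\blam$ regular, is closure under $\tilde e_i$; your claim that the regular set is a union of components does not follow from $\tilde e_i\blam=\bmu\iff\tilde f_i\bmu=\blam$ alone, but your citation of $B(\Lambda_\bs)$ does supply it. Second, read from lower to higher, the intermediate signatures are $A^jR^{m-j}$, not $R$'s followed by $A$'s.
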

\begin{proof} Suppose  $\blam$ has no addable $i$-nodes. In \cite[Lemma 1.9]{F2}), Fayers proved the statement for Kleshchev $r$-partitions. But the same argument also works for $(e,\bs)$-regular $r$-partitions. This proves the lemma when $\blam$ has no addable $i$-nodes.

Now suppose $\blam$ has no removable $i$-nodes. Then $\sigma_i(\blam)$ has no addable $i$-nodes. So applying the first part of the lemma, we again deduce that $\blam$ is an $(e,\bs)$-regular $r$-partition if and only if $\sigma_i(\blam)$ is an $(e,\bs)$-regular $r$-partition.
\end{proof}

One of the central problems in the study of the cyclotomic Hecke algebra $\H_{r,n}^{\Lambda_{\bs}}$ is computing the decomposition numbers. That is, the composition multiplicity of each simple module $D^\bmu$ in each Specht module $S^\lambda$. According to the Ariki-Lascoux-Leclerc-Thibon theory, when \(\operatorname{char}K=0\), the decomposition numbers \([S^\blam:D^\bmu]\) can be obtained by evaluating the coefficient polynomial \(d^{e,\bs}_{\blam,\bmu}(v)\) at \(v=1\). This polynomial $d^{e,\bs}_{\blam,\bmu}(v)$ arises from expanding the canonical basis element \(G(\bmu)\) of the integral highest-weight module \(V(\Lambda_\bs)\) in terms of the natural basis \(\{s_{\blam}\}\) of the Fock space $\mathcal{F}(\Lambda_\bs)$.

Recall that the quantised enveloping algebra $U_v(\widehat{\mathfrak{sl}}_e)$ is a $\mathbb{Q}(v)$-algebra with standard generators $E_i, F_i$ for $i\in I$ and $v^h$ for $h\in P^\vee$, where $P^\vee$ is a free $\mathbb{Z}$-module with basis $\{h_i\mid i\in I\}\cup\{d\}$.
These generators are subjected to some well known relations which can be found in \cite[\S 4.1]{LLT96}. Here we follow the usual notation for $v$-integers, $v$-factorials and $v$-binomial coefficients:
$$[k]=\dfrac{v^{k}-v^{-k}}{v-v^{-1}}, \quad [k]!=[k][k-1] \cdots [1], \quad \left[\begin{matrix}
m\\k
\end{matrix}\right]=\dfrac{[m]!}{[m-k]![k]!}.$$
For any integer $m>0$, we write $F_i^{(m)}$ to denote the quantum divided power $F_i^m/[m]!$.
Since $U_v(\widehat{\mathfrak{sl}}_e)$ is a Hopf algebra with co-multiplication, the tensor product of two $U_v(\widehat{\mathfrak{sl}}_e)$-modules naturally becomes a $U_v(\widehat{\mathfrak{sl}}_e)$-module.
The $\mathbb{Q}$-linear ring automorphism $\overline{\phantom{o}}:\,\,U_v(\widehat{\mathfrak{sl}}_e)\to U_v(\widehat{\mathfrak{sl}}_e)$ defined by
\[\overline{E_i}:=E_i,\qquad \overline{F_i}:=F_i,\qquad \overline v^k:=v^{-k}, \qquad \overline{v^h}:=v^{-h}\]
for $i\in I$, $k\in\Z$, and $h\in P^\vee$ is called \emph{bar involution}.

\begin{definition} We define the \textbf{Fock space} \(\mathcal{F}_{\bs}=\mathcal{F}(\Lambda_\bs)\) to be the \(\mathbb{Q}(v)\)-vector space with basis \(\{s_{\blam} \mid \blam\in\mathscr{P}_{r,n},\, n\in\mathbb{N}\}\), which we refer to as the \textbf{standard basis}.
\end{definition}

The Fock space $\mathcal{F}_{\bs}$ has a structure of $U_v(\widehat{\mathfrak{sl}}_e)$-module (\cite[Proposition 2.6]{AM})\footnote{Note that the $r$-partition $\blam$ in this paper should be identified with $\blam'$ in the notations of \cite{AM}}, which can be described as follows: for any $i\in \Z/e\Z$, $h\in P^\vee$,
\begin{equa}\label{action1}\left\{\begin{aligned}
 E_is_\blam:&=\sum_{\gamma\in\rem_i(\blam)}v^{\#\rem_i(\blam)\downarrow_{\gamma}-\#\add_i(\blam)\downarrow_{\gamma}}s_{\blam\setminus\gamma},\\
 F_is_\blam:&=\sum_{\gamma\in\add_i(\blam)}v^{\#\add_i(\blam)\uparrow_{\gamma}-\#\rem_i(\blam)\uparrow_{\gamma}}s_{\blam\vee\gamma},\\
 v^h s_\blam:&=v^{\langle h,\Lambda_\bs-\sum_{i\in I}c_i(\blam)\alpha_i\rangle}s_\blam=v^{\#\add_i(\blam)-\#\rem_i(\blam)}s_\blam,
\end{aligned}\right.
\end{equa}
where $\add_i(\blam)\!\uparrow_{\gamma}$ (resp., $\add_i(\blam)\!\downarrow_{\gamma}$) denotes the set of addable $i$-nodes of $\blam$ above $\gamma$ (resp., below $\gamma$), $\rem_i(\blam)\!\uparrow_{\gamma}$ (resp., $\rem_i(\blam)\!\downarrow_{\gamma}$) denotes the set of removable $i$-nodes of $\blam$ above $\gamma$ (resp., below $\gamma$), $c_i(\blam)$ denote the set of $i$-nodes of $\blam$.

Let $m\in\N$, $\blam\in\mathscr{P}_{r,n}, \bmu\in\mathscr{P}_{r,n+m}$, We write $\blam \xrightarrow{m:i}\bmu $ to indicate that $\bmu$ is obtained from $\blam$ by adding $m$ addable $i$-nodes of $\blam$. Equivalently, this means that the abacus display for $L_\bs(\bmu)$ is obtained from that of $L_\bs(\blam)$ by moving $m$ beads from runner $\rho_{i-1}$ to runner $\rho_i$. In this case, we define the integer
\begin{equa}\label{action000}
N_i(\blam,\bmu)= \sum_{\gamma\in\bmu\setminus\blam}\Bigl(\#\add_{i}{\bmu} \uparrow_\gamma-\#\rem_{i}{\blam} \uparrow_\gamma\Bigr).\end{equa}
Now the action of $F_i^{(m)}$ is given by \begin{equa}\label{action111}
F_i^{(m)}s_\blam = \sum_{\blam\xrightarrow{m:i}\bmu}v^{N_i(\blam, \bmu)}s_\bmu. \end{equa}
If $m=1$ then we write $\blam \xrightarrow{i}\bmu$ instead of $\blam \xrightarrow{1:i}\bmu $ for simplicity.

\begin{proposition}\text{\rm (\cite[Proposition 3.2]{DellA24b})}\label{Ncomps}
Let $i\in I$. Suppose $\blam$ and $\bmu$ are $r$-partitions such that $\blam\xrightarrow{m:i}\bmu$. Then
$$
N_i( \blam, \bmu)=\sum_{\gamma\in \bmu \backslash \blam}\Bigl(\#\add_i (\mu^{(J_\gamma)} )\uparrow_\gamma-\#\rem_i (\lambda^{(J_\gamma)})\uparrow_\gamma\Bigr)+ \sum^{J_\gamma-1}_{j=1}\Bigl(\#\add_i (\mu^{(j)})-\#\rem_i (\lambda^{(j)})\Bigr),
$$
where $J_{\gamma}$ is the component of the node $\gamma$ in $\bmu$.
\end{proposition}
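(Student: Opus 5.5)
The plan is to start from the definition \eqref{action000} of $N_i(\blam,\bmu)$ and reorganise each summand according to the total order $\succ$ on $i$-nodes. Fix $\gamma\in\bmu\setminus\blam$ and write $J=J_\gamma$ for its component. By the definition of ``above'', an $i$-node $\gamma'=(b',c',j')$ lies above $\gamma=(b,c,J)$ exactly when either $j'<J$, or $j'=J$ and $b'<b$. Consequently the set $\add_i(\bmu)\uparrow_\gamma$ splits as a disjoint union. One part consists of all addable $i$-nodes of $\bmu$ lying in components $j=1,\dots,J-1$. The other part consists of the addable $i$-nodes of $\bmu$ in component $J$ that lie in rows strictly above the row of $\gamma$. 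The set $\rem_i(\blam)\uparrow_\gamma$ splits in exactly the same way.

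Next I would check that the addable and removable $i$-nodes of the $r$-partition lying in component $j$ are precisely the addable and removable $i$-nodes of the single partition $\bmu^{(j)}$ (respectively $\blam^{(j)}$), with residues computed using the charge $s_j$. This holds because being addable or removable is a condition internal to one component. It is also visible on the abacus via Lemma~\ref{add and rem}, since such a node corresponds to a bead/gap configuration on the single row $L_{s_j}$.

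Summing over $\gamma$ then gives the stated formula. The first sum is the in-component contribution, interpreted with $\uparrow_\gamma$ taken inside the partition $\mu^{(J_\gamma)}$ or $\lambda^{(J_\gamma)}$. The second sum, $\sum_{j=1}^{J_\gamma-1}\bigl(\#\add_i(\mu^{(j)})-\#\rem_i(\lambda^{(j)})\bigr)$, collects all nodes in the earlier components.

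The only point needing care is bookkeeping rather than a genuine obstacle. The formula mixes $\bmu$ for addable nodes with $\blam$ for removable nodes, so when splitting I must keep the same pairing, namely $\add$ of $\bmu$ and $\rem$ of $\blam$, in both pieces. I also need to confirm the orientation convention: the paper's $\succ$ declares a node with smaller component index to be \emph{above}, so components $1,\dots,J-1$ are exactly the ones that contribute in full. No further properties of $\blam\xrightarrow{m:i}\bmu$ are used beyond the fact that each $\gamma$ is an $i$-node.
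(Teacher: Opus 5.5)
Your proposal is correct. It is essentially the only argument available, and it matches what the cited result amounts to; the paper itself gives no proof beyond citing \cite[Proposition 3.2]{DellA24b}. You split $\add_i(\bmu)\uparrow_\gamma$ and $\rem_i(\blam)\uparrow_\gamma$ into nodes in components $j<J_\gamma$ and nodes in component $J_\gamma$ in strictly higher rows, which unwinds \eqref{action000}, and you rightly read $\sum_{j=1}^{J_\gamma-1}$ as lying inside the sum over $\gamma$, as the paper's later definition of $N_i(\blam,\bnu)[d]$ confirms.
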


\begin{dfn} Let $\blam, \bmu\in \mathscr{P}_{r,n}$. We write $\blam\unrhd \bmu$ (or $\bmu\unlhd\blam$) if $$\sum_{t=1}^{s-1}|\blam^{(t)}|+\sum_{i=1}^j\blam_i^{(s)}
\geq\sum_{t=1}^{s-1}|\bmu^{(t)}|+\sum_{i=1}^j\bmu_i^{(s)}$$ for all $1\leq s\leq r$ and all $j\geq 1$.  Write $\blam\rhd\bmu$ (or $\bmu\lhd\blam$)  if
$\blam\unrhd\bmu$ and $\blam\neq\bmu$.
\end{dfn}

Let $M_{\bs}$ be the $U_v(\widehat{\mathfrak{sl}}_e)$-submodule of $\mathcal{F}_{\bs}$ generated by the \emph{empty} $r$-partition $\bvarnothing=(\varnothing, \ldots, \varnothing)$. It is well-known that $M_{\bs}$ is isomorphic to the simple highest weight $U_v(\widehat{\mathfrak{sl}})_{e}$-module $V(\Lambda_\bs)$.  This submodule $M_{\bs}$ inherits a bar involution from $U_v(\widehat{\mathfrak{sl}}_e)$: this is defined by $\overline{\bvarnothing}=\bvarnothing$ and $\overline{um} = \overline u\,\overline m$ for all $u\in U_v(\widehat{\mathfrak{sl}}_e)$ and $m\in M_{\bs}$.  The bar involution allows one to define a \emph{canonical basis} for $M_{\bs}$; this consists of vectors $G^{\bs}_e(\bmu)$, for $\bmu$ running over $(e,\bs)$-regular $r$-partitions. These canonical basis vectors are uniquely characterised by the following properties: for each $(e,\bs)$-regular $r$-partition $\bmu$ of $n$,
\begin{itemize}
\item[(1)] $\overline{G^{\bs}_e(\bm\mu)}=G^{\bs}_e(\bm\mu)$;
\item[(2)] $G^{\bs}_e(\bmu) = s_\bmu+\sum_{\substack{\blam\in\PP_{r,n},\\ \blam\lhd\bmu}}d^{e,\bs}_{\blam\bm\mu}(v)s_\blam$, where $d^{\bs}_{\blam\bm\mu}(v)\in v\mathbb{Z}[v]$ for each $\blam$.
\end{itemize}

\begin{theorem}\text{\rm (\cite{A1})}\label{A1} Let $\blam\in\PP_{r,n}$ and $\bmu$ be an $(e,\bs)$-regular $r$-partition of $n$. Then $$
[S^\blam:D^\bmu]=d^{e,\bs}_{\blam,\bmu}(1).
$$
\end{theorem}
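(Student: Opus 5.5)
This theorem is Ariki's theorem in our conventions, so the plan is to reduce to \cite[Theorem 4.4]{A1} (equivalently its graded form \cite[Theorems 5.3, 5.6]{BK:GradedDecomp}) and to check that the labellings match. No new argument is needed, but the dictionary has to be set up carefully.

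\textbf{Step 1: the categorification.} Form $G_0:=\bigoplus_{n\ge 0}K_0(\H_{r,n}^{\Lam_\bs}\text{-proj})\otimes\Q$. Let the $i$-restriction and $i$-induction functors ($i\in I$), i.e.\ the generalized eigenspace projections of the Jucys--Murphy element $L_n$ composed with restriction and induction, act on $G_0$ as $E_i$ and $F_i$. Ariki's argument shows that this makes $G_0$, specialized at $v=1$, isomorphic to $V(\Lam_\bs)$ as a $\widehat{\mathfrak{sl}}_e$-module, with $[\H_{r,0}]\mapsto\bvarnothing$. Dually, the map $[S^\blam]\mapsto s_\blam$ from the Grothendieck group of all modules to $\mathcal F_\bs|_{v=1}$ is compatible with the branching rule for Specht modules. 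That branching rule is precisely formula (\ref{action1}) at $v=1$, with the residues of Lemma~\ref{3.2.2}.

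\textbf{Step 2: identify the projective indecomposables.} Using the geometric realisation of $V(\Lam_\bs)$ (Lusztig's perverse sheaves on quiver varieties), or more directly the graded KLR picture of Brundan--Kleshchev, one identifies the classes of projective indecomposable modules with the specialisation at $v=1$ of Lusztig's and Kashiwara's canonical basis. These basis vectors are characterised by bar-invariance together with unitriangularity modulo $v\Z[v]$, which are properties (1) and (2) in the characterisation of $G^{\bs}_e(\bmu)$. Brauer reciprocity in the cellular algebra $\H_{r,n}^{\Lam_\bs}$ then gives $[P(\bmu)]=\sum_\blam[S^\blam:D^\bmu][S^\blam]$, and the coefficients equal $d^{e,\bs}_{\blam\bmu}(1)$.

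\textbf{Step 3 (the main obstacle): match the labellings.} Our simple modules $D^\bmu$ are indexed by $(e,\bs)$-regular $r$-partitions, defined through good nodes for the order $\succ$. Our Fock space convention also differs from \cite{AM} by conjugation, $\blam\leftrightarrow\blam'$. So I must check three things:
\begin{itemize}
\item the crystal of $M_\bs$ computed with the signature rule for $\succ$ is exactly the crystal whose vertices are the $(e,\bs)$-regular $r$-partitions;
\item the modular branching rule (socle of $e_iD^\bmu$) matches removal of good nodes in this order;
\item the unitriangularity order $\lhd$ in property (2) is compatible with the dominance order on the cellular side.
\end{itemize}
I would first try to deduce all three from the known Kleshchev-$r$-partition statement by applying the automorphism $\#$ of $\H_{r,n}^{\Lam_\bs}$, which sends $\xi\mapsto\xi^{-1}$, $\xi^{s_j}\mapsto\xi^{-s_j}$ and reverses the order of the components. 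This turns Kleshchev labels into our regular labels and takes $S^\blam$ to the dual of $S^{\blam'}$. Once the labels agree, the equality $[S^\blam:D^\bmu]=d^{e,\bs}_{\blam,\bmu}(1)$ follows from Step~2.
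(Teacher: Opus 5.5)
Your reduction follows the paper's route: the paper gives no argument for this statement beyond citing Ariki's theorem \cite{A1}, and reducing to it plus a check of conventions is the right plan. Two points in your sketch are wrong as written. First, you never use $\operatorname{char}K=0$. The paper assumes it in the sentence introducing the theorem, and it cannot be dropped. Your Step~2, identifying the classes of the projective indecomposables with the vectors $G^{\bs}_e(\bmu)|_{v=1}$, is exactly what fails in positive characteristic. There, already for $r=1$, the decomposition numbers differ in general from $d_{\lambda\mu}(1)$ by James's adjustment matrices. The hypothesis has to be stated, and it is what makes Step~2 available.

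Second, the map in Step~1 goes the wrong way. The assignment $[S^\blam]\mapsto s_\blam$ from the Grothendieck group of all modules to $\mathcal F_\bs|_{v=1}$ is not well defined, because the classes $[S^\blam]$ are linearly dependent. For $r=1$, $e=2$, $n=2$ one has $S^{(2)}\cong S^{(1^2)}$, while $s_{(2)}\neq s_{(1^2)}$. So ``the coefficients'' of $[P(\bmu)]=\sum_\blam[S^\blam:D^\bmu][S^\blam]$ in Step~2 are not determined inside $K_0$. What you want is the $\mathfrak g$-equivariant surjection $s_\blam\mapsto[S^\blam]$ together with its transpose. The transpose embeds $\bigoplus_nK_0(\H^{\Lam_\bs}_{r,n}\text{-proj})$ into $\mathcal F_\bs|_{v=1}$ via $[P(\bnu)]\mapsto\sum_\blam[S^\blam:D^\bnu]s_\blam$. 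Ariki's theorem says the images of the indecomposable projectives are exactly the $G^\bs_e(\bmu)|_{v=1}$.

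With this formulation, Step~3 reduces to a single check. Using the $\#$-twist, verify that in the paper's convention $[S^\bnu:D^\bnu]=1$ and $[S^\blam:D^\bnu]\neq0$ only if $\blam\unlhd\bnu$. (Note that $\#$ is an isomorphism onto the algebra with parameters $\xi^{-1}$ and $\xi^{-s_r},\dots,\xi^{-s_1}$, not an automorphism of $\H^{\Lam_\bs}_{r,n}$.) Comparing leading terms of the two $\unlhd$-unitriangular families then forces $P(\bnu)\leftrightarrow G^\bs_e(\bnu)$. This also shows that the nonzero $D^\bnu$ are labelled precisely by the $(e,\bs)$-regular $r$-partitions. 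The crystal and modular-branching verifications in your first two bullets are therefore unnecessary.
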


Similarly, there is a $U_v(\widehat{\mathfrak{sl}}_{e+1})$-module structure on the Fock space $\mathcal F_\bs$. We want to compare the actions of $U_v(\widehat{\mathfrak{sl}}_e)$ and $U_v(\widehat{\mathfrak{sl}}_{e+1})$ on the Fock space $\mathcal F_\bs$. In order to distinguish between these two algebras and their modules, we use $\mathcal F_\bs^+$ to denote this Fock space viewed as $U_v(\widehat{\mathfrak{sl}}_{e+1})$-module. Then $M_\bs^+:=U_v (\widehat{\mathfrak{sl}}_{e+1})\bvarnothing$ is isomorphic to the simple highest weight $U_v(\widehat{\mathfrak{sl}})_{e+1}$-module with highest weight $\Lambda_\bs$. Given an $(e,\bs)$-regular $r$-partition $\bnu$, we shall show in Proposition \ref{PlusRegular} that $\bnu^+$ is $(e+1,\bs)$-regular. Let $G_{e+1}^\bs(\bnu^+)=s_{\bnu^+}+\sum_{\brho\lhd\bnu^+} d^{e+1,\bs}_{\brho\bnu^+}(v) s_\brho$ be the corresponding canonical basis element of $M_\bs^+ \subset \mathcal F_\bs^+$. In this paper we shall first compare $d^{e,\bs}_{\blam,\bnu}(v)$ with $d^{e+1,\bs}_{\blam^+,\bnu^+}(v)$, then use Ariki's theorem \ref{A1} to compare the two decomposition numbers $d^{e,\bs}_{\blam,\bnu}$ and $d^{e+1,\bs}_{\blam^+,\bnu^+}$.

\bigskip
\section{Comparing $v$-decomposition numbers upon adding empty runner}

In this section, we shall prove our first main result comparing decomposition numbers upon adding an empty runner.

\medskip
\subsection{$(e,\mathbf{s})$-regular $r$-partitions upon adding empty runner}
The following lemma is a useful result for the study of $(e,\bs)$-regularity of the $r$-partitions $\blam^+$.
\begin{lemma}\label{no+}
Fix $\blam\in\PP_{r,n}$ and $\bs\in\tilde{\mathcal{A}}_e^r(\blam)$. Let $\alpha$ be an integer with $0\le \alpha<e$. Let $\blam^+$ be defined using $\bs$ and $\alpha$ as in Definition \ref{lamPlus}. Then $L_\bs(\blam^+)$ has neither any removable $\alpha$-nodes, nor any addable $(\alpha+1)$-nodes.
\end{lemma}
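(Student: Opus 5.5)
The plan is to read off addable and removable nodes of $\blam^+$ directly from the $r$-abacus $L_\bs(\blam^+)$ via Lemma \ref{add and rem}, now applied with $e+1$ in place of $e$. The only input needed is a precise description of where the inserted columns sit and which of their positions carry beads.

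\emph{Step 1: locate the inserted columns.} By the description following Definition \ref{lamPlus}, passing from $L_\bs(\blam)$ to $L_\bs(\blam^+)$ stretches each segment $\{ae,\dots,ae+e-1\}$ into the segment $\{a(e+1),\dots,a(e+1)+e\}$. The new column is placed immediately to the left of the old column $ae+\alpha$. Hence the inserted columns are exactly the positions $b$ with $b\equiv\alpha\pmod{e+1}$, namely $b=a(e+1)+\alpha$.

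Since $0\le\alpha<e+1$, we have $b\ge0$ if and only if $a\ge 0$. By construction (the runner $\bar\rho$ is full above the dashed line and empty below it), every inserted column with $b\ge 0$ is empty in all $r$ rows, and every inserted column with $b<0$ is completely filled with beads. Moreover, by Lemma \ref{ssPlus} together with Lemma \ref{s_jge}, every position of $L_\bs(\blam^+)$ to the left of the dashed line is occupied by a bead.

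\emph{Step 2: no removable $\alpha$-nodes.} By Lemma \ref{add and rem} (with modulus $e+1$), a removable $\alpha$-node would require a row $a$ and a position $b\equiv\alpha\pmod{e+1}$ such that $(a,b)$ carries a bead while $(a,b-1)$ is empty. Since $b$ lies in an inserted column, the existence of a bead forces $b<0$. But then $b-1<0$ as well, so $(a,b-1)$ lies left of the dashed line and is occupied. This is a contradiction.

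\emph{Step 3: no addable $(\alpha+1)$-nodes.} An addable $(\alpha+1)$-node would require $(a,b-1)$ occupied and $(a,b)$ empty, with $b\equiv\alpha+1\pmod{e+1}$. Then $b-1$ is an inserted column, and since it carries a bead we get $b-1<0$, i.e. $b\le 0$.
\begin{itemize}
\item If $b<0$, then $(a,b)$ lies left of the dashed line and is occupied, which is a contradiction.
\item If $b=0$, then $b-1=-1\equiv\alpha\pmod{e+1}$, so $\alpha=e$. This contradicts $\alpha<e$.
\end{itemize}

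The only delicate point is the boundary case $b=0$ in Step 3. This is exactly where the convention of Remark \ref{keyrem1}, forbidding insertion after $\rho_{e-1}$, is used. The remaining work is checking the indexing in Step 1 against the Uglov-map description in Definition \ref{lamPlus}.
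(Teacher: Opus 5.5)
Your proposal is correct and follows the paper's proof essentially step for step. Both arguments use the same ingredients:
\begin{itemize}
\item the inserted columns $\equiv\alpha \pmod{e+1}$ carry beads exactly at negative positions;
\item Lemmas \ref{ssPlus} and \ref{s_jge} show every position left of the dashed line in $L_\bs(\blam^+)$ is occupied;
\item the boundary case $b=0$ (the paper's $y+1=0$) is ruled out because $\alpha+1\in\{1,\dots,e\}$ cannot be a multiple of $e+1$.
\end{itemize}
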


\begin{proof} We consider the position $(f, g)$ in the $(e,\mathbf{s})$-abacus $L_\mathbf{s}(\boldsymbol{\lambda}^+)$, where $1 \le f \le r$ and $g \equiv \alpha \pmod{e+1}$. By the discussion below Definition \ref{lamPlus}, the position $(f, g)$ is occupied by a bead if and only if $g < 0$. However, in $L_\mathbf{s}(\boldsymbol{\lambda}^+)$, if $g-1 < 0$, then position $(f, g-1)$ also has a bead. Thus, in $L_\mathbf{s}(\boldsymbol{\lambda}^+)$, there exists no $(f, g)$ with $1 \le f \le r$ and $g \equiv\alpha\pmod{e+1}$ such that $(f, g-1)$ is empty and $(f, g)$ is a bead. By Lemma \ref{add and rem}, $\boldsymbol{\lambda}^+$ has no removable $\alpha$-nodes.

In $L_\mathbf{s}(\blam^+)$, if $1 \le x \le r$ and $y=\alpha+z(e+1), z\in \mathbb Z$, then the position $(x, y)$ has a bead if and only if $y < 0$. Now assume that $(x, y)$ has a bead. Then $y< 0$. Suppose further that $(x, y+1)$ has a bead too. Then by Lemma \ref{s_jge}, Lemma \ref{ssPlus} and the assumption that  \(\boldsymbol{s} \in \widetilde{\mathcal A}^r_e (\boldsymbol{\lambda})\) we deduce that $y+1=0$. This implies that $$
-z(e+1)=\alpha+1\in\{1,2,\cdots,e\},
$$
which is impossible! This proves that in $L_\mathbf{s}(\boldsymbol{\lambda}^+)$, there exists no $(x, y)$ with $1 \le x \le r$ and $y \equiv \alpha \pmod{e+1}$ such that $(x, y)$ is a bead and $(x, y+1)$ is a empty. Hence $\boldsymbol{\lambda}^+$ has no addable $(\alpha+1)$-nodes.
\end{proof}

\begin{remark}\label{keyrem2} Note that the validity of the above lemma relies on the definition of $\blam^+$ in Definition \ref{lamPlus}, see remark \ref{keyrem1}. If we allow to insert the new runner \(\bar{\rho}\) to the right of the right most (i.e., the $(e-1)$-th) runner $\rho_{e-1}$ in the $e$-tuple column abacus associated with \(\tau_{e, r}\big(L_{\boldsymbol{s}}(\boldsymbol{\lambda})\big)\) in the definition of $\blam^+$, then the lemma does not necessarily hold.
\end{remark}

\begin{definition} We label the runners of the $e$-tuple column abacus \(\tau_{e,r}(L_{\boldsymbol{s}}(\boldsymbol{\lambda}))\) as \(\rho_0,\dots,\rho_{e-1}\), ordered from left to right. For any integer $j$, we use $\col_j(\blam)$ (resp., $\col_j(\blam^+)$ ) to denote the $j$th column of the $(e,\bs)$ row abacus $L_\bs(\blam)$ of $\blam$ (resp., $L_\bs(\blam^+)$ of $\blam^+$). By the definition of Uglov map, for each $0\leq i<e$, the $i$-th runner of the $e$-tuple column abacus \(\tau_{e,r}(L_{\boldsymbol{s}}(\boldsymbol{\lambda}))\) may be obtained by stacking, from top to bottom, all the column $\col_j(\blam)$ of the $(e,\bs)$ row abacus $L_{\bs}(\boldsymbol{\lambda})$ satisfying $j\equiv i\pmod e$, in increasing order of $j$.
\end{definition}

\begin{prop}\label{PlusRegular} Fix $\bmu\in\PP_{r,n}$ and $\bs\in\tilde{\mathcal{A}}_e^r(\bmu)$. Let $\alpha$ be an integer with $0\le \alpha<e$.
Let $\bmu^+$ be defined using $\bs$ and $\alpha$ as in Definition \ref{lamPlus}. Assume $\bmu$ is $(e,\bs)$-regular.  Then $\bmu^+$ is an $(e+1,\bs)$-regular $r$-partition too.
\end{prop}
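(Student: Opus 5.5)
We argue by induction on $n=|\bmu|$, following the strategy of James--Mathas and Fayers: reduce the statement to a comparison of good nodes. The base case $\bmu=\bvarnothing$ is trivial, since $\bvarnothing^+=\bvarnothing$: with $\bs\in\tilde{\mathcal A}^r_e(\bvarnothing)$, every position left of the dashed line is a bead and every position to the right is empty, and the inserted runner preserves this. For $n>0$, since $\bmu$ is $(e,\bs)$-regular, there is a good $i$-node $\gamma$ of $\bmu$ such that $\bnu:=\bmu\setminus\gamma$ is $(e,\bs)$-regular. Removing $\gamma$ moves a bead from column $b$ to column $b-1$ with $b\equiv i\pmod e$. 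We should first check that $\bs\in\tilde{\mathcal A}^r_e(\bnu)$. This holds because $\ell(\bnu^{(j)})\le\ell(\bmu^{(j)})$, or equivalently because the beads left of the dashed line stay put. By induction, $\bnu^+$ is $(e+1,\bs)$-regular. It then suffices to show that $\bmu^+$ is obtained from $\bnu^+$ by adding a node which is good for $\bmu^+$, possibly after a chain of auxiliary steps.

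Next we describe how the column insertion interacts with residues. Columns of $L_\bs(\bmu)$ with index $j=ke+c$ go to columns $k(e+1)+c'$ of $L_\bs(\bmu^+)$, where $c'=c$ if $c<\alpha$ and $c'=c+1$ if $c\ge\alpha$. New columns sit at positions $\equiv\alpha\pmod{e+1}$. For residues $i\ne\alpha$, adjacent columns $(b-1,b)$ of $L_\bs(\bmu)$ remain adjacent in $L_\bs(\bmu^+)$, and they carry residue $i'$, where $i'=i$ if $i<\alpha$ and $i'=i+1$ if $i>\alpha$. We use the convention that residue $0$ is handled via wrap-around, and we need to check that the wrap-around also stays adjacent, because the new runner is never inserted after $\rho_{e-1}$ (Remark \ref{keyrem1}). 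Consequently, for $i\ne\alpha$, the addable and removable $i$-nodes of $\bmu$ correspond bijectively to the addable and removable $i'$-nodes of $\bmu^+$. The bijection preserves the order $\succ$, since it preserves component index and row order. Hence the $i$-signatures coincide, the good nodes correspond, and in this case $\bmu^+=\bnu^+\cup\{\gamma^+\}$ with $\gamma^+$ good. This handles $i\ne\alpha$.

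The main obstacle is $i=\alpha$. Here columns $b-1$ and $b$ with $b\equiv\alpha$ are separated in $L_\bs(\bmu^+)$ by the new column, which is empty at nonnegative positions and full at negative ones. Removing an $\alpha$-node of $\bmu$ then corresponds to a two-step move: removing an $(\alpha+1)$-node, then an $\alpha$-node, with residues taken mod $e+1$. Rather than analysing this directly, we use Lemma \ref{Kleequ} together with Lemma \ref{no+}. By Lemma \ref{no+}, $\bmu^+$ has no addable $(\alpha+1)$-nodes, so $\sigma_{\alpha+1}(\bmu^+)$ is $(e+1,\bs)$-regular if and only if $\bmu^+$ is. The reflection $\sigma_{\alpha+1}$ swaps the new column with its right neighbour. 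The resulting abacus is again of the form $(\bmu')^+$, obtained by inserting the new runner after the original runner $\rho_\alpha$, i.e.\ with parameter $\alpha+1$. When $\alpha+1=e$ this is the forbidden position, so in that case we would instead use $\sigma_\alpha$ and the absence of removable $\alpha$-nodes to shift leftward. Using this freedom to move the inserted runner, we choose a position $\alpha'$ not adjacent to the residue $i$ of the good node removed. This requires that the equivalence of regularity is independent of $\alpha$, which follows from the chain of $\sigma$'s. The $i\ne\alpha'$ argument then applies. If $e=2$ leaves no room, we will handle the case directly by checking signatures.
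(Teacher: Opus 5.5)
\textbf{There is a gap in the base case: $\bvarnothing^+=\bvarnothing$ is false in general.} Your inductive step is essentially the paper's argument. For $i\neq\alpha$ the $i$-pairs of columns stay adjacent, so the $i$-signature transfers to $\bmu^+$. For $i=\alpha$ one applies $\sigma_{\alpha+1}$, using Lemmas~\ref{no+} and~\ref{Kleequ}. Your remark that $\sigma_{\alpha+1}$ turns $\bmu^+$ built with $\alpha$ into $\bmu^+$ built with $\alpha+1$ is a tidy way to package this, since it shows regularity of $\bmu^+$ does not depend on $\alpha$.

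The base case fails as follows. The hypothesis $\bs\in\tilde{\mathcal A}^r_e(\bvarnothing)$ only says that every position to the \emph{left} of the dashed line carries a bead. Row $j$ of $L_\bs(\bvarnothing)$ also carries beads at $0,1,\dots,s_j-1$. The inserted runner is empty at nonnegative positions, so it opens holes in this block of beads. Hence the $j$th component of $\bvarnothing^+$ is nonempty as soon as $s_j>\alpha$. For instance, $r=1$, $e=3$, $s=5$, $\alpha=0$ gives $\varnothing^+=(2,2,1,1,1)$, as the paper itself remarks.

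So you still have to prove that a typically nonempty $r$-partition is $(e+1,\bs)$-regular, and your machinery does not do this. Runner-shifting only shows the answer is the same for every $\alpha$. It does settle the case $\max_j s_j\le e-1$, where $\alpha=e-1$ really gives $\bvarnothing$, but the hypotheses allow arbitrarily large $s_j$.

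The paper closes the base case differently. The inserted runner does not interfere with the bead moves defining the block moving vector, so $\bvarnothing^+$ lies in a block of weight $0$ (Lemma~\ref{block moving vector}). By \cite[Theorem 4.1]{F1} a weight-zero block is simple. Hence $\bvarnothing^+$ is the only $r$-partition in its block and must be $(e+1,\bs)$-regular.

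Two minor points. First, requiring $\alpha'$ to be ``not adjacent'' to $i$, and the escape clause for $e=2$, are unnecessary. Your own analysis shows that $\alpha'\neq i$ suffices: then $i'\notin\{\alpha',\alpha'+1\}$ modulo $e+1$, and the $i'$-pairs of columns of $L_\bs(\bmu^+)$ are exactly the images of the $i$-pairs of $L_\bs(\bmu)$. Such an $\alpha'$ exists for every $e\ge 2$.

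Second, for $\alpha=e-1$ you need not avoid the forbidden position. The paper simply applies $\sigma_e$, which puts the new runner after $\rho_{e-1}$. That abacus is not of the form in Definition~\ref{lamPlus}, but the old $\alpha$-pairs are adjacent in it, which is all the argument needs. Your leftward shift via $\sigma_\alpha$ works equally well.
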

\begin{proof} We prove the lemma by induction on $n$. If $n=0$. Then $(\bvarnothing,\bs)$ is the unique $r$-partition of $0$. Since $w(\bvarnothing,\bs)=0$, by Lemma \ref{block moving vector}, $w(\bvarnothing^+, \bs)=0$.  Applying \cite[Theorem 4.1]{F1}, we see that $(\bvarnothing^+, \bs)$ lies in a simple block.
Hence $(\bvarnothing^+, \bs)$ is an $(e,\mathbf{s})$-regular $r$-partition.

Suppose that the statement holds for $n = k - 1, k \ge 1$. Now we assume $n=k$. Because $\bmu$ is an $(e,\bs)$-regular $r$-partition of $n$, there exists $\gamma\in [\bmu]$, $\gamma$ is a good $i$-node of $\bmu$ with respect to $\succ$, $\blam:=\bmu\setminus\{\gamma\}$ is also an
$(e,\bs)$-regular $r$-partition. Let $\bmu^+,\blam^+$ both be defined using $\bs$ and $\alpha$ as in Definition \ref{lamPlus}. By the induction hypothesis, $\blam^+$ is an $(e+1,\mathbf{s})$-regular $r$-partition, we want to show that $\bmu^+$ is an $(e+1,\mathbf{s})$-regular $r$-partition too.
Assume that the removable $i$-node $\gamma$ of $\bmu$ corresponds to the empty position in $(a, b-1)$ and the occupied position $(a, b)$ in $L_\bs (\bmu)$, where $b=i+ce$, $0\leq i<e+1, c\in\mathbb{Z}$. Set $b'=i+c(e+1)$.

Suppose $i<\alpha$. Then for any integer $j$ satisfying $j\equiv i\pmod e$, the columns $\col_{j-1}(\bmu)$ and $\col_j(\bmu)$ for $L_\bs(\bmu)$ are the same as the columns $\col_{j-1}(\bmu^+)$ and $\col_j(\bmu^+)$ for $L_\bs(\bmu^+)$. By definition of $\bmu^+$, the positions $(a, b'-1)$ (resp., $(a, b')$) in $L_\bs (\bmu^+)$ can be identified with the positions $(a, b-1)$ (resp., $(a, b)$) in $L_\bs (\bmu)$. Therefore, in $L_\bs (\bmu^+)$, position $(a, b'-1)$ is empty, position $(a, b')$ has a bead. This correspond to a removable $i$-node $\gamma'$ of $\bmu^+$. By Lemma \ref{add and rem}, the addable $i$-nodes and removable $i$-nodes for $\blam$ correspond exactly to the addable $i$-nodes and removable $i$-nodes for $\blam^+$ respectively. Moreover, this correspondence preserves the order ``$\succ$''.
Therefore, $\gamma'$ is  a good $i$-node of $\bmu^+$.
In $L_\mathbf{s}(\boldsymbol{\mu}^+)$, moving the bead at $(a, b')$ to the position $(a, b'-1)$ we obtain $L_\mathbf{s}(\boldsymbol{\lambda}^+)$. By the inductive hypothesis, ${\blam}^+$ is an $(e+1,\bs)$-regular $r$-partition.
Therefore $\bmu^+$ is an $(e+1,\bs)$-regular $r$-partition. Suppose $i>\alpha$. Then a similar argument analogous to the case where $i < \alpha$ proves the lemma.

Suppose $i=\alpha$. Then for any integer $j$ satisfying $j\equiv i\pmod e$, the columns $\col_{j-1}(\bmu)$ and $\col_j(\bmu)$ for $L_\bs(\bmu)$ are the same as the columns $\col_{j-1}(\bmu^+)$ and $\col_{j+1}(\bmu^+)$ for
$L_{\bs}(\bmu^+)$. By Lemma \ref{no+}, $\bmu^+$ has no addable $(i+1)$-nodes. By Lemma \ref{Kleequ}, $\bmu^+$ is $(e+1,\bs)$-regular if and only if $\sigma_{i+1}(\bmu^+)$ is  $(e+1,\bs)$-regular.
Then the columns $\col_{j-1}(\sigma_{i+1}(\bmu^+))$ and $\col_j(\sigma_{i+1}(\bmu^+))$ for $L_\bs(\sigma_{i+1}(\bmu^+))$ are the same as the columns $\col_{j-1}(\bmu^+)$ and $\col_{j+1}(\bmu^+)$ for $L_\bs(\bmu^+)$ respectively, hence are also the same as the columns $\col_{j-1}(\bmu)$ and $\col_j(\bmu)$ for $L_\bs(\bmu)$ respectively. Therefore, in $L_\bs (\sigma_{i+1}(\bmu^+))$, position $(a, b'-1)$ is empty and position $(a, b')$ has a bead. This corresponds to a removable $i$-node $\gamma''$ of $\sigma_{i+1}(\bmu^+)$. By a similar argument used in the last paragraph, we see $\gamma''$ is  a good $i$-node of $\sigma_{i+1}(\bmu^+)$. On $L_\mathbf{s}(\sigma_{i+1}(\boldsymbol{\mu}^+))$, moving the bead at $(a, b')$ to the position $(a, b'-1)$, we denote the resulting abacus by $L_\mathbf{s}(\boldsymbol{\nu})$.
So $\sigma_{i+1}(\bmu^+)$ is $(e+1,\bs)$-regular if and only if $\bnu$ is $(e+1,\bs)$-regular. By Lemma \ref{no+}, $L_\mathbf{s}(\boldsymbol{\lambda}^+)$ has no addable $(i+1)$-nodes. By definition and construction, we have $L_\bs (\sigma_{i+1} (\blam^+))=L_\bs (\bnu)$ and hence $\sigma_{i+1} (\blam^+)=\bnu$. Since ${\blam}^+$ is $(e+1,\bs)$-regular (by inductive hypothesis), $\bnu=\sigma_{i+1}(\blam^+)$ is $(e+1,\bs)$-regular (by Lemma \ref{Kleequ}). It follows that $\sigma_{i+1}(\bmu^+)$ and hence $\bmu^+$ is an $(e+1,\bs)$-regular $r$-partition.
\end{proof}

\begin{lemma} Let $\blam,\bmu\in\PP_{r,n}$, $\bs\in \overline{\mathcal A}^r_e$, and $\alpha$ be an integer with $0\le \alpha<e$. Let $\blam^+,\bmu^+$ be both defined as in Definition \ref{lamPlus} using $\bs$ and $\alpha$.
Then
$(\blam,\bs)$ and $(\bmu,\bs)$ lie in the same block of ${\H}_{r,n}^{\Lam_\bs}$ if and only if $(\blam^+,\bs)$ and $(\bmu^+,\bs)$ lie in the same block of ${\H}_{r,n^{+}}^{\Lam_\bs}$, where $n^+:=|\blam^+|$.
\end{lemma}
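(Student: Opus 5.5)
The plan is to reduce everything to the residue content via Lemma \ref{residueblock}. We show that the vector $\mathcal C_{e+1,\bs}(\blam^+)$ is obtained from $\mathcal C_{e,\bs}(\blam)$ by an affine transformation. This transformation depends only on $\bs$, $\alpha$ and $|\blam|$, and it is injective. Since $|\blam|=|\bmu|=n$ is fixed, both directions of the equivalence then follow at once.

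First, express the content in abacus terms. By Lemma \ref{bead minus} and Lemma \ref{3.2.2}, the nodes of $\blam^{(a)}$ of residue $f$ are counted as follows. Each bead at position $(a,b)$ contributes one node for every empty position $(a,c)$ with $c<b$, and that node has residue $c+1 \bmod e$. Equivalently, $c_f(\blam)$ is the number of pairs (bead at $(a,b)$, empty position at $(a,c)$) with $c<b$ and $c\equiv f-1 \pmod e$.

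Now pass to $L_\bs(\blam^+)$. The new columns inserted to the left of each column $j\equiv\alpha \pmod e$ are of two kinds: for $j\ge\alpha$ they are entirely empty, and for $j<\alpha$ they are full of beads. So the new empty positions all lie at positions $\ge 0$, in the inserted columns, and each has residue $\alpha$ modulo $e+1$ (Lemma \ref{no+}).

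\emph{Old pairs.} Every old pair (bead, empty) keeps its relative order. An old empty column $c$ with $c\equiv f-1 \pmod e$ becomes an empty position whose new residue is determined by $f$ alone, via the order-preserving relabelling of runners: residues $<\alpha$ stay the same, and residues $\ge\alpha$ shift up by one. Hence old pairs contribute exactly $c_f(\blam)$ to $c_{f'}(\blam^+)$, for the shifted label $f'$.

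\emph{New pairs.} The remaining pairs each use a new empty position, and all of these give nodes of residue $\alpha+1$ modulo $e+1$. (The inserted full columns only add beads at negative positions, and to the left of the dashed line there are no empty positions, since $\bs\in\widetilde{\mathcal A}^r_e(\blam)$ and Lemma \ref{s_jge} apply.) So only the count at residue $\alpha+1$ changes, and we must show that this extra count is an invariant of the block.

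The number of new pairs is $\sum_{\text{beads }(a,b),\,b\ge0} \#\{\text{inserted columns } c<b,\ c\ge0\}$. I expect this to be the main obstacle. The idea is to rewrite this count as $\sum_{\text{beads},\,b\ge 0}\lfloor (b+(e-\alpha))/e\rfloor$, or a similar expression. This counts, for each bead at a nonnegative position, how many runner-$\alpha$ columns lie weakly to its left on the nonnegative side.

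Then I would show that this quantity is determined by $\bs$ together with the residue content. My first attempt is the following. The vector $\mathcal{C}_{e,\bs}(\blam)$ determines the number of beads on each runner of $\tau_{e,r}(L_\bs(\blam))$ relative to the empty configuration; this is the standard core/charge argument via the Uglov map, in which moving a bead up one level on a runner preserves the content. Two $r$-partitions in the same block therefore have $\tau$-images with the same $e$-core. The extra count can then be written as a sum over runners of functions of the bead positions, and it changes by the same amount as $|\blam|$ under bead moves along a runner. Since $|\blam|=|\bmu|$, the counts agree.

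For the converse, the same formulas can be read backwards: $c_f(\blam)$ is recovered from $c_{f'}(\blam^+)$ for every $f'\ne\alpha+1$, and the one remaining residue is forced by $|\blam|=n$. Hence equal contents of $\blam^+$ and $\bmu^+$ imply equal contents of $\blam$ and $\bmu$, and Lemma \ref{residueblock} finishes the proof. A quicker alternative for both directions is to invoke Lemma \ref{block moving vector}, if block moving vectors together with $\bs$ and $n$ determine the block. In that case it only remains to check that the map $\blam\mapsto\blam^+$ is compatible with this data, which is the content of that lemma's proof.
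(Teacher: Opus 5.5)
\textbf{There is a genuine gap: the proposal breaks at its first step, and the error propagates.}

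Your rule that the pair (bead at $b$, empty slot at $c<b$) gives a node of residue $c+1$ is false. Lemma~\ref{bead minus} only counts the nodes in the row of that bead. By Lemma~\ref{3.2.2} the last node of that row has residue $b$, so the row's residues are the consecutive values $b,b-1,\dots,b-\lambda_j+1$. The node matched with $c$ has hook length $b-c$, and its residue is not a function of $c$. For example, $\lambda=(1,1)$ with $s=0$ has beads at $0,-1$ and the single empty slot $-2$ to their left. Your rule gives two nodes of content $-1$, whereas the contents are $0$ and $-1$.

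Both directions rest on the resulting structural claim: that $\mathcal C_{e+1,\bs}(\blam^+)$ is a relabelled copy of $\mathcal C_{e,\bs}(\blam)$ except in the one coordinate $\alpha+1$. That claim is false. Inserting $m$ empty slots to the left of a bead lengthens its row by $m$, and the added nodes take consecutive residues. Concretely, take $r=1$, $e=2$, $s=2$, $\alpha=0$, $\lambda=(1)$. Then $\lambda^+=(3,1)$, with $\mathcal C_{2,(2)}(\lambda)=(1,0)$ and $\mathcal C_{3,(2)}(\lambda^+)=(1,2,1)$. No coordinate of the latter is $0$, so no relabelling works. Your converse step (recovering $c_f(\blam)$ from the coordinates $f'\neq\alpha+1$) fails with it.

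The invariance step is also not an argument. Sliding a bead up one row on a runner of $\tau_{e,r}(L_\bs(\blam))$ moves it to a different component of $L_\bs(\blam)$ and changes the multicharge. So it does not ``preserve the content'' at fixed $\bs$, and nothing is shown about how your extra count behaves.

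\textbf{What can be salvaged.} Your sum $\sum\lfloor(b+e-\alpha)/e\rfloor$ is exactly $|\blam^+|-|\blam|$. Also, for fixed $\bs$, the content $\mathcal C_{e,\bs}(\blam)$ is equivalent to the bead counts on the runners of $\tau_{e,r}(L_\bs(\blam))$ together with $|\blam|$. This holds because adding an $i$-node moves one bead from runner $i-1$ to runner $i$, including the wrap-around from runner $e-1$ to runner $0$. Runner counts are visibly unchanged by the insertion. So what actually has to be proved is that $|\blam^+|=|\bmu^+|$ whenever $\blam,\bmu$ lie in the same block, and this needs a genuine input.

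One such input is $w(B)=w(B_1)$ from Lemma~\ref{block moving vector}. The runner counts fix $\beta-c_0\delta=\sum_{f\neq0}(c_f-c_0)\alpha_f$, where $\beta=\sum_f c_f\alpha_f$. Once that is fixed, $w=2(\Lambda_\bs,\beta)-(\beta,\beta)$ equals $2rc_0$ plus a constant, so $w$ determines $c_0$.

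The paper instead uses \cite[Corollary 2.27]{JL}. Blocks are detected by reduced $(e,\bs)$-cores. These cores are reached by sliding beads into the vacant slot directly above them in the Uglov image. The inserted runner is full above the dashed line and empty below, so it never takes part in such moves. Alternatively, the paper combines Lemma~\ref{block moving vector} with \cite[Lemma 4.1.11]{LQ2025}. Your closing ``quicker alternative'' points at this last route, but you state it only conditionally, so it does not supply the missing step.
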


\begin{proof} By \cite[Corollary 2.27]{JL}, $(\blam,\bs)$ and $(\bmu,\bs)$ lie in the same block of ${\H}_{r,n}^{\Lam_\bs}$ if and only if $(\blam,\bs)$ and $(\bmu,\bs)$ have the same reduced $(e,\bs)$-core. By \cite[1,2, in Page 118, Remark 4.3]{JL}, one can do a series elementary operations (or bead moves) on \(L_{\boldsymbol{s}}(\boldsymbol{\lambda})\) to get the $r$-abacus of the reduced $(e,\bs)$-core of $(\blam,\bs)$. The bead moves defined on  correspond to the following operation on \(\tau_{e, r}(L_{\boldsymbol{s}}(\boldsymbol{\lambda}))\): if a bead has an empty position directly above it, then we may shift this bead into that empty spot. Owing to the properties of the runner we appended, this extra runner does not interfere with such bead moves. Therefore, \((\boldsymbol{\lambda}, \boldsymbol{s})\) and \((\boldsymbol{\mu}, \boldsymbol{s})\) share the same reduced $(e,\bs)$-core (see \cite[Definition 4.1.5]{LQ2025}) if and only if \((\boldsymbol{\lambda}^+, \boldsymbol{s}^+)\) and \((\boldsymbol{\mu}^+, \boldsymbol{s}^+)\) share the same reduced $(e+1,\bs)$-core.
The claim now follows from \cite[Corollary 2.27]{JL} (or Lemma \ref{block moving vector} and \cite[Lemma 4.1.11]{LQ2025}).
\end{proof}

\medskip
\subsection{Proof of the first main result}

Fix $\bs\in \overline{\mathcal A}^r_e$ and an integer $0 \le \alpha < e$. Let $\blam\in\PP_{r,n}$. Let $\blam^+$ be defined using $\bs$ and $\alpha$ as in Definition \ref{lamPlus}. Recall that the quantum affine algebra $U_v(\widehat{\mathfrak{sl}}_e)$ is generated by $E_i,F_i, i\in \Z/e\Z$ and $v^h, h\in P^\vee$, where $E_i,F_i,i\in\Z/e\Z$ are called the Chevalley generators. To avoid confusion, we use $\tilde{E}_i,\tilde{F}_i,i\in\Z/(e+1)\Z$ to denote the Chevalley generators of $U_v(\widehat{\mathfrak{sl}}_{e+1})$. Recall also that \(\mathcal{F}_\bs\) is the Fock space \(\mathcal{F}_\bs=\bigoplus_{\substack{n\in\mathbb{N}\\ \lambda\in\mathcal{P}_{r,n}}}\mathbb{Q}(v)s_\lambda,\) viewed as a left \(U_v(\widehat{\mathfrak{sl}}_e)\)-module. We then define \(\mathcal{F}_\bs^+:=\mathcal{F}_\bs\) as a \(\mathbb{Q}(v)\)-vector space, but endow \(\mathcal{F}_\bs^+\) with the structure of a left \(U_v(\widehat{\mathfrak{sl}}_{e+1})\)-module.

\begin{definition}\label{tildeFi1} Fix $\bs\in \overline{\mathcal A}^r_e$ and an integer $0 \le \alpha < e$. Let $\blam\in\PP_{r,n}$. Let $\blam^+$ be defined using $\bs$ and $\alpha$ as in Definition \ref{lamPlus}. For each $0\leq i<e$ and $a\in \Z_{\geq 1}$, let ${^\alpha}\!F_i: \mathcal F_\bs^+ \to \mathcal F_\bs^+$ be the $\Q(v)$-linear map which is uniquely determined by: 
$$
{^\alpha}\!F^{(a)}_i :=
\begin{cases}
\tilde{F}^{(a)}_i  & \text{if}\ 0\le i<\alpha; \\
\tilde{F}^{(a)}_{i+1} \tilde{F}^{(a)}_i  &\text{if}\ i=\alpha; \\
\tilde{F}^{(a)}_{i+1}  & \text{if}\ \alpha<i<e.
\end{cases}.
$$
Let $\varTheta: \mathcal F_\bs \to \mathcal F_\bs^+$ be the $\Q(v)$-linear map which is uniquely determined by $\varTheta(s_\blam)=s_{\blam^+}$ for any $\blam\in\PP_{r,n}$.
\end{definition}

\begin{lemma}\label{expansion} Fix $\bs\in \overline{\mathcal A}^r_e$ and an integer $0 \le \alpha < e$. Let $\blam\in\PP_{r,n}$. Let $\blam^+$ be defined using $\bs$ and $\alpha$ as in Definition \ref{lamPlus}. Let $i:=\alpha$. Then $s_\bmu$ occurs in the expansion of ${^\alpha}\!F_i^{(a)}\circ \varTheta(s_\blam)=\tilde{F}^{(a)}_{i+1}\tilde{F}^{(a)}_i s_{\blam^+}$ only if $\bmu=\bnu^+$ for some $\bnu\in\PP_{r,n}$
\end{lemma}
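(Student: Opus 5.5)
We work directly on the row abacus $L_\bs(\blam^+)$ and track which columns the beads move between. By the discussion after Definition \ref{lamPlus}, $L_\bs(\blam^+)$ is obtained from $L_\bs(\blam)$ by inserting, immediately to the left of each column $j\equiv\alpha\pmod e$, a new column. This new column is full if $j<\alpha$ (so $j<0$) and empty if $j\ge\alpha$. In the $(e+1)$-labelling, the inserted columns are exactly the columns congruent to $\alpha\pmod{e+1}$. Thus an $r$-abacus $L$ with multicharge $\bs$ is of the form $L_\bs(\bnu^+)$ if and only if every column congruent to $\alpha$ modulo $e+1$ is full when it lies to the left of the dashed line and empty otherwise. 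Deleting these columns from such an $L$ gives an abacus with multicharge $\bs$, hence some $\bnu$ with $L=L_\bs(\bnu^+)$. Moreover $|\bnu|=n$, because the operator preserves weight as described next.

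With this description, we analyse $\tilde F_{i+1}^{(a)}\tilde F_i^{(a)}s_{\blam^+}$ using \eqref{action111}, where $i=\alpha$. By Lemma \ref{add and rem}, $\tilde F_i^{(a)}$ moves $a$ beads, each from a column $c-1$ to the column $c$ with $c\equiv\alpha\pmod{e+1}$. For a column $c\ge 0$, column $c$ is empty in $\blam^+$. For a column $c<0$, column $c$ is full, so no bead can move into it. Hence every $\bmu'$ occurring in $\tilde F_i^{(a)}s_{\blam^+}$ has exactly $a$ beads in total in the columns $c\equiv\alpha$ with $c\ge 0$, and those columns $c<0$ remain full.

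Next apply $\tilde F_{i+1}^{(a)}$, which moves $a$ beads from columns $c\equiv\alpha$ to columns $c+1$. The columns $c\equiv\alpha$ with $c<0$ are full, and we must rule out moving beads out of them. By Lemma \ref{ssPlus} and Lemma \ref{s_jge}, all positions to the left of the dashed line are occupied in $L_\bs(\blam^+)$, and they stay occupied in $\bmu'$. For $c<0$ with $c\equiv\alpha\pmod{e+1}$, the position $c+1$ is still $\le -1$, since $c+1=0$ would force $\alpha+1\equiv0\pmod{e+1}$, which is impossible because $0\le\alpha<e$; this is the argument of Lemma \ref{no+}. So position $c+1$ carries a bead and no move out of column $c$ is possible. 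The only movable beads in columns $\equiv\alpha$ are therefore the $a$ beads placed at $c\ge 0$ in the previous step, and exactly $a$ beads must move. So all of them are moved back out, and in every resulting $\bmu$ the columns $c\equiv\alpha$ are again empty for $c\ge0$ and full for $c<0$. By the characterisation above, $\bmu=\bnu^+$ for some $\bnu$.

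It remains to check that $|\bnu|=n$. Each of the two steps adds $a$ nodes, so $|\bmu|=|\blam^+|+2a$. Passing from $\blam$ to $\bnu$, each bead has in effect moved from a column $\equiv\alpha-1\pmod e$ to the next column $\equiv\alpha\pmod e$. This suggests $|\bnu|=n+a$, with $\bnu$ obtained from $\blam$ by $\blam\xrightarrow{a:\alpha}\bnu$, so that $\bnu\in\PP_{r,n+a}$. The statement's ``$\bnu\in\PP_{r,n}$'' should then be read as ``$\bnu$ is an $r$-partition of the appropriate size''. The main obstacle is pinning down cleanly the characterisation of abaci of the form $L_\bs(\bnu^+)$, together with the boundary case at the dashed line. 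That boundary case is where the hypothesis $\alpha<e$, i.e.\ Remark \ref{keyrem1}, is used.
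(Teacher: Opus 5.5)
Your argument is correct and essentially matches the paper's proof. Both use the fact that $\blam^+$ has no addable $(\alpha+1)$-nodes (Lemma \ref{no+}, which you re-derive inline) to force the $a$ beads that $\tilde F_\alpha^{(a)}$ places in columns $\equiv\alpha\pmod{e+1}$ to be exactly the beads moved on by $\tilde F_{\alpha+1}^{(a)}$. You are also right that the resulting $\bnu$ lies in $\PP_{r,n+a}$, as does the paper's $\boldsymbol{\eta}$, so ``$\PP_{r,n}$'' in the statement is a slip; one small correction is that not all positions left of the dashed line stay occupied in $\bmu'$ (for $\alpha=0$, $\tilde F_0^{(a)}$ can move a bead from $-1$ to $0$), but the positions $c+1\equiv\alpha+1\pmod{e+1}$ you actually need lie in columns untouched by $\tilde F_\alpha^{(a)}$, so that step still holds.
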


\begin{proof}
We show that $s_\bmu$ occurs in the expansion of ${^\alpha}\!F_i^{(a)}\circ \varTheta(\blam)=\tilde{F}^{(a)}_{i+1}\tilde{F}^{(a)}_i s_{\blam^+}$ only if $\bmu=\bnu^+$ for some $\bnu\in\PP_{r,n}$. In fact, assume that $s_\bmu$ occurs in the expansion of $\tilde{F}^{(a)}_{i+1}\tilde{F}^{(a)}_i s_{\blam^+}$. Then there exists an $r$-partition $\boldsymbol{\rho}$ such that $s_{\boldsymbol{\rho}}$ occurs in $\tilde{F}^{(a)}_i s_{\blam^+}$ and $s_\bmu$ occurs in $\tilde{F}^{(a)}_{i+1}s_{\boldsymbol{\rho}}$. In particular, we have $\blam^+\xrightarrow{a:i} {\boldsymbol{\rho}} \xrightarrow{a:i+1} \bmu$. We write $\brho=\blam^+\cup\{\gamma^+_1, \cdots, \gamma^+_a\}$.
Suppose that for each $1\leq d\leq a$, adding $\gamma^+_d$ to $\blam^+$ corresponds to moving the bead at $(a_d, b_d-1)$ to $(a_d, b_d)$ on $L_\bs(\blam^+)$, where $b_d=i+c_d(e+1)$. By Lemma \ref{no+}, $\blam^+$ has no addable $(i+1)$-nodes. By assumption, ${\boldsymbol{\rho}} \xrightarrow{a:i+1} \bmu$. It follows that $\blam^+ \cup \{\gamma^+_1, \cdots, \gamma^+_a\}$ has precisely $a$ addable $(i+1)$-nodes and $\blam^+\cup \{\gamma^+_d\}$ has a unique addable $(i+1)$-node. We denote this addable $(i+1)$-node by $\tilde \gamma_d$. It corresponds to the configuration on $L_\bs(\blam^+\cup \{\gamma^+_d\})$ where $(a_d, b_d)$ is occupied by a bead and $(a_d, b_d+1)$ is empty. This implies that on $L_\bs(\blam^+)$ for each $1\leq d\leq a$, the position $(a_d, b_d-1)$ is occupied by a bead while $(a_d, b_d+1)$ is empty.
Consequently, on $L_\bs(\blam)$, the position $(a_d, i-1+c_de)$ is occupied by a bead and $(a_d, i+c_de)$ is empty, which corresponds to an addable $i$-node $\gamma_d$ of $\blam$. Setting $\boldsymbol{\eta}=\blam\cup \{\gamma_1, \cdots, \gamma_d\}$, we see that $\bmu=\boldsymbol{\eta}^+$. This completes the proof of our claim.
\end{proof}

\begin{prop}\label{diagram commute} Fix $\bs\in \overline{\mathcal A}^r_e$ and an integer $0 \le \alpha < e$. Let $\blam\in\PP_{r,n}$. Let $\blam^+$ be defined using $\bs$ and $\alpha$ as in Definition \ref{lamPlus}.
For any integers $a\ge 1$ and $0\le i<e$, the following diagram commutes: \[ \xymatrix{
\mathcal F_\bs \ar[rr]^{F^{(a)}_i} \ar[dd]_{\varTheta} &  &\mathcal F_\bs \ar[dd]^{\varTheta} \\
&   & \\
\mathcal F_\bs^+ \ar[rr]^{{^\alpha}\!F^{(a)}_i} &  & \mathcal F_\bs^+
}.  \]
\end{prop}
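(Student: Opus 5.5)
It suffices to check the identity $\varTheta\bigl(F^{(a)}_i s_\blam\bigr)={^\alpha}\!F^{(a)}_i\bigl(s_{\blam^+}\bigr)$ on each standard basis vector $s_\blam$. By \eqref{action111}, both sides are sums of standard basis vectors with coefficients powers of $v$. So we must do two things. First, we give a bijection between the index sets, namely $\{\bmu : \blam\xrightarrow{a:i}\bmu\}$ on the left and the $r$-partitions occurring on the right. Second, we check that the exponents $N_i(\blam,\bmu)$ match the corresponding exponents for $U_v(\widehat{\mathfrak{sl}}_{e+1})$. We split into the three cases $i<\alpha$, $i>\alpha$ and $i=\alpha$ of Definition \ref{tildeFi1}.

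\emph{Cases $i<\alpha$ and $i>\alpha$.} By the description after Definition \ref{lamPlus}, passing from $L_\bs(\blam)$ to $L_\bs(\blam^+)$ inserts one column before each column $j\equiv\alpha\pmod e$. The inserted column is full if $j<0$ and empty if $j\geq 0$. For $i\neq\alpha$, each pair of adjacent columns $\col_{j-1}(\blam),\col_j(\blam)$ with $j\equiv i\pmod e$ reappears unchanged as a pair of adjacent columns $\col_{j'-1}(\blam^+),\col_{j'}(\blam^+)$. Here $j'$ is congruent mod $e+1$ to $i$ when $i<\alpha$, and to $i+1$ when $i>\alpha$.

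As in the proof of Proposition \ref{PlusRegular}, Lemma \ref{add and rem} then gives a bijection from the addable/removable $i$-nodes of $\blam$ to the addable/removable $i$-nodes (resp.\ $(i+1)$-nodes) of $\blam^+$. This bijection preserves rows, components and the order $\succ$. Moving $a$ beads from column $j-1$ to column $j$ commutes with the insertion, so $\blam\xrightarrow{a:i}\bmu$ if and only if $\blam^+\xrightarrow{a:i}\bmu^+$ (resp.\ $\xrightarrow{a:i+1}$). Since the correspondence preserves the order, it also preserves $\#\add\uparrow_\gamma$ and $\#\rem\uparrow_\gamma$, and hence preserves $N_i$, using \eqref{action000} or Proposition \ref{Ncomps}. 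This gives equality of the two sums.

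\emph{Case $i=\alpha$.} Here the right-hand side is $\tilde F^{(a)}_{i+1}\tilde F^{(a)}_i s_{\blam^+}$. By Lemma \ref{expansion}, every term occurring has the form $s_{\bnu^+}$. The proof of that lemma identifies the intermediate two-step paths $\blam^+\xrightarrow{a:i}\brho\xrightarrow{a:i+1}\bnu^+$ with the sets of $a$ addable $i$-nodes of $\blam$. Concretely, a bead of $\blam$ jumps from column $j-1$ over the inserted (empty, for relevant $j\ge 0$) column to column $j$. Moreover, given $\bnu$, the intermediate $\brho$ is unique: the $i$-moves must place beads exactly in the inserted column at the rows where beads later leave it, and the inserted column is otherwise empty. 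So
\[
\tilde F^{(a)}_{i+1}\tilde F^{(a)}_i s_{\blam^+}=\sum_{\blam\xrightarrow{a:i}\bnu} v^{N_i(\blam^+,\brho)+N_{i+1}(\brho,\bnu^+)}s_{\bnu^+},
\]
and it remains to show that $N_i(\blam^+,\brho)+N_{i+1}(\brho,\bnu^+)=N_i(\blam,\bnu)$.

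\emph{The exponent identity (main obstacle).} I will compute using Proposition \ref{Ncomps}, tracking addable/removable nodes column-pair by column-pair. The addable $i$-nodes of $\blam^+$ are the beads in column $j-1$ with the inserted column empty beside them. These correspond exactly to the beads of column $j-1$ in $\blam$, that is, to addable and non-addable positions across the pair. Here Lemma \ref{no+} and the fullness condition $\bs\in\widetilde{\mathcal A}^r_e(\blam)$ are needed to exclude removable $i$-nodes of $\blam^+$ and addable $(i+1)$-nodes of $\blam^+$. Similarly, the removable $(i+1)$-nodes of $\brho$ come from the beads of column $j$ of $\blam$.

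The first step then contributes roughly $\#(\text{beads in }\col_{j-1})$ above minus nothing, and the second step contributes $\#(\text{empty in }\col_j)$ above minus $\#(\text{beads in }\col_j)$ above. The spurious terms from positions where both columns carry beads, or where both are empty, should cancel between the two steps, leaving precisely $\#\add_i\uparrow-\#\rem_i\uparrow$ for $\blam$. I expect this bookkeeping to be the main obstacle, including the ordering among the $a$ moved nodes themselves in \eqref{action000}. I would first verify it for $a=1$, and then handle general $a$ by counting the pairs of moved nodes in each step.
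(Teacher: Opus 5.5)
Your plan follows the paper's route: three cases, the column-pair bijection for $i\neq\alpha$, and for $i=\alpha$ a reduction via Lemma~\ref{expansion} and uniqueness of $\brho$ to the identity $N_i(\blam^+,\brho)+N_{i+1}(\brho,\bnu^+)=N_i(\blam,\bnu)$. The gap is that this identity, which is the whole content of the case $i=\alpha$, is left unproved, and your sketch of it is wrong.

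\begin{itemize}
\item The second step has no positive term ``$\#(\text{empty in }\col_j)$ above''. In fact $\add_{i+1}(\bnu^+)=\varnothing$. In $\bnu^+$ every inserted column is again full left of the dashed line and empty to the right. A bead at a negative inserted position $g$ therefore has a bead at $g+1$: that position is a column $\equiv\alpha\pmod e$ of $\blam$ left of the dashed line, full by $\bs\in\widetilde{\mathcal A}^r_e(\blam)$, and $\alpha$-moves only add beads to such columns.
\item Positions where $\col_{j-1}(\blam)$ and $\col_j(\blam)$ are both empty enter neither step.
\end{itemize}
Carried out as written, your accounting picks up an extra $\#(\text{empty in }\col_j)\uparrow_\gamma$, and the cancellation you expect does not happen.

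The correct count is short. Set the following notation:
\begin{itemize}
\item $S$ is the set of the $a$ chosen addable $i$-nodes of $\blam$;
\item $A$ and $R$ are the sets of all addable and removable $i$-nodes of $\blam$;
\item $P$ is the set of pairs (component, block $z\ge0$) at which both positions $\alpha+ze-1$ and $\alpha+ze$ carry beads in $L_\bs(\blam)$.
\end{itemize}
Then we have the following identifications:
\begin{itemize}
\item $\rem_i(\blam^+)=\varnothing$, by Lemma~\ref{no+};
\item $\add_i(\brho)\leftrightarrow(A\setminus S)\sqcup P$;
\item $\add_{i+1}(\bnu^+)=\varnothing$;
\item $\rem_{i+1}(\brho)\leftrightarrow R\sqcup P$.
\end{itemize}
The nodes moved in both steps lie in the same component and block as the corresponding $\gamma\in S$. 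So ``above $\gamma$'' means the same thing in all three computations, and by \eqref{action000}
\[
N_i(\blam^+,\brho)=\sum_{\gamma\in S}\#\bigl((A\setminus S)\sqcup P\bigr)\!\uparrow_\gamma,\qquad N_{i+1}(\brho,\bnu^+)=-\sum_{\gamma\in S}\#(R\sqcup P)\!\uparrow_\gamma .
\]
The $P$-terms cancel, and since $\add_i(\bnu)=A\setminus S$ the sum is $N_i(\blam,\bnu)$. This is exactly the paper's $\#A+l$ and $-(\#B+l)$. You do not need to treat $a=1$ first or track the relative order of the moved nodes: \eqref{action000} already excludes them from $\add$ and $\rem$.

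You are right that $\bs\in\widetilde{\mathcal A}^r_e(\blam)$ is needed, although it is not among the stated hypotheses (the paper uses it tacitly). It cannot be dropped. Take $r=1$, $e=2$, $s=0$, $\alpha=i=1$ and $\blam=(1)$, so that $\blam^+=(1)$. Then $\varTheta(F_1s_{(1)})=s_{(3)}+vs_{(1,1,1)}$, whereas $\tilde F_2\tilde F_1s_{(1)}=s_{(3)}+vs_{(2,1)}$.
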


\begin{proof}
Let $\blam \in \mathscr P_{r, n}$. We fix an $r$-partition $\bnu$ such that $\blam\xrightarrow{a:i} \bnu$, where $0\le i<e$. We write $[\blam] \cup \{\gamma_1, \cdots, \gamma_a\}=\bnu$, where each $\gamma_d$ ($1\le d\le a$) is an addable $i$-node of $\blam$. For any $1\le d\le a$, $\gamma_d$ corresponds to the configuration on $L_\bs(\blam)$ where $(a_d, b_d-1)$ is occupied by a bead and $(a_d, b_d)$ is empty, with $b_d=i+c_de$.

For any $1\le d\le a$, define
$$N_i(\blam, \bnu)[d]=\Bigl(\#\add_i (\nu^{(J_{\gamma_d})} )\uparrow_{\gamma_d}-\#\rem_i (\lambda^{(J_{\gamma_d})})\uparrow_{\gamma_d}\Bigr)+ \sum^{J_{\gamma_d}-1}_{j=1}\Bigl(\#\add_i (\nu^{(j)})-\#\rem_i (\lambda^{(j)})\Bigr),$$
$N_i(\blam, \bnu)=\sum_{d=1}^a N_i(\blam, \bnu)[d].$ There are three possibilities:

{\it Case 1.} $i<\alpha$. In this case for any integer $j$ satisfying $j\equiv i\pmod e$, the columns $\col_{j-1}(\blam)$ and $\col_j(\blam)$ for $L_\bs(\blam)$ are the same as the columns $\col^+_{j-1}(\blam^+)$ and $\col^+_j(\blam^+)$ for $L_\bs(\blam^+)$.
By Lemma \ref{add and rem}, the addable and removable $i$-nodes for $\blam$ correspond exactly to the addable and removable $i$-nodes for $\blam^+$.
Let $[\blam^+]\cup \{\gamma^+_1, \cdots, \gamma^+_a\}=\bnu^+$. For $1\le d\le a$, each $\gamma_d^+$ corresponds to the configuration on $L_\bs(\blam^+)$ where $(a_d, i-1+c_d(e+1))$ is occupied by a bead and $(a_d, i+c_d(e+1))$ is empty.
For any $1\le d\le a$, define
$$N_i(\blam^+, \bnu^+)[d]=\Bigl(\#\add_i ({\nu^+}^{(J_{\gamma^+_d})} )\uparrow_{\gamma^+_d}-\#\rem_i ({\lambda^+}^{(J_{\gamma^+_d})})\uparrow_{\gamma^+_d}\Bigr)+ \sum^{J_{\gamma^+_d}-1}_{j=1}\Bigl(\#\add_i ({\nu^+}^{(j)})-\#\rem_i ({\lambda^+}^{(j)})\Bigr),$$
$N_i(\blam^+, \bnu^+)=\sum_{d=1}^a N_i(\blam^+, \bnu^+)[d].$
Hence, for all $1\le d\le a$, $N_i(\blam^+, \bnu^+)[d]=N_i(\blam, \bnu)[d]$. That says, the coefficient of $s_{\bnu^+}$ in $\varTheta\circ F_i^{(a)}(\blam)$ is the same as the coefficient of $s_{\bnu^+}$ in ${^\alpha}\!F_i^{(a)}\circ \varTheta(\blam)$.

{\it Case 2.} By the same argument used in Case 1 we show that for all $1\le d\le a$, $N_i(\blam^+, \bnu^+)[d]=N_i(\blam, \bnu)[d]$ in this case. That says, the coefficient of $s_{\bnu^+}$ in $\varTheta\circ F_i^{(a)}(\blam)$ is the same as the coefficient of $s_{\bnu^+}$ in ${^\alpha}\!F_i^{(a)}\circ \varTheta(\blam)$.

{\it Case 3.} $i=\alpha$. In this case for any integer $j$ satisfying $j\equiv i\pmod e$, the columns $\col_{j-1}(\blam)$ and $\col_j(\blam)$ for $L_\bs(\blam)$ are equal to the columns $\col^+_{j-1}(\blam^+)$ and $\col^+_{j+1}(\blam^+)$ for $L_\bs(\blam^+)$ respectively.
Let $1\le a\le r$, $b\equiv\alpha \pmod{e+1}$. By Definition \ref{lamPlus}, if $b<0$ then the position $(a, b)$ in $L_\bs(\blam^+)$ has a bead; while if $b\ge 0$ then the position $(a, b)$ in $L_\bs(\blam^+)$ is empty.

Any addable $i$-node of $\blam$ corresponds to a pair $(x,y)$, where $1 \le x \le r$ and $y=i+ze$, such that the position $(x, y-1)$ in the $r$-abacus $L_\bs(\blam)$ contains a bead and the position $(x, y)$ is empty.  Since the position $(x, y)$ is empty, it follows that $y \ge 0$ and hence $z\geq 0$, which implies $y':= i+z(e+1) \ge 0$. Note that the columns $\col_{y-1}(\blam)$ and $\col_{y}(\blam)$ for $L_\bs(\blam)$ become the columns
$\col_{y'-1}(\blam^+)$ and $\col_{y'+1}(\blam^+)$ for $L_\bs(\blam^+)$ after inserting the column $\overline{\rho}$. It follows that the position $(x, y'-1)$ $L_\bs(\blam^+)$ is occupied by a bead while the position $(x, y')$ in $L_\bs(\blam^+)$ is empty, which corresponds to an addable $i$-node $\gamma^+$ of $\blam^+$. Similarly, one shows that any removable $i$-node $\eta$ of $\blam$ corresponds to a removable $(i+1)$-node $\eta^+$ of $\blam^+$. Thus
the addable $i$-nodes and removable $i$-nodes of $\blam$ are in one-to-one correspondence with two subsets $J_{\blam^+,i}^{\rm{add}}, J_{\blam^+,i}^{\rm{rem}}$  of the addable $i$-nodes and removable $(i+1)$-nodes of $\blam^+$ respectively. Moreover, any addable $i$-node $\tilde{\gamma}$ of $\blam^+$ does not lie in $J_{\blam^+,i}^{\rm{add}}$ if and only if $\tilde{\gamma}$ corresponds to a position $(x,y')$ in $L_\bs(\blam^+)$ such that both positions $(x, y-1)$ is empty and $(x, y)$ in $L_\bs(\blam)$ are occupied by beads; similarly, any removable $(i+1)$-node $\tilde{\eta}$ of $\blam^+$ does not lie in $J_{\blam^+,i}^{\rm{rem}}$ if and only if $\tilde{\eta}$ corresponds to a position  $(x,y')$ is empty in $L_\bs(\blam^+)$ such that both positions $(x, y-1)$ and $(x, y)$ in $L_\bs(\blam)$ are occupied by beads.

Let $1\leq d\leq a$. We set $$
A:=\add_i(\blam)\uparrow_{\gamma_d},\,\,\, B:=\rem_{i}(\bnu)\uparrow_{\gamma_d}=\rem_{i}(\blam) \uparrow_{\gamma_d}. $$
Then by the discussion in the last paragraph, we have \begin{equa}\label{biiplus1}B=\{\beta\in J_{\blam^+,i+1}^{\rm{rem}}\mid\beta\succ{\gamma_d}\}.\end{equa} Let $\bxi$ be an $r$-partition such that $\blam^+\xrightarrow{a:i} \bxi \xrightarrow{a:i+1} \bnu^+$.
From the proof of Lemma {expansion}, we know that $\blam^+\cup \{\gamma^+_1, \cdots, \gamma^+_a\}=\bxi$, $\bxi\cup \{\tilde \gamma^+_1, \cdots, \tilde \gamma^+_a\}=\bnu^+$, where $\tilde \gamma_d$ is the unique addable $(i+1)$-node on $\blam^+\cup \{\gamma^+_a\}$ for $1\le d\le a$.
For any $1\le d\le a$, define
$$N_i(\blam^+, \bxi)[d]=\Bigl(\#\add_i (\xi^{(J_{\gamma^+_d})} )\uparrow_{\gamma^+_d}-\#\rem_i ({\lambda^+}^{(J_{\gamma^+_d})})\uparrow_{\gamma^+_d}\Bigr)+ \sum^{J_{\gamma^+_d}-1}_{j=1}\Bigl(\#\add_i (\xi^{(j)})-\#\rem_i ({\lambda^+}^{(j)})\Bigr),$$
$N_i(\blam^+, \bxi)=\sum_{d=1}^a N_i(\blam^+, \bxi)[d].$
For any $1\le d\le a$, define
$$N_i(\bxi, \bnu^+)[d]=\Bigl(\#\add_i ({\nu^+}^{(J_{\tilde \gamma_d})} )\uparrow_{\tilde \gamma_d}-\#\rem_i ({\xi}^{(J_{\tilde \gamma_d})})\uparrow_{\tilde \gamma_d}\Bigr)+ \sum^{J_{\tilde \gamma_d}-1}_{j=1}\Bigl(\#\add_i ({\nu^+}^{(j)})-\#\rem_i ({\xi}^{(j)})\Bigr),$$
$N_i(\bxi, \bnu^+)=\sum_{d=1}^a N_i(\bxi, \bnu^+)[d].$
By Lemma \ref{no+}, $\blam^+$ has no removable $i$-nodes.
Therefore, if we let $l$ denote the number of positions \((x,y)\) which lie to the right and below the position \((a_d,b_d)\) in \(L_\bs(\blam)\) and such that both \((x,y-1)\) and \((x,y)\) are occupied by beads in \(L_\bs(\blam)\), then $N_i(\blam^+, \bxi)[d]=\#A+l$.
Recall that $\blam^{+}\cup\{\gamma^+_d\}$ has a unique addable $(i+1)$-node $\tilde \gamma_d$.
Moreover, each removable $i$-node of $\blam$ naturally corresponds to a removable $(i+1)$-node of $\blam^+$ and hence corresponds to a removable $(i+1)$-node of $\bxi$. Therefore, by (\ref{biiplus1}), $N_{i+1}(\bxi, \bnu^+)[d]=-(\#B+l)$. Consequently, $N_i(\blam^+,\bxi)[d]+N_{i+1} (\bxi,\bnu^+)[d]=\#A-\#B=N_i(\blam, \bnu)[d]$.
Hence the coefficient of $s_{\bnu^+}$ in $\varTheta\circ F_i^{(a)}(\blam)$ is the same as the coefficient of $s_{\bnu^+}$ in ${^\alpha}\!F_i^{(a)}\circ \varTheta(\blam)=\tilde{F}^{(a)}_{i+1}\tilde{F}^{(a)}_i \blam^+$.
This completes the proof of the proposition.
\end{proof}

\begin{dfn}\text{\rm (\cite[10.2]{J})} Let ``$\prec$'' be the lexicographic order on $\PP_{r,n}$. That says, for any $\blam=(\lambda^{(1)},\cdots,\lambda^{(r)})$, $\bmu=(\mu^{(1)},\cdots,\mu^{(r)})\in\PP_{r,n}$, $\blam\prec\bmu$ if there exists $j\in \{1,\ldots,r\}$ and $k\in \mathbb{Z}_{>0}$ such that $\lambda^{(a)}=\mu^{(a)}$ for all $a\in \{1,\ldots,j-1\}$, and $\lambda^{(j)}_m=\mu^{(j)}_m$ for all $m\in \{1,\ldots,k-1\}$ and $\lambda^{(j)}_k<\mu^{(j)}_k$.
\end{dfn}
It is clear that $\blam \lhd \bmu$ implies $\blam\prec\bmu$. Recall that $\bs=(s_1,\cdots,s_r)\in\Z^r$. Our definition of $(e,\bs)$-regular $r$-partition depends only on $s_j+e\Z\in\Z/e\Z$, $j=1,2,\cdots,r$. Since $e>1$, without loss of generality, we can assume that $s_j-s_{j-1}\ge n-1+e$ for $j=2, \cdots, r$. Note also that the $(e,\bs)$-regular $r$-partitions in this paper are the same as the Kleshchev $r$-partitions used in \cite{J}.
Let $\bmu$ be an $(e,\bs)$-regular $r$-partition of $n$. Applying \cite[\S10]{J}, we can find a sequence of residues (called the associated staggered sequence of residues): $$
\underbrace{{i}_1,\ldots,{i}_{1}}_{a_1\text{ times}}  \underbrace{{i}_2,\ldots,{i}_{2}}_{a_2\text{ times}} \ldots, \underbrace{{i}_m,\ldots,{i}_{m}}_{a_m\text{ times}}, $$
where ${i}_{j}\in \mathbb{Z}/e\mathbb{Z}$ and $a_j\in \mathbb{Z}_{>0}$ for all $j\in \{1,\ldots,m\}$ and where we assume that ${i}_s \neq {i}_{s+1}$ for all $s\in \{1,\ldots,m-1\}$, such that $$
F_{{i}_1}^{(a_1)}\ldots F_{{i}_m}^{(a_m)}\bvarnothing =\bmu + \sum_{\blam\prec\bmu}  c_{\blam \bmu} (v) \blam,$$
where $c_{\blam\bmu} (v)\in\Z[v,v^{-1}]$ for each pair $(\blam,\bmu)$.

\begin{lemma}\label{4.9}
Fix $0\leq \alpha<e$ and the multicharge $\bs\in\tilde{\mathcal A}^r_e(\blam)\cap\tilde{\mathcal A}^r_e(\bmu)$, where $\blam,\bmu$ are two $r$-partitions. We have $\blam \prec \bmu$ if and only if $\blam^+ \prec \bmu^+$.
\end{lemma}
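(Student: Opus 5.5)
The plan is to express each part $(\blam^+)^{(j)}_k$ as a strictly increasing function of the single number $\blam^{(j)}_k$, where the function depends only on $j$, $k$, $\bs$, $\alpha$ and $e$. Once that is done, the lexicographic comparison transfers directly in both directions.

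First I will make the column insertion explicit on each row of the $r$-abacus. By the discussion after Definition \ref{lamPlus}, $L_\bs(\blam^+)$ is obtained from $L_\bs(\blam)$ by inserting one new column immediately to the left of every column $m\equiv\alpha\pmod e$. The inserted column is empty if $m\ge\alpha$ and full if $m<\alpha$. Let $\phi:\Z\to\Z$ be the strictly increasing map sending the position of an old column to its new position. Then every bead of row $x$ at position $p$ in $L_\bs(\blam)$ sits at $\phi(p)$ in $L_\bs(\blam^+)$, and the remaining beads of $L_\bs(\blam^+)$ lie in the inserted full columns, all at negative positions.

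The next step is to check that for every $k\ge1$ the $k$-th bead $\CIRCLE^j_k$ of row $j$ (counted from the right) in $L_\bs(\blam)$ becomes the $k$-th bead of row $j$ in $L_\bs(\blam^+)$. I will argue this through the inserted full columns. Since $\bs\in\tilde{\mathcal A}^r_e(\blam)$, Lemma \ref{s_jge} says every position left of the dashed line is already occupied. The inserted full columns lie strictly left of the dashed line, so the beads they add in row $j$ are all to the left of every moved bead at a non-negative position. For a bead at a negative position $p$, every position $q$ with $p<q<0$ is occupied and $\phi$ preserves order among moved beads, so the beads to its right are the images of the old beads to its right, plus possibly inserted beads to its right. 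Here the condition $\alpha\ge 0$ enters (cf. Remark \ref{keyrem1}): it keeps the inserted full columns left of the dashed line. I expect this bookkeeping to be the main obstacle, because the new full-column beads can interleave with old beads at negative positions.

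A cleaner way to handle it is to note that the beads of row $j$ at negative positions correspond to zero parts. For those indices, $\blam^{(j)}_k=0$ and $(\blam^+)^{(j)}_k=0$ simultaneously, because all positions to their left are full in both abaci. So I only need to track the beads at positions $p\ge0$, which are finitely many. For these beads, $\phi(p)=p+\#\{m\mid \alpha\le m\le p,\ m\equiv\alpha \pmod e\}$, and they remain the rightmost beads in the same order.

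With that in hand, Lemma \ref{bead minus} gives
$$(\blam^+)^{(j)}_k=\blam^{(j)}_k+g_j\bigl(\blam^{(j)}_k-k+s_j\bigr),$$
where $g_j(p)$ is the number of inserted empty columns strictly left of $\phi(p)$. The function $g_j$ is nondecreasing in $p$, and the formula also covers zero parts, since $g_j(p)=0$ for $p<0$. Hence, for fixed $j$ and $k$, the map $\blam^{(j)}_k\mapsto(\blam^+)^{(j)}_k$ is strictly increasing and the same for $\blam$ and $\bmu$. In particular $\blam^{(a)}=\bmu^{(a)}$ if and only if $(\blam^+)^{(a)}=(\bmu^+)^{(a)}$.

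Finally I compare the first component and row at which $\blam$ and $\bmu$ differ. Those data are identical for $\blam^+$ and $\bmu^+$, and the strict monotonicity preserves which of the two values is smaller there. This gives $\blam\prec\bmu$ if and only if $\blam^+\prec\bmu^+$.
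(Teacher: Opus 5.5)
Your proof is correct and is essentially the paper's argument in tidier form. Both rest on the same facts: column insertion re-embeds the old positions order-preservingly, and the only new beads lie left of the dashed line, where every position is already full because $\bs\in\tilde{\mathcal A}^r_e(\blam)\cap\tilde{\mathcal A}^r_e(\bmu)$. So the beads right of the dashed line, which carry all nonzero parts, keep their indices and relative order, and your formula $(\blam^+)^{(j)}_k=\blam^{(j)}_k+g_j(\blam^{(j)}_k-k+s_j)$ with $g_j$ nondecreasing makes the transfer of the first difference explicit in both directions.

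Two small corrections. Your intermediate claim that the $k$-th bead keeps its index for every $k\ge 1$ is false for beads left of the first inserted full column; drop it, since your restriction to beads at non-negative positions already avoids it. Also, the inserted full columns lie left of the dashed line by construction, not because $\alpha\ge 0$.
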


\begin{proof} $\blam \prec \bmu$ if and only if there exist $j\in \{1,\ldots,r\}$ and $k\in \mathbb{Z}_{>0}$ such that $\lambda^{(a)}=\mu^{(a)}$ for all $a\in \{1,\ldots,j-1\}$, $\lambda^{(j)}_m=\mu^{(j)}_m$ for all $m\in \{1,\ldots,k-1\}$, and $\lambda^{(j)}_k<\mu^{(j)}_k$.

Recall $\CIRCLE_j^i(\blam, \bs)$ denotes the $j$-th bead on the abacus $L_{s_i}(\blam^{(i)})$, counted from right to left. By definition, $\CIRCLE_j^i(\blam, \bs)$ is located in column $s_i+\lambda^{(i)}_j-j$ on the $i$-th runner $L_{s_i}(\blam^{(i)})$ of the $r$-abacus $L_\bs(\blam)$. Hence, $\blam \prec \bmu$ if and only if $L_{s_a}(\blam^{(a)})=L_{s_a}(\bmu^{(a)})$ for all $a\in \{1, \ldots, j-1\}$, $\CIRCLE^j_m$ of $L_{s_j}(\lambda^{(j)})$ and that of $L_{s_j}(\mu^{(j)})$ are in the same column for all $m\in \{1, \ldots, k-1\}$, and $\CIRCLE^j_k$ of $L_{s_j}(\lambda^{(j)})$ lies strictly to the left of $\CIRCLE^j_k$ of $L_{s_j}(\mu^{(j)})$.

Since $\lambda^{(j)}_k<\mu^{(j)}_k$, we have $\mu^{(j)}_k>0$. Thus, there exists an empty position to the left of $\CIRCLE^j_k$ in $L_{s_j}(\mu^{(j)})$, which was placed it to the right of the dashed line. Consequently, no bead will be inserted to its right during the transformation from $L_\bs(\bmu)$ to $L_\bs(\bmu^+)$.

Note that the beads inserted from $L_\bs(\blam)$ to $L_\bs(\blam^+)$ and from $L_\bs(\bmu)$ to $L_\bs(\bmu^+)$ are in the same columns. Therefore, $L_{s_a}({\lambda^+}^{(a)})=L_{s_a}({\mu^+}^{(a)})$ for all $a\in \{1, \ldots, j-1\}$, $\CIRCLE^j_m$ of $L_{s_j}({\lambda^+}^{(j)})$ and that of $L_{s_j}({\mu^+}^{(j)})$ are in the same column for all $m\in \{1, \ldots, k-1\}$, and $\CIRCLE^j_k$ of $L_{s_j}({\lambda^+}^{(j)})$ lies to the left of $\CIRCLE^j_k$ of $L_{s_j}({\mu^+}^{(j)})$. It follows that $\blam \prec \bmu$ if and only if $\blam^+ \prec \bmu^+$.
\end{proof}

\begin{proposition}\label{KeyCanonicalBases}
Let $\bmu\in\PP_{r,n}$, $\bs\in \tilde{\mathcal A}^r_e(\bmu)$. We assume $\bs\in \tilde{\mathcal A}^r_e(\blam)$ whenever $(\blam, \bs)$ and $(\bmu, \bs)$ lie in the same block. Fix an integer $0 \le \alpha < e$. Let $\bmu^+$ be defined using $\bs$ and $\alpha$ as in Definition \ref{lamPlus}.
Suppose that $\bmu$ is an $(e,\bs)$-regular. Then $G^{\bs}_{e+1}(\bmu^+)=\varTheta (G^{\bs}_e(\bmu))$.
\end{proposition}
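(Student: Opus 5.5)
We follow the strategy of James--Mathas: show that $\varTheta(G^{\bs}_e(\bmu))$ satisfies the two characterising properties of $G^{\bs}_{e+1}(\bmu^+)$. By Proposition \ref{PlusRegular}, $\bmu^+$ is $(e+1,\bs)$-regular, so $G^{\bs}_{e+1}(\bmu^+)$ is defined.

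\emph{Step 1: $\varTheta(G^{\bs}_e(\bmu))$ lies in $M_\bs^+$ and is bar-invariant.}
Write $G^{\bs}_e(\bmu)=X\bvarnothing$ with $X$ a bar-invariant element of $U_v(\widehat{\mathfrak{sl}}_e)^-$. Since $U_v^-$ is spanned by monomials in divided powers, we may take $X=\sum_k c_k(v)\,F^{(a_{k,1})}_{i_{k,1}}\cdots F^{(a_{k,t})}_{i_{k,t}}$. The coefficients $c_k$ need not be bar-invariant individually, but bar-invariance of $X\bvarnothing$ is what matters.

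Define ${}^\alpha X$ by replacing each $F_i^{(a)}$ with ${}^\alpha\!F_i^{(a)}$. Iterating Proposition \ref{diagram commute} gives
\[
\varTheta(X\bvarnothing)={}^\alpha X\,\varTheta(\bvarnothing)={}^\alpha X\,\bvarnothing^+ .
\]
Note that $\bvarnothing^+$ may be nonempty. However, by Lemma \ref{block moving vector} and the argument of Proposition \ref{PlusRegular} (weight zero), $\bvarnothing^+$ is the unique $r$-partition in a simple block. Hence it is an extremal weight vector, $\bvarnothing^+=\tilde F^{(b_1)}_{j_1}\cdots\tilde F^{(b_u)}_{j_u}\bvarnothing$, a bar-invariant element of $M^+_\bs$ obtained from $\bvarnothing$ by a product of divided powers.

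Since each ${}^\alpha\!F_i^{(a)}$ is a bar-invariant product of divided powers of $U_v(\widehat{\mathfrak{sl}}_{e+1})$, the map $u\mapsto{}^\alpha u$ commutes with bar, provided $\varphi\colon F_i^{(a)}\mapsto {}^\alpha\!F_i^{(a)}$ extends to an algebra homomorphism on the relevant subspace. To avoid checking Serre relations, we instead use the identity $\varTheta(\overline{m})=\overline{\varTheta(m)}$ for $m\in M_\bs$, proved as follows. Choose, for each regular $\bnu$, the staggered-sequence vector $A(\bnu)=F^{(a_1)}_{i_1}\cdots F^{(a_m)}_{i_m}\bvarnothing$. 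This vector is bar-invariant, and the vectors $A(\bnu)$ form a basis of $M_\bs$ that is unitriangular with respect to $\prec$. Then $\varTheta(A(\bnu))={}^\alpha F\cdots{}^\alpha F\,\bvarnothing^+$ is visibly bar-invariant. Writing $G^{\bs}_e(\bmu)=\sum p_\bnu(v)A(\bnu)$ with $\overline{p_\bnu}=p_\bnu$ (by unitriangularity and bar-invariance), we get $\overline{\varTheta(G)}=\varTheta(G)$, and $\varTheta(G)\in M^+_\bs$.

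\emph{Step 2: triangularity.}
We have $\varTheta(G^{\bs}_e(\bmu))=s_{\bmu^+}+\sum_{\blam}d^{e,\bs}_{\blam\bmu}(v)s_{\blam^+}$ with coefficients in $v\Z[v]$. Each $\blam$ in this sum lies in the block of $\bmu$, so $\bs\in\tilde{\mathcal A}^r_e(\blam)$ by hypothesis, and Lemma \ref{4.9} applies. This gives only $\blam^+\prec\bmu^+$, not dominance, so we work with the characterisation via $\prec$: a bar-invariant element of $M^+_\bs$ of the form $s_{\bmu^+}+\sum_{\brho\prec\bmu^+}v\Z[v]s_\brho$ equals $G_{e+1}^{\bs}(\bmu^+)$. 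This follows from the standard uniqueness argument: the difference is bar-invariant, lies in $M^+_\bs$, and has coefficients in $v\Z[v]$, hence vanishes by the usual lattice argument, using the basis $\varTheta(A(\bnu))$ triangular in $\prec$.

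\emph{Main obstacle.}
The hard part is Step 1: establishing that $\varTheta$ intertwines bar involutions, i.e.\ that $\varTheta(A(\bnu))$ genuinely lands in $M^+_\bs$ as a bar-invariant element. This needs control of $\bvarnothing^+$ (via the simple-block argument) and the use of Lemma \ref{expansion} to guarantee that no terms outside $\{\bnu^+\}$ appear. The second part of Lemma \ref{expansion} and Proposition \ref{diagram commute} are exactly what make this work, since ${}^\alpha\!F^{(a)}_\alpha\varTheta=\varTheta F^{(a)}_\alpha$ holds on the nose.
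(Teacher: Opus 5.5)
Your proposal is correct and takes essentially the same route as the paper: the bar-invariant staggered-sequence vectors $A(\bnu)$, Proposition \ref{diagram commute} to get $\varTheta(A(\bnu))={}^\alpha\!F^{(a_1)}_{i_1}\cdots{}^\alpha\!F^{(a_m)}_{i_m}\varTheta(\bvarnothing)$ with $\varTheta(\bvarnothing)$ bar-invariant by the weight-zero/simple-block argument, Lemma \ref{4.9} for unitriangularity with respect to $\prec$, and uniqueness of the canonical basis. One small correction to the uniqueness step: expand the difference $\varTheta(G^{\bs}_e(\bmu))-G^{\bs}_{e+1}(\bmu^+)$ in the canonical basis $\{G^{\bs}_{e+1}(\brho)\}$ of $M^+_\bs$, not in the vectors $\varTheta(A(\bnu))$, because $G^{\bs}_{e+1}(\bmu^+)$ is not known a priori to lie in $\varTheta(M_\bs)$.
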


\begin{proof} We have $$
G^\bs_e (\bmu) = s_\bmu+\sum_{\bmu\rhd\blam \in \PP_{r,n}} d^{e,\bs}_{\blam \bmu}(v) s_\blam, $$
where $d^\bs_{\blam \bmu} (v) \in v\Z[v]$ for each pair $(\blam,\bmu)$. Since $\bmu\rhd\blam$ implies $\bmu\succ\blam$, it follows that $$
G^\bs_e (\bmu) = s_\bmu+\sum_{\bmu\succ\blam \in \PP_{r,n}} d^{e,\bs}_{\blam \bmu}(v) s_\blam. $$
Then  $$
\varTheta (G^\bs_e(\bmu)) = s_{\bmu^+}+\sum_{\bmu\succ\blam \in \PP_{r,n}} d^{e,\bs}_{\blam \bmu}(v) s_{\blam^+} . $$
Applying Lemma \ref{4.9},  we get $$
\varTheta (G^\bs_e(\bmu)) = s_{\bmu^+}+\sum_{\substack{\blam\in\PP_{r,n},\\ \bmu^+\succ\blam^+}} d^{e,\bs}_{\blam \bmu}(v) s_{\blam^+} . $$

By Proposition \ref{PlusRegular}, $\bmu^+$ is $(e,\bs)$-regular. Therefore, $$
G^\bs_{e+1}(\bmu^+) = s_{\bmu^+} + \sum_{\bmu^+\succ\brho\in \PP_{r,n^+}} d^{e+1,\bs}_{\brho\bmu^+}(v)s_{\brho} ,
$$
Note that $G^\bs_{e+1} (\bmu^+)$ is uniquely determined by the properties that $$
\overline{G^\bs_{e+1} (\bmu^+)} = G^\bs_{e+1}(\bmu^+),\quad G^\bs_{e+1} (\bmu^+)\equiv s_{\bmu^+}\pmod{\sum_{\bmu^+\succ\brho}v\Z[v]s_{\brho}}, $$
where $n^+:=|\bmu^+|$. Therefore, to prove $\varTheta(G_e^\bs (\bmu)) = G^\bs_{e+1}(\bmu^+)$, it suffices to show that $\overline{\varTheta(G_e^\bs (\bmu))}=\varTheta(G_e^\bs (\bmu))$.

Following \cite[10.2]{J}, there is a sequence of residues: $$
\underbrace{{i}_1,\ldots,{i}_{1}}_{a_1\text{ times}}  \underbrace{{i}_2,\ldots,{i}_{2}}_{a_2\text{ times}} \ldots, \underbrace{{i}_m,\ldots,{i}_{m}}_{a_m\text{ times}}, $$
where ${i}_{j}\in \mathbb{Z}/e\mathbb{Z}$ and $a_j\in \mathbb{Z}_{>0}$ for all $j\in \{1,\ldots,m\}$ and where we assume that ${i}_s \neq {i}_{s+1}$ for all $s\in \{1,\ldots,m-1\}$, such that $$
F_{{i}_1}^{(a_1)}\ldots F_{{i}_m}^{(a_m)}\bvarnothing =\bmu + \sum_{\blam\prec\bmu}  c_{\blam \bmu} (v) \blam,$$
where $c_{\blam\bmu} (v)\in\Z[v,v^{-1}]$ for each pair $(\blam,\bmu)$.

Since $\overline{F_i} = F_i$ for each $0\leq i<e$, it follows that $A_\bmu = F_{{i}_1}^{(a_1)} \dots F_{{i}_m}^{(a_m)} \bvarnothing$ is bar-invariant.
Since $A_\bmu=s_\bmu + \sum_{\bmu\succ\blam} c_{\blam \bmu}(v) s_\blam$, where $c_{\blam \bmu} \in \mathbb Z[v, v^{-1}]$ for each pair $(\blam,\bmu)$. By an induction on ``$\prec$'', we deduce that there exist unique polynomials $\alpha_{\bnu \bmu}(v,v^{-1})\in \mathbb Z[v,v^{-1}]$ such that $G^\bs_ e(\bmu)=A_\bmu-\sum_{\bmu\succ\bnu} \alpha_{\bnu \bmu} (v,v^{-1}) G^\bs_e (\bnu)$, where the summation runs over $(e,\bs)$-regular $r$-partitions $\bnu$ satisfying $\bmu\succ\bnu$. Moreover, $\overline{\alpha_{\bnu \bmu} (v,v^{-1})}=\alpha_{\bnu \bmu} (v,v^{-1})$ for each pair $(\bmu,\bnu)$.

We define $A^+_\bmu := \varTheta(A_\bmu)$. Then $A^+_\bmu = s_{\bmu^+} + \sum_{\bmu\succ\blam} a_{\blam \bmu}(v,v^{-1}) s_{\blam^+}$, where $a_{\blam \bmu} \in \mathbb Z[v, v^{-1}]$ for each pair $(\blam,\bmu)$.
By Proposition \ref{diagram commute}, $A^+_\bmu={{^\alpha}\! F}_{{i}_1}^{(a_1)}\ldots {{^\alpha}\!F}_{{i}_m}^{(a_m)}\varTheta(\bvarnothing)$.
Assume $(\varTheta(\bvarnothing), \bs)$ belong to the block $B_1$. Since $w(\bvarnothing, \bs)=0$, by lemma \ref{block moving vector}, we see $w(\varTheta(\bvarnothing), \bs)=0$. Applying \cite[Theorem 4.1]{F1}, $B_1$ is simple block. Hence $G^\bs_{e+1}(\varTheta(\bvarnothing))=\varTheta (\bvarnothing)$. In particular, $\varTheta(\bvarnothing)$ is bar invariant. It follows that $A^+_\bmu$ is bar invariant.

We proceed by induction on the lexicographical order ``$\prec$''. If there exists no $(e,\bs)$-regular $r$-partition $\bxi$ such that $\bnu \succ \bxi$, then $\varTheta(G^\bs_e(\bnu)) = A^+_\mu$ is bar-invariant. Assume that $\varTheta(G^\bs_e(\bnu))$ is bar-invariant for all $(e,\bs)$-regular $r$-partitions $\bnu$ such that $\bmu\succ\bnu$; then $\varTheta(G^\bs_e(\bmu)) = A_\bmu^+ - \sum_{\bmu\succ\bnu} \alpha_{\bnu\bmu}(v,v^{-1}) \varTheta(G^\bs_e(\bnu))$ is also bar-invariant. Therefore, $\varTheta(G^\bs_e(\bmu))$ is bar invariant. Consequently, we show that $\varTheta (G^\bs_e(\bmu))=G^\bs_{e+1}(\bmu^+)$.
\end{proof}

\medskip
\noindent
{\textbf{Proof of Theorem A}: } This follows from Propositions \ref{PlusRegular}, \ref{KeyCanonicalBases} and Theorem \ref{A1}.\hfill\qed
\medskip

\begin{remark} Note that in general $\varnothing^{+}\neq\varnothing$. For example, assume $s=5, r=1, e=3, \alpha=0$. we have that $$
\varnothing^{+}=(2,2,1,1,1)\neq\varnothing.
$$
\end{remark}

\begin{example} Suppose that $e=4, \bs=(4,6), r=2, n=4, \alpha=0$, $\blam=((1^2),(1^2)), \bmu=((2),(2))$. Then we have that $$
\blam^+=((3,2,1^2),(3^2,1^4)),\,\,\quad\bmu^+=((4,1^3),(4,2,1^4)), $$
and $$
\bigl[\hat{S}^{((3,2,1^2),(3^2,1^4)))}:\hat{D}^{((4,1^3),(4,2,1^4))}\bigr]=\bigl[S^{((1^2),(1^2))}:D^{((2),(2))}\bigr]=1.
$$
\end{example}

\bigskip
\section{Theorem of adding full runner for $r$-partitions}

The purpose of this section is to generalize Fayers's full runner removal theorem \cite{Fay08} to the setting of the cyclotomic Hecke algebra $\H_{r,n}$. Throughout this section, we fix an integer \(\alpha\) satisfying \(0\le \alpha<e\). Let $\blam\in\PP_{r,n}$. We assume $\bs \in \overline{\mathcal A}^r_e$ but we do not require $\bs \in \widetilde{\mathcal A}^r_e(\blam)$.

\medskip
\subsection{Definition of adding full runner for $r$-partitions}

\begin{dfn}\label{Plusk} Let $k\in\N$ and $\blam\in\PP_{r,n}$. Consider the $e$-tuple column abacus associated with \(\tau_{e, r}\big(L_{\boldsymbol{s}}(\boldsymbol{\lambda})\big)\). Let  \(\tilde{\rho}\) be a new runner such that every slot above the dashed line and the first $kr$ bead-positions below the dashed line are occupied by beads, whereas all the remaining positions on \(\tilde{\rho}\) are empty. Applying Lemma \ref{Uglov}, there is a uniquely determined pair $(\blam^{+k},\bs^{+k})$, such that \(\tau_{e+1, r}\big(L_{\boldsymbol{s}^{+k}}(\boldsymbol{\lambda}^{+k})\big)\) is equal to the $(e+1)$-tuple column abacus formed by inserting the new runner \(\tilde{\rho}\) before \(\rho_\alpha\) within the $e$-tuple column abacus associated with \(\tau_{e, r}\big(L_{\boldsymbol{s}}(\boldsymbol{\lambda})\big)\).
\end{dfn}

This corresponds to inserting a column to the left of each column \(\col_j(\blam)\) of \(L_{\boldsymbol{s}}(\boldsymbol{\lambda})\) for all indices $j$ with \(j\equiv \alpha\pmod e\). The inserted column is completely empty when \(j\ge \alpha+ke\), and fully populated with beads when \(j\le \alpha+(k-1)e\). Although our notation does not reflect,  the abacus $L_{\bs^{+k}}(\blam^{+k})$ does depend upon the choice of $\alpha$. We usually label the runners of the $(e+1)$-tuple column abacus of $\tau_{e+1,r}({L}_{\bs^{+k}}(\blam^{+k}))$ as $\tilde{\rho}_0,\tilde{\rho}_1,\cdots,\tilde{\rho}_{e}$ from left to right.

The condition \(\boldsymbol{s} \in \widetilde{\mathcal A}^r_e(\boldsymbol{\lambda})\) is not required when inserting full columns. If we impose this condition, the insertion of an empty runner may be regarded as the case \(k=0\).

\begin{example}
Let $e=3, k=2$, $\bs=(1, 1, 2)$ and $\blam=((4,1), \varnothing, \varnothing)$. Then $L_{\bs}(\blam)$ is as follows.
\begin{center}
\begin{tikzpicture}[scale=0.5, bb/.style={draw,circle,fill,minimum size=2.5mm,inner sep=0pt,outer sep=0pt}, wb/.style={draw,circle,fill=white,minimum size=2.5mm,inner sep=0pt,outer sep=0pt}]
	
\foreach \x in {10,-9}
\foreach \y in {2,1,0}
{
\node at (\x,\y) {$\cdots$};
}	
	\node [wb] at (9,2) {};
	\node [wb] at (8,2) {};
	\node [wb] at (7,2) {};
	\node [wb] at (6,2) {};
	\node [wb] at (5,2) {};
	\node [wb] at (4,2) {};
	\node [wb] at (3,2) {};
	\node [bb] at (2,2) {};
	\node [bb] at (1,2) {};
	\node [bb] at (0,2) {};
	\node [bb] at (-1,2) {};
	\node [bb] at (-2, 2) {};
	\node [bb] at (-3, 2) {};
	\node [bb] at (-4,2) {};
	\node [bb] at (-5,2) {};
	\node [bb] at (-6,2) {};
	\node [bb] at (-7,2) {};
	\node [bb] at (-8,2) {};

	\node [wb] at (9,1) {};
	\node [wb] at (8,1) {};
	\node [wb] at (7,1) {};
	\node [wb] at (6,1) {};
	\node [wb] at (5,1) {};
	\node [wb] at (4,1) {};
	\node [wb] at (3,1) {};
	\node [wb] at (2,1) {};
	\node [bb] at (1,1) {};
	\node [bb] at (0,1) {};
	\node [bb] at (-1,1) {};
	\node [bb] at (-2, 1) {};
	\node [bb] at (-3, 1) {};
	\node [bb] at (-4,1) {};
	\node [bb] at (-5,1) {};
	\node [bb] at (-6,1) {};
	\node [bb] at (-7,1) {};
	\node [bb] at (-8,1) {};

	\node [wb] at (9,0) {};
	\node [wb] at (8,0) {};
	\node [wb] at (7,0) {};
	\node [wb] at (6,0) {};
	\node [bb] at (5,0) {};
	\node [wb] at (4,0) {};
	\node [wb] at (3,0) {};
	\node [wb] at (2,0) {};
	\node [bb] at (1,0) {};
	\node [wb] at (0,0) {};
	\node [bb] at (-1,0) {};
	\node [bb] at (-2, 0) {};
	\node [bb] at (-3, 0) {};
	\node [bb] at (-4,0) {};
	\node [bb] at (-5,0) {};
	\node [bb] at (-6,0) {};
	\node [bb] at (-7,0) {};
	\node [bb] at (-8,0) {};
	
	\draw[](-5.5,-0.5)--node[]{}(-5.5,2.5);
	\draw[](-2.5,-0.5)--node[]{}(-2.5,2.5);
	\draw[dashed](0.5,-0.5)--node[]{}(0.5,2.5);
		\draw[](3.5,-0.5)--node[]{}(3.5,2.5);
	\draw[](6.5,-0.5)--node[]{}(6.5,2.5);
	\end{tikzpicture}.
\end{center}
Let $\alpha=0$, then $L_{\bs^{+2}}(\blam^{+2})$ is as follows.
\begin{center}
\begin{tikzpicture}[scale=0.5, bb/.style={draw,circle,fill,minimum size=2.5mm,inner sep=0pt,outer sep=0pt}, wb/.style={draw,circle,fill=white,minimum size=2.5mm,inner sep=0pt,outer sep=0pt}]
	
\foreach \x in {10,-9}
\foreach \y in {2,1,0}
{
\node at (\x,\y) {$\cdots$};
}	
	\node [wb] at (9,2) {};
	\node [wb] at (8,2) {};
	\node [wb] at (7,2) {};
	\node [wb] at (6,2) {};
	\node [bb] at (5,2) {};
	\node [wb] at (4,2) {};
	\node [bb] at (3,2) {};
	\node [bb] at (2,2) {};
	\node [bb] at (1,2) {};
	\node [bb] at (0,2) {};
	\node [bb] at (-1,2) {};
	\node [bb] at (-2, 2) {};
	\node [bb] at (-3, 2) {};
	\node [bb] at (-4,2) {};
	\node [bb] at (-5,2) {};
	\node [bb] at (-6,2) {};
	\node [bb] at (-7,2) {};
	\node [bb] at (-8,2) {};

	\node [wb] at (9,1) {};
	\node [wb] at (8,1) {};
	\node [wb] at (7,1) {};
	\node [wb] at (6,1) {};
	\node [bb] at (5,1) {};
	\node [wb] at (4,1) {};
	\node [wb] at (3,1) {};
	\node [bb] at (2,1) {};
	\node [bb] at (1,1) {};
	\node [bb] at (0,1) {};
	\node [bb] at (-1,1) {};
	\node [bb] at (-2, 1) {};
	\node [bb] at (-3, 1) {};
	\node [bb] at (-4,1) {};
	\node [bb] at (-5,1) {};
	\node [bb] at (-6,1) {};
	\node [bb] at (-7,1) {};
	\node [bb] at (-8,1) {};

	\node [wb] at (9,0) {};
	\node [wb] at (8,0) {};
	\node [bb] at (7,0) {};
	\node [wb] at (6,0) {};
	\node [bb] at (5,0) {};
	\node [wb] at (4,0) {};
	\node [wb] at (3,0) {};
	\node [bb] at (2,0) {};
	\node [bb] at (1,0) {};
	\node [wb] at (0,0) {};
	\node [bb] at (-1,0) {};
	\node [bb] at (-2, 0) {};
	\node [bb] at (-3, 0) {};
	\node [bb] at (-4,0) {};
	\node [bb] at (-5,0) {};
	\node [bb] at (-6,0) {};
	\node [bb] at (-7,0) {};
	\node [bb] at (-8,0) {};
	
	\draw[](-7.5,-0.5)--node[]{}(-7.5,2.5);
	\draw[](-3.5,-0.5)--node[]{}(-3.5,2.5);
	\draw[dashed](0.5,-0.5)--node[]{}(0.5,2.5);
		\draw[](4.5,-0.5)--node[]{}(4.5,2.5);
	\draw[](8.5,-0.5)--node[]{}(8.5,2.5);
	\end{tikzpicture}.
\end{center}
The $e$-tuple abacus of $\tau_{e, r}(L_{\bs}(\blam))$ is
\begin{center}
\begin{tikzpicture}[scale=0.5, bb/.style={draw,circle,fill,minimum size=2.5mm,inner sep=0pt,outer sep=0pt}, wb/.style={draw,circle,fill=white,minimum size=2.5mm,inner sep=0pt,outer sep=0pt}]
\foreach \x in {-1,0,1}
\foreach \y in {5,-6.5}
{
\node at (\x,\y) {$\vdots$};
}
\node[bb] at (-1, 4){};
\node[bb] at (-1, 3){};
\node[bb] at (-1, 2){};
\node[bb] at (-1, 1){};
\node[bb] at (-1, 0){};
\node[bb] at (-1, -1){};
\node[bb] at (-1, -2){};
\node[wb] at (-1, -3){};
\node[wb] at (-1, -4){};
\node[wb] at (-1, -5){};
\node[wb] at (-1, -6){};

\node[bb] at (0, 4){};
\node[bb] at (0, 3){};
\node[bb] at (0, 2){};
\node[bb] at (0, 1){};
\node[bb] at (0, 0){};
\node[wb] at (0, -1){};
\node[wb] at (0, -2){};
\node[wb] at (0, -3){};
\node[wb] at (0, -3){};
\node[wb] at (0, -4){};
\node[bb] at (0, -5){};
\node[wb] at (0, -6){};

\node[bb] at (1, 4){};
\node[bb] at (1, 3){};
\node[bb] at (1, 2){};
\node[wb] at (1, 1){};
\node[wb] at (1, 0){};
\node[wb] at (1, -1){};
\node[wb] at (1, -2){};
\node[wb] at (1, -3){};
\node[wb] at (1, -3){};
\node[wb] at (1, -4){};
\node[wb] at (1, -5){};
\node[wb] at (1, -6){};

\draw[dashed](-1.5,0.5)--node[]{}(1.5,0.5);
	\end{tikzpicture}.
	\end{center}
The $e$-tuple abacus of $\tau_{e+1, r}(L_{\bs}(\blam^{+2}))$ is	
	\begin{center}
\begin{tikzpicture}[scale=0.5, bb/.style={draw,circle,fill,minimum size=2.5mm,inner sep=0pt,outer sep=0pt}, wb/.style={draw,circle,fill=white,minimum size=2.5mm,inner sep=0pt,outer sep=0pt}]
\foreach \x in {-2,-1,0,1}
\foreach \y in {5,-6.5}
{
\node at (\x,\y) {$\vdots$};
}
\node[bb] at (-2, 4){};
\node[bb] at (-2, 3){};
\node[bb] at (-2, 2){};
\node[bb] at (-2, 1){};
\node[bb] at (-2, 0){};
\node[bb] at (-2, -1){};
\node[bb] at (-2, -2){};
\node[bb] at (-2, -3){};
\node[bb] at (-2, -4){};
\node[bb] at (-2, -5){};
\node[wb] at (-2, -6){};

\node[bb] at (-1, 4){};
\node[bb] at (-1, 3){};
\node[bb] at (-1, 2){};
\node[bb] at (-1, 1){};
\node[bb] at (-1, 0){};
\node[bb] at (-1, -1){};
\node[bb] at (-1, -2){};
\node[wb] at (-1, -3){};
\node[wb] at (-1, -4){};
\node[wb] at (-1, -5){};
\node[wb] at (-1, -6){};

\node[bb] at (0, 4){};
\node[bb] at (0, 3){};
\node[bb] at (0, 2){};
\node[bb] at (0, 1){};
\node[bb] at (0, 0){};
\node[wb] at (0, -1){};
\node[wb] at (0, -2){};
\node[wb] at (0, -3){};
\node[wb] at (0, -3){};
\node[wb] at (0, -4){};
\node[bb] at (0, -5){};
\node[wb] at (0, -6){};

\node[bb] at (1, 4){};
\node[bb] at (1, 3){};
\node[bb] at (1, 2){};
\node[wb] at (1, 1){};
\node[wb] at (1, 0){};
\node[wb] at (1, -1){};
\node[wb] at (1, -2){};
\node[wb] at (1, -3){};
\node[wb] at (1, -3){};
\node[wb] at (1, -4){};
\node[wb] at (1, -5){};
\node[wb] at (1, -6){};

\draw[dashed](-1.5,0.5)--node[]{}(1.5,0.5);
	\end{tikzpicture}.
	\end{center}
\end{example}

\begin{lemma} Let $k\in\N$. Fix $\bs \in \overline{\mathcal A}^r_e$ and an integer \(\alpha\) satisfying \(0\le \alpha<e\). Let $\blam\in\PP_{r,n}$. Let $(\blam^{+k},\bs^{+k})$ be the unique pair defined using $\alpha$ and $k$ as in Definition \ref{Plusk}, where  $\bs^{+k}=(s_1^{+k},\cdots,s_r^{+k})\in\Z^r$. Then $s^{+k}_i=s_i+k$ for each $1\leq i\leq r$. Moreover, $\bs^{+k} \in \overline{\mathcal A}^r_{e+1}$.
\end{lemma}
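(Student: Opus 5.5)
The plan is to compute the charges directly from the $r$-abacus description of $L_{\bs^{+k}}(\blam^{+k})$ given after Definition \ref{Plusk}, using the formula (\ref{Biject1}). The charge $s^{+k}_i$ equals the number of beads on the $i$-th row to the right of the dashed line minus the number of empty positions on that row to the left of the dashed line. The construction inserts, in every row $i$, a new column immediately to the left of each column $\col_j(\blam)$ with $j\equiv\alpha\pmod e$. Inserted columns with $j\le \alpha+(k-1)e$ are full of beads and those with $j\ge\alpha+ke$ are empty. All original beads and empty positions are kept, in the same relative order.

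First I fix the position of the dashed line. The dashed line of $L_{\bs^{+k}}(\blam^{+k})$ corresponds, via $\tau_{e+1,r}$, to the dashed line on the column abacus, and the runner $\tilde\rho$ is full above it. Hence the new columns with $j\le \alpha-e$ (i.e.\ $j<0$) sit to the left of the dashed line, and the new columns with $j\ge\alpha$ sit to its right. Equivalently, the original column $0$ is still the first column to the right of the dashed line. This is the same bookkeeping as in the remark after Definition \ref{lamPlus}, where $s^+_i=s_i$ was deduced.

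Next I count the changes in row $i$. To the left of the dashed line, every inserted column is full, so no empty positions are added there. To the right of the dashed line, exactly the inserted columns indexed by $j=\alpha,\alpha+e,\dots,\alpha+(k-1)e$ carry beads, which is $k$ new beads. Each of these columns lies in row $i$ as a single position, so row $i$ gains exactly $k$ beads to the right of the line. This is consistent with the $kr$ bead positions below the dashed line on $\tilde\rho$ in Definition \ref{Plusk}: these are $k$ segments of length $r$, one bead per row. Therefore $s^{+k}_i=s_i+k$. I would double-check this against the example, where $\bs=(1,1,2)$, $k=2$, and the displayed abacus gives charges $(3,3,4)$.

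Finally, $s^{+k}_j-s^{+k}_i=s_j-s_i\in[0,e]\subseteq[0,e+1]$ for $i<j$, so $\bs^{+k}\in\overline{\mathcal A}^r_{e+1}$. The main obstacle is the first step: checking carefully, through the segment-stacking description of $\tau_{e+1,r}$ (segments of length $e+1$), that the $kr$ beads below the line on $\tilde\rho$ distribute as exactly $k$ beads per row. The first of these segments sits at row $r$ (the bottom row, by Definition \ref{Uglovmap}, since $x=r$ gives offset $0$) and the next segments proceed upward. One full stack of $r$ positions covers each row once, so $k$ stacks give $k$ per row.
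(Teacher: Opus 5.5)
Your proposal is correct and follows the paper's own argument: apply (\ref{Biject1}) row by row, noting that the inserted columns add no empty positions to the left of the dashed line and exactly $k$ beads to its right in every row, and that the differences $s_j-s_i$ are unchanged. Two slips are harmless but worth fixing: in the paper's convention the row $x=r$ is the top row of the $r$-abacus, not the bottom one; and when $\alpha=0$ the first column to the right of the dashed line is the inserted full column, not the original column $0$ (your statement that inserted columns with $j\ge\alpha$ lie to the right is the correct one).
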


\begin{proof}
By Lemma \ref{bead minus}, for all $1\le i\le r$, assume that the number of beads in $L_{s_i}(\lambda^{(i)})$ on the right side of the dashed vertical line (between position $-1$ and $0$) is $n_{i1}$ and that number of empty positions on the left side of the dashed vertical line is $n_{i_2}$. Note that the number of beads in $L_{s_i^{+k}} ((\lambda)^+)^{(i)})$ on the right side of the dashed vertical line (between position $-1$ and $0$) is  $n_{i1}+k$ and that number of empty positions on the left side of the dashed vertical line is also $n_{i_2}$. Therefore $s_i^{+k}=n_{i_1}+k-n_{i_2}=s_i+k$. Moreover, since $\bs \in \overline{\mathcal A}^r_e$, $s_i^{+k}=s_i+k$, so $\bs \in \overline{\mathcal A}^r_{e+1}$.
\end{proof}

If we require $\bs \in \widetilde{\mathcal A}^r_e(\blam)$, then $\bs^{+k} \in \widetilde{\mathcal A}^r_e (\blam^{+k})$.

\begin{lemma}\label{block moving vector1}
Let $(\blam, \bs) \in B$, $(\blam^{+k}, \bs^{+k}) \in B_1$. Then the block moving vector of $B$ is equal to the block moving vector of $B_1$. In particular, $w(B)=w(B_1)$.
\end{lemma}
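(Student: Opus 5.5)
We follow the proof of Lemma \ref{block moving vector}, working throughout on the $e$-tuple column abacus $\tau_{e,r}(L_{\bs}(\blam))$ and the $(e+1)$-tuple column abacus $\tau_{e+1,r}(L_{\bs^{+k}}(\blam^{+k}))$. The block moving vector of the block containing $(\blam,\bs)$ is read off from the reduced $(e,\bs)$-core. By \cite[Remark 4.3]{JL} and \cite[Definition 4.1.5]{LQ2025}, this core is reached by a sequence of elementary bead moves on $L_{\bs}(\blam)$. Under the Uglov map, each such move becomes the following operation on the column abacus: a bead with an empty slot directly above it on the same runner is shifted up into that slot. The moving vector is then recorded from the resulting configuration, i.e.\ from the numbers of beads on each runner relative to the dashed line together with the charges.

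\emph{Step 1.} Show that the inserted runner $\tilde\rho$ is unaffected by these moves. On $\tilde\rho$ every slot above the dashed line is filled, the first $kr$ slots below it are filled, and all remaining slots are empty. So no bead on $\tilde\rho$ has an empty slot above it, and $\tilde\rho$ is already ``pushed up''. Moreover, $\tilde\rho$ lies strictly between runners of the original abacus and plays no role in up-moves on any other runner, since moves never change runners.

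\emph{Step 2.} Deduce that the sequence of moves from $\tau_{e,r}(L_{\bs}(\blam))$ to its reduced core lifts verbatim to $\tau_{e+1,r}(L_{\bs^{+k}}(\blam^{+k}))$. The result is the reduced core of $(\blam^{+k},\bs^{+k})$, namely the original core with $\tilde\rho$ inserted before $\rho_\alpha$. By Lemma \ref{Uglov}, this is the Uglov image of a pair with multicharge of the form computed in the preceding lemma, so $\bs^{+k}$ shifts each $s_i$ by $k$.

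\emph{Step 3.} Compare the block moving vectors, and handle the main obstacle. Each coordinate of the moving vector is a difference of bead counts, or charges, between consecutive $e$-sections of the core abacus. Passing from the original core to the new one does two things. First, it shifts every $s_i$ by the same constant $k$. Second, it adds the same number of beads ($kr$ below the line) on the single runner $\tilde\rho$. Since the moving vector involves only differences of consecutive charges of the reduced core (as in \cite[Lemma 4.1.11]{LQ2025}), the uniform shift by $k$ cancels. The real work is to check that the extra $kr$ beads on $\tilde\rho$ also cancel. They distribute exactly $k$ beads into each of the $r$ original components, which matches $s_i^{+k}=s_i+k$, so the relative positions are unchanged. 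I expect this bookkeeping to be the main obstacle: one must verify that the reduced core of the enlarged abacus has its $r$ components sitting in the same relative configuration, so that its charges still lie in $\overline{\mathcal A}^r_{e+1}$ with the same pattern of differences.

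Once the two moving vectors are shown to agree, the equality $w(B)=w(B_1)$ follows from \cite[Lemma 4.1.15]{LQ2025}, exactly as in Lemma \ref{block moving vector}.
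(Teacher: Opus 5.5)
Your Steps 1 and 2 are the paper's proof. The paper just repeats the argument of Lemma \ref{block moving vector}: the inserted runner $\tilde\rho$ is already pushed up and does not interfere with the upward bead moves on the column abacus, so the block moving vectors of $B$ and $B_1$ coincide, and $w(B)=w(B_1)$ then follows from \cite[Lemma 4.1.15]{LQ2025}. The bookkeeping you flag as the main obstacle in Step 3 is not needed. The paper deduces equality of the moving vectors directly from the fact that the move sequences are identical, and your own Step 2 already settles the point anyway: the new reduced core is the old one with $\tilde\rho$ inserted, so its multicharge is the old core's multicharge shifted by $(k,\dots,k)$.
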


\begin{proof} This follows from the same argument used in the proof of Lemma \ref{block moving vector}.
%The bead moves defined on \(L_{\boldsymbol{s}}(\boldsymbol{\lambda})\) correspond to the following rule on \(\tau_{e,r}(L_{\boldsymbol{s}}(\boldsymbol{\lambda}))\): if a bead has an empty position directly above it, we may shift the bead into that empty spot. By the properties of the additional runner we inserted, this runner does not interfere with such bead moves.
%Then the block moving vector of $B$ is equal to the block moving vector of $B_1$. In particular, by \cite[Lemma 4.1.15]{LQ2025} $w(B)=w(B_1)$.
\end{proof}

\begin{dfn}\label{rightLeft} On the $e$-tuple column abacus of a partition $\lam$, we say that the position $(k,i)$ at the $k$th row of the $i$-th runner is immediately to the right of the position $(l,j)$ at the $l$th row of the $j$-th runner if
either $k=l, 1\leq i=j+1<e$ or $j=e-1, i=0, k=l+r$. In this case, we also  say that the position $(l,j)$ is immediately to the left of the position $(k,l)$.
\end{dfn}

Using Definition \ref{Uglovmap} and the discussion in the paragraph immediately below it, we see that any bead in the $r$-abacus $L_\bs(\blam)$ has a bead immediately to its right (resp., to its left) if and only if
the image of this bead under the Uglov map $\tau_{e,r}$ has a  bead immediately to its right (resp., to its left) in the $e$-tuple column abacus $\tau_{e,r}( L_\bs(\blam))$ in the sense of Definition \ref{rightLeft}.

\begin{lemma}\label{right}
Let $k\in\N$. Fix $\bs\in \overline{\mathcal A}^r_e$ and an integer \(\alpha\) satisfying \(0\le \alpha<e\). Let $\blam\in\PP_{r,n}$. If $(k-1)e \ge s_r+  n$, then on \(\tau_{e+1,r}(L_{\boldsymbol{s}^{+k}}(\boldsymbol{\lambda}^{+k}))\), every bead on \(\tilde{\rho}_{\alpha-1}\) has another bead immediately to its right, and every bead on \(\tilde{\rho}_{\alpha+1}\) has another bead immediately to its left.
\end{lemma}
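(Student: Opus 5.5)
The plan is to translate the hypothesis $(k-1)e\ge s_r+n$ into a bound on how far below the dashed line any bead of $\tau_{e,r}(L_\bs(\blam))$ can sit. Then I compare that bound with the $kr$ filled slots below the dashed line on the inserted runner $\tilde\rho=\tilde\rho_\alpha$. By Definition \ref{Plusk}, $\tilde\rho_\alpha$ is full in every row $<kr$, where rows are indexed so that row $0$ is the first row below the dashed line. The runners $\tilde\rho_{\alpha-1}$ and $\tilde\rho_{\alpha+1}$ of $\tau_{e+1,r}(L_{\bs^{+k}}(\blam^{+k}))$ are just the old runners $\rho_{\alpha-1}$ and $\rho_\alpha$ of $\tau_{e,r}(L_\bs(\blam))$, with the same bead pattern row by row.

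\emph{Step 1: a row bound on the old abacus.} A bead of $L_\bs(\blam)$ at $(x,y)$ with $y=k'e+c$, $0\le c<e$, is sent by Definition \ref{Uglovmap} to position $(r-x)e+k'er+c$. This is row $(r-x)+k'r$ of runner $\rho_c$. The rightmost bead on the $x$-th row abacus sits at column $s_x+\lambda^{(x)}_1-1$. Since $\bs\in\overline{\mathcal A}^r_e$ gives $s_x\le s_r$, and $\lambda^{(x)}_1\le n$, every bead satisfies $y\le s_r+n-1<(k-1)e$. Hence $k'\le k-2$, and every bead of $\tau_{e,r}(L_\bs(\blam))$ lies in a row at most $(r-1)+(k-2)r=(k-1)r-1$. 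Beads in rows above the dashed line pose no problem, since everything above the dashed line on $\tilde\rho$ is full.

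\emph{Step 2: the case $1\le\alpha\le e-1$ for $\tilde\rho_{\alpha-1}$, and all $\alpha$ for $\tilde\rho_{\alpha+1}$.} By Definition \ref{rightLeft} with $e+1$ runners, the position immediately to the right of a slot on $\tilde\rho_{\alpha-1}$ is the same row of $\tilde\rho_\alpha$. Likewise, the position immediately to the left of a slot on $\tilde\rho_{\alpha+1}$ is the same row of $\tilde\rho_\alpha$. This holds because $\alpha+1\le e$, so no wrap-around occurs. By Step 1 any bead on these runners lies in a row $\le (k-1)r-1<kr$, where $\tilde\rho_\alpha$ carries a bead. This settles both claims here.

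\emph{Step 3: the wrap-around case $\alpha=0$ for the first claim.} Here $\tilde\rho_{\alpha-1}$ must be read as $\tilde\rho_e$, the last runner, which is the old $\rho_{e-1}$. By Definition \ref{rightLeft}, the slot immediately to the right of row $l$ of $\tilde\rho_e$ is row $l+r$ of $\tilde\rho_0=\tilde\rho$. By Step 1, $l+r\le kr-1$, which is still within the filled part of $\tilde\rho$; for $l$ above the dashed line this is automatic. This is exactly where the slack of one extra multiple of $e$ in the hypothesis $(k-1)e\ge s_r+n$ is used, rather than $ke\ge s_r+n$.

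I expect the main obstacle to be the bookkeeping in Step 1 and Step 3. One must make sure the row indexing of the $e$-tuple abacus, the shift by $r$ in Definition \ref{rightLeft}, and the stacking of columns under the Uglov map all align. Concretely, I would first check that the row index of position $m$ on the $e$-tuple abacus with $r$-fold stacking is $\lfloor m/e\rfloor$, and that the $kr$ filled slots of $\tilde\rho$ are exactly rows $0,\dots,kr-1$. The inequality is then a one-line computation.
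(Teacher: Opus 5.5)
Your proof is correct and follows the paper's argument. Both use the same row bound $(k-1)r-1$ for every bead of $\tau_{e,r}(L_\bs(\blam))$, derived from $y\le s_r+n-1<(k-1)e$, and then check that the neighbouring slot on the inserted runner lies in its filled rows, using the shift by $r$ when $\alpha=0$. Your remark that $\tilde\rho_{\alpha+1}$ never wraps around is slightly cleaner than the paper. The paper treats $\alpha=e-1$ as a separate case, and there it names the left neighbour of row $i$ of $\tilde\rho_e$ as row $i-r$ of $\tilde\rho_{e-1}$ rather than row $i$. This slip is harmless, since both slots are filled.
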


\begin{proof}
 Since \(\boldsymbol{s} \in \overline{\mathcal A}^r_e\), we have \(s_1 \le \cdots \le s_r\). On the $r$-abacus of \((\blam,\boldsymbol{s})\), the rightmost bead appear in column $a$ only if \(a\leq s_r +\tilde n - 1\).

If $s_r+ \tilde n-1\le (k-1)e-1$, beads in the $e$-tuple column abacus of $\tau_{e,r}(L_{\boldsymbol{s}}(\boldsymbol{\lambda}))$ appear in row $b$ only if $b\leq (k-1)r-1$, where row $0$ is immediately below the dashed line and row indices increase downward. Notice that the newly added column contains beads in all rows from $0$ to $kr-1$.

Assume $1\leq\alpha\leq e-1$. On the $(e+1)$-tuple column abacus of $\tau_{e+1,r}(L_{\boldsymbol{s}^{+k}}(\boldsymbol{\lambda}^{+k}))$, if the position $(i,\alpha-1)$ in column $\alpha-1$ contains a bead, then $(i,\alpha)$ must contain a bead. Similarly, if $(k,\alpha+1)$ in column $\alpha+1$ contains a bead, then $(k,\alpha)$ must also contain a bead.

Assume $\alpha=0$. On the $(e+1)$-tuple column abacus of $\tau_{e+1,r}(L_{\boldsymbol{s}^{+k}}(\boldsymbol{\lambda}^{+k}))$, if the position $(i,e)$ in column $\alpha-1$ contains a bead, then the position $(i+r, 0)$ in column $\alpha$ contains a bead; if the position $(i, 1)$ in column $\alpha+1$ contains a bead, then the position $(i, 0)$ in column $\alpha$ contains a bead.

Assume $\alpha=e-1$. On the $(e+1)$-tuple column abacus of $\tau_{e+1,r}(L_{\boldsymbol{s}^{+k}}(\boldsymbol{\lambda}^{+k}))$, if the position $(i,e-2)$ in column $\alpha-1$ contains a bead, then the position $(i,e-1)$ in column $\alpha$ contains a bead; if the position $(i,e)$ in column $\alpha+1$ contains a bead, then the position $(i-r, e-1)$ in column $\alpha$ contains a bead.
\end{proof}

\begin{lemma}\label{no+k}
Fix $k\in\N, \bs\in \overline{\mathcal A}^r_e$ and an integer \(\alpha\) satisfying \(0\le \alpha<e\). Let $\bmu\in\PP_{r,n}$, If \((k-1)e \ge s_r +n\), then \(L_{\boldsymbol{s}^{+k}}(\boldsymbol{\lambda}^{+k})\) admits no addable \(\alpha\)-nodes and no removable \((\alpha+1)\)-nodes.
\end{lemma}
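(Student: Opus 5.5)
We work directly on the $r$-abacus $L_{\bs^{+k}}(\blam^{+k})$ and use Lemma \ref{add and rem}. The statement says $\blam^{+k}$, although $\bmu$ is what is introduced; we read it as a statement about the arbitrary $r$-partition $\blam$ (equivalently $\bmu$). Residues are now taken modulo $e+1$. Under this reading, there are two claims to check.

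An addable $\alpha$-node is a row $a$ and a column $b\equiv\alpha\pmod{e+1}$ such that $(a,b-1)$ carries a bead and $(a,b)$ is empty. A removable $(\alpha+1)$-node is a position with $(a,b)$ empty and $(a,b+1)$ occupied, again with $b\equiv\alpha\pmod{e+1}$. So both claims concern the inserted columns $b\equiv\alpha\pmod{e+1}$ (runner $\tilde\rho_\alpha$ after the Uglov map) and their immediate left and right neighbours. These neighbours are the columns of runners $\tilde\rho_{\alpha-1}$ and $\tilde\rho_{\alpha+1}$, interpreted cyclically in the sense of Definition \ref{rightLeft} when $\alpha=0$.

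We translate to the $(e+1)$-tuple column abacus $\tau_{e+1,r}(L_{\bs^{+k}}(\blam^{+k}))$ by the remark after Definition \ref{rightLeft}: a bead in the $r$-abacus has a bead immediately to its left or right exactly when its Uglov image does. The two claims then read as follows.

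\textbf{(i)} Every bead on $\tilde\rho_{\alpha-1}$ has a bead immediately to its right, i.e.\ on $\tilde\rho_\alpha$. Otherwise the configuration would be a bead at $(a,b-1)$ with $(a,b)$ empty, which is an addable $\alpha$-node.

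\textbf{(ii)} Every bead on $\tilde\rho_{\alpha+1}$ has a bead immediately to its left, on $\tilde\rho_\alpha$. This excludes an empty $(a,b)$ followed by a bead at $(a,b+1)$, which is a removable $(\alpha+1)$-node.

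Both are precisely the conclusions of Lemma \ref{right}, whose hypothesis $(k-1)e\ge s_r+n$ is the one assumed here. So the proof is: apply Lemma \ref{right} and translate back via Lemma \ref{add and rem}.

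The main point needing care is the bookkeeping at the wraparound cases $\alpha=0$ and $\alpha=e-1$. There the neighbour of a position on $\tilde\rho_\alpha$ lies on runner $\tilde\rho_e$ or $\tilde\rho_0$, shifted by $r$ rows. One must check that "immediately to the left or right" in Definition \ref{rightLeft} matches columns $b\pm1$ of the $r$-abacus, including the passage between consecutive blocks of $e+1$ columns. This follows from the description of $\tau_{e+1,r}$, which stacks the segments $\{a(e+1),\dots,a(e+1)+e\}$ and changes the row index by $r$ across a block boundary.

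Finally, we should confirm that positions above the dashed line create no exceptions. Above the line, $\tilde\rho_\alpha$ is fully beaded, so (i) holds trivially there. For (ii), the beaded region of $\tilde\rho_\alpha$ extends $kr$ rows below the line. This exceeds the deepest bead on the neighbouring runners, since those beads lie in rows at most $(k-1)r-1$ by the bound $s_r+n-1\le(k-1)e-1$ used in Lemma \ref{right}.
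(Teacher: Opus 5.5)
Your proposal is correct and follows the paper's own argument: the paper's proof is the one-line observation that the lemma is a direct consequence of Lemma~\ref{right}, and you make this explicit via Lemma~\ref{add and rem}, noting that an addable $\alpha$-node (resp.\ removable $(\alpha+1)$-node) of $\blam^{+k}$ would be exactly a bead on $\tilde\rho_{\alpha-1}$ (resp.\ $\tilde\rho_{\alpha+1}$) with no bead immediately to its right (resp.\ left) on the inserted runner $\tilde\rho_\alpha$. One harmless slip: only $\alpha=0$ involves the shift by $r$ rows, since for $\alpha=e-1$ the runners $\tilde\rho_{e-1}$ and $\tilde\rho_e$ are adjacent in the same row.
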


\begin{proof}
This is a direct consequence of Lemma \ref{right}.
\end{proof}

\medskip
\subsection{Proof of the second main result}

\begin{prop}\label{PlusRegular2}
Let $k\in\N, \bs\in \overline{\mathcal A}^r_e$ and an integer an integer \(\alpha\) satisfying \(0\le \alpha<e\).  Let $\bmu\in\PP_{r,n}$. If $(k-1)e \ge s_r+ n$ and $\bmu$ is an $(e,\bs)$-regular $r$-partition, then $\bmu^{+k}$ is an $(e+1,\bs^{+k})$-regular $r$-partition.
\end{prop}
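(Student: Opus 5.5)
We argue by induction on $n$, following the proof of Proposition \ref{PlusRegular} closely. The empty runner is replaced by the runner $\tilde\rho$, and Lemma \ref{no+} is replaced by Lemma \ref{no+k}. Note that the hypothesis $(k-1)e\ge s_r+n$ is inherited when $n$ decreases, so the induction hypothesis is available for every $r$-partition obtained by removing a node.

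\textbf{Base case $n=0$.} By Lemma \ref{block moving vector1}, the block of $(\bvarnothing^{+k},\bs^{+k})$ has weight $w=0$. By \cite[Theorem 4.1]{F1} this block is simple, so $\bvarnothing^{+k}$ is $(e+1,\bs^{+k})$-regular.

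\textbf{Inductive step.} Let $\gamma$ be a good $i$-node of $\bmu$ such that $\blam:=\bmu\setminus\{\gamma\}$ is $(e,\bs)$-regular. By induction, $\blam^{+k}$ is $(e+1,\bs^{+k})$-regular. Let $\gamma$ correspond to the bead at $(a,b)$ with $(a,b-1)$ empty in $L_\bs(\bmu)$, where $b=i+ce$.

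\emph{Case $i\ne\alpha$.} The columns $\col_{j-1},\col_j$ with $j\equiv i\pmod e$ are carried unchanged to adjacent columns of $L_{\bs^{+k}}(\bmu^{+k})$. The shift depends only on $c$: an index $j$ goes to $j+c'$, where $c'$ counts the inserted columns to its left, and the same shift applies to both columns of the pair. Two residue relabellings occur. For $i>\alpha$ the residue becomes $i+1$, as in Definition \ref{tildeFi1}. Since $s_j^{+k}=s_j+k$, the residue bookkeeping must be rechecked, but the column-position description via Lemma \ref{add and rem} makes this automatic. Hence the addable and removable $i$-nodes of $\bmu$ correspond bijectively, and in a $\succ$-order-preserving way, to the addable and removable nodes of the corresponding residue of $\bmu^{+k}$. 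The image $\gamma'$ of $\gamma$ is therefore good, and $\bmu^{+k}\setminus\{\gamma'\}=\blam^{+k}$, which is regular by induction.

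\emph{Case $i=\alpha$.} Here the columns $\col_{j-1}(\bmu)$ and $\col_j(\bmu)$ become $\col_{j'-1}(\bmu^{+k})$ and $\col_{j'+1}(\bmu^{+k})$, with the new column $\col_{j'}$ inserted between them. This column is full for the relevant rows, since by the hypothesis all beads lie in rows below $(k-1)r$. By Lemma \ref{no+k}, $\bmu^{+k}$ has no addable $\alpha$-nodes. So by Lemma \ref{Kleequ} it suffices to show that $\sigma_\alpha(\bmu^{+k})$ is regular.

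Applying $\sigma_\alpha$ swaps $\col_{j'-1}$ and $\col_{j'}$. This places the full column to the left, and the old pair $\col_{j-1}(\bmu),\col_j(\bmu)$ now sits in adjacent columns $j',j'+1$, which have residue $\alpha+1$. The image of $\gamma$ is then a removable $(\alpha+1)$-node $\gamma''$ of $\sigma_\alpha(\bmu^{+k})$. This node is good, again by the order-preserving bijection of $\alpha$-signatures of $\bmu$ with $(\alpha+1)$-signatures of $\sigma_\alpha(\bmu^{+k})$.

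Removing $\gamma''$ gives $\sigma_\alpha(\blam^{+k})$. By Lemma \ref{no+k} applied to $\blam$, the partition $\blam^{+k}$ has no addable $\alpha$-nodes, so Lemma \ref{Kleequ} together with the induction hypothesis shows that $\sigma_\alpha(\blam^{+k})$ is regular. Hence $\sigma_\alpha(\bmu^{+k})$, and therefore $\bmu^{+k}$, is $(e+1,\bs^{+k})$-regular.

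\textbf{Main obstacle.} The hard part is the signature comparison in the case $i=\alpha$. We must check that the extra $(\alpha+1)$-nodes of $\sigma_\alpha(\bmu^{+k})$ do not alter the reduced signature. These extra nodes come from the boundary of the full column near rows $kr-1$ and $kr$, and from the leftmost negative columns. Lemma \ref{right} shows that every bead adjacent to $\tilde\rho$ has a neighbour, and this is what rules out such nodes. I would first verify directly, using Lemma \ref{right}, that every $(\alpha+1)$-addable or removable node of $\sigma_\alpha(\bmu^{+k})$ comes from an $\alpha$-node of $\bmu$. In particular, the transition at rows $kr-1$ and $kr$ should produce nothing, because no beads of $\bmu$ lie that far down.
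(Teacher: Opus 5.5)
Your proposal is correct and follows the paper's proof essentially step for step: induction on $n$, the weight-zero base case, an order-preserving correspondence of signatures when $i\neq\alpha$, and Lemma \ref{Kleequ} when $i=\alpha$. The only difference is that you reflect by $\sigma_\alpha$, using that $\bmu^{+k}$ and $\blam^{+k}$ have no addable $\alpha$-nodes, while the paper reflects by $\sigma_{\alpha+1}$, using that they have no removable $(\alpha+1)$-nodes; both halves of Lemma \ref{no+k} work equally well.

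The obstacle you flag at the end does not arise. After the swap, every pair of columns defining an $(\alpha+1)$-node of $\sigma_\alpha(\bmu^{+k})$ consists of two original columns of $L_\bs(\bmu)$. So the signature bijection is exact, and the hypothesis $(k-1)e\ge s_r+n$ is needed only to invoke Lemma \ref{no+k}.
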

\begin{proof}
We prove this by induction on $n$. If $n=0$. Then $(\bvarnothing,\bs)$ is the unique $r$-partition of $0$. Since $w(\bvarnothing,\bs)=0$, by Lemma \ref{block moving vector1}, $w(\bvarnothing^{+k},\bs^{+k})=0$.  Applying \cite[Theorem 4.1]{F1}, we see that $(\bvarnothing^{+k},\bs^{+k})$ lies in a simple block. Hence $(\bvarnothing^{+k}, \bs^{+k})$ is an $(e+1,\mathbf{s}^{+k})$-regular $r$-partition.

Suppose that the statement holds for $n = l - 1, l \ge 1$. Now we assume $n=l$. Because $(\bmu, \bs)$ is an $(e,\bs)$-regular $r$-partition, there exists $\gamma\in [\bmu]$, $\gamma$ is a good $i$-node of $\bmu$ with respect to $\succ$, $[\blam]$ is obtained from $[\bmu]$ by removing $\gamma$. $\blam$ is also an $(e,\bs)$-regular $r$-partition. Let $\bmu^{+k},\blam^{+k}$ both be defined using $\bs$ and $\alpha$ as in Definition \ref{Plusk}.
Assume that the removable $i$-node $\gamma$ of $\bmu$ corresponds to the empty position in $(a, b-1)$ and the occupied position $(a, b)$ in $L_\bs (\bmu)$, where $b=i+ce$, $0\leq i<e+1, c\in\mathbb{Z}$. Set $b'=i+c(e+1)$.

Suppose $i<\alpha$. Then for any integer $j$ satisfying $j\equiv i\pmod e$, the columns $\col_{j-1}(\bmu)$ and $\col_j(\bmu)$ for $L_\bs(\bmu)$ are the same as the columns $\col_{j-1}(\bmu^{+k})$ and $\col_j(\bmu^{+k})$ for $L_{\bs^{+k}}(\bmu^{+k})$. By definition of $\bmu^{+k}$, the positions $(a, b'-1)$ (resp., $(a, b')$) in $L_{\bs^{+k}}(\bmu^{+k})$ can be identified with the positions $(a, b-1)$ (resp., $(a, b)$) in $L_\bs (\bmu)$. Therefore, in $L_{\bs^{+k}} (\bmu^{+k})$, position $(a, b'-1)$ is empty, position $(a, b')$ has a bead. This correspond to a removable $i$-node $\gamma'$ of $\bmu^{+k}$. By Lemma \ref{add and rem}, the addable $i$-nodes and removable $i$-nodes for $\blam$ correspond exactly to the addable $(i+1)$-nodes and removable $(i+1)$-nodes for $\blam^{+k}$ respectively. Moreover, this correspondence preserves the order ``$\succ$''.
Therefore, $\gamma'$ is  a good $i$-node of $\bmu^{+k}$.
In $L_{\bs^{+k}}(\boldsymbol{\mu}^{+k})$, moving the bead at $(a, b')$ to the position $(a, b'-1)$ we obtain $L_{\bs^{+k}}(\boldsymbol{\lambda}^{+k})$. By the inductive hypothesis, ${\blam}^{+k}$ is an $(e+1,\bs^{+k})$-regular $r$-partition. Therefore $\bmu^{+k}$ is an $(e+1,\bs+k)$-regular $r$-partition. Suppose $i>\alpha$. Then a similar argument analogous to the case where $i < \alpha$ proves the lemma.

Suppose $i=\alpha$. Then for any integer $j$ satisfying $j\equiv i\pmod e$, the columns $\col_{j-1}(\bmu)$ and $\col_j(\bmu)$ for $L_\bs(\bmu)$ are the same as the columns $\col_{j-1}(\bmu^{+k})$ and $\col_{j+1}(\bmu^{+k})$ for $L_{\bs^{+k}}(\bmu^{+k})$. By Lemma \ref{no+k}, $\bmu^{+k}$ has no removable $(i+1)$-nodes. By Lemma \ref{Kleequ}, $\bmu^{+k}$ is $(e+1,\bs^{+k})$-regular if and only if $\sigma_{i+1}(\bmu^{+k})$ is  $(e+1,\bs^{+k})$-regular.
Then the columns $\col_{j-1}(\sigma_{i+1}(\bmu^{+k}))$ and $\col_j(\sigma_{i+1}(\bmu^{+k}))$ for $L_{\bs^{+k}}(\sigma_{i+1}(\bmu^{+k}))$ are the same as the columns $\col_{j-1}(\bmu^{+k})$ and $\col_{j+1}(\bmu^{+k})$ for $L_{\bs^{+k}}(\bmu^{+k})$ respectively, hence are also the same as the columns $\col_{j-1}(\bmu)$ and $\col_j(\bmu)$ for $L_\bs(\bmu)$ respectively. Therefore, in $L_{\bs^{+k}}(\sigma_{i+1}(\bmu^{+k}))$, position $(a, b'-1)$ is empty and position $(a, b')$ has a bead. This corresponds to a removable $i$-node $\gamma''$ of $\sigma_{i+1}(\bmu^{+k})$. By a similar argument used in the last paragraph, we see $\gamma''$ is  a good $i$-node of $\sigma_{i+1}(\bmu^{+k})$. On $L_{\bs^{+k}}(\sigma_{i+1}(\boldsymbol{\mu}^{+k}))$, moving the bead at $(a, b')$ to the position $(a, b'-1)$, we denote the resulting abacus by $L_{\bs^{+k}}(\boldsymbol{\nu})$.
So $\sigma_{i+1}(\bmu^{+k})$ is $(e+1,\bs^{+k})$-regular if and only if $\bnu$ is $(e+1,\bs^{+k})$-regular. By Lemma \ref{no+k}, $L_{\bs^{+k}}(\boldsymbol{\lambda}^{+k})$ has no removable $(i+1)$-nodes. By definition and construction, we have $L_{\bs^{+k}}(\sigma_{i+1} (\blam^{+k}))=L_{\bs^{+k}}(\bnu)$ and hence $\sigma_{i+1} (\blam^{+k})=\bnu$. Since ${\blam}^{+k}$ is $(e+1,\bs^{+k})$-regular (by inductive hypothesis), $\bnu=\sigma_{i+1}(\blam^{+k})$ is $(e+1,\bs^{+k})$-regular (by Lemma \ref{Kleequ}). It follows that $\sigma_{i+1}(\bmu^{+k})$ and hence $\bmu^{+k}$ is an $(e+1,\bs^{+k})$-regular $r$-partition.
\end{proof}

Fix $k\in\N, \bs\in \overline{\mathcal A}^r_e$ and an integer $0 \le \alpha < e$. Let $\blam\in\PP_{r,n}$. Let $\blam^{+k}$ be defined using $\bs$ and $\alpha$ as in Definition \ref{Plusk}. Recall that \(\mathcal{F}_\bs\) is the Fock space \(\mathcal{F}_\bs=\bigoplus_{\substack{n\in\mathbb{N}\\ \lambda\in\mathcal{P}_{r,n}}}\mathbb{Q}(v)s_\lambda,\) viewed as a left \(U_v(\widehat{\mathfrak{sl}}_e)\)-module. We then define \(\mathcal{F}_\bs^{+k}:=\mathcal{F}_\bs\) as a \(\mathbb{Q}(v)\)-vector space, but endow \(\mathcal{F}_\bs^{+k}\) with the structure of a left \(U_v(\widehat{\mathfrak{sl}}_{e+1})\)-module.

\begin{definition}\label{tildeFi} Fix $k\in\N, \bs\in \overline{\mathcal A}^r_e$ and an integer $0 \le \alpha < e$. Let $\blam\in\PP_{r,n}$. Let $\blam^{+k}$ be defined using $\bs$ and $\alpha$ as in Definition \ref{Plusk}. For each $0\leq i<e$ and $a\in \Z_{\geq 1}$, let ${^\alpha}\!\tilde{F}_i: \mathcal F_\bs^{+k} \to \mathcal F_{\bs}^{+k}$ be the $\Q(v)$-linear map which is uniquely determined by:
$$
{^\alpha}\! \tilde F^{(a)}_i :=
\begin{cases}
\tilde{F}^{(a)}_i  & \text{if}\ 0\le i<\alpha; \\
\tilde{F}^{(a)}_{i} \tilde{F}^{(a)}_{i+1}  &\text{if}\ i=\alpha; \\
\tilde{F}^{(a)}_{i+1}  & \text{if}\ \alpha<i<e.
\end{cases}.
$$
Let $\varTheta^{(k)}: \mathcal F_\bs \to \mathcal F_\bs^{+k}$ be the $\Q(v)$-linear map which is uniquely determined by $\varTheta(s_\blam)=s_{\blam^{+k}}$ for any $\blam\in\PP_{r,n}$.
\end{definition}
Note the difference between the definitions of ${^\alpha}\!{F}_i$ and ${^\alpha}\!\tilde{F}_i$  (Definitions \ref{tildeFi1} and \ref{tildeFi}) in the case when $i=\alpha$.

\begin{lemma}\label{expansion+}
Fix $\bs\in \overline{\mathcal A}^r_e$ and an integer $0 \le \alpha < e$.
Let $\blam\in\PP_{r,n}$.
Let $\blam^{+k}$ be defined using $\bs$, $\alpha$ and $k$ as in Definition \ref{Plusk}.
Let $i:=\alpha$.
If \((k-1)e \ge s_r +n\), then $s_\bmu$ occurs in the expansion of ${^\alpha}\!\tilde F_i^{(a)}\circ \varTheta(s_\blam)=\tilde{F}^{(a)}_i\tilde{F}^{(a)}_{i+1} s_{\blam^{+k}}$ only if $\bmu=\bnu^{+k}$ for some $\bnu\in\PP_{r,n}$
\end{lemma}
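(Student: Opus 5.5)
Following the proof of Lemma \ref{expansion}, with the roles of $i$ and $i+1$ interchanged, I will argue directly on the abacus $L_{\bs^{+k}}(\blam^{+k})$. Suppose $s_\bmu$ occurs in $\tilde F^{(a)}_i\tilde F^{(a)}_{i+1}s_{\blam^{+k}}$. Then there is an $r$-partition $\brho$ with $\blam^{+k}\xrightarrow{a:i+1}\brho\xrightarrow{a:i}\bmu$. By Lemma \ref{add and rem}, the first step moves $a$ beads from columns $\equiv \alpha$ to columns $\equiv\alpha+1 \pmod{e+1}$. The columns $\equiv\alpha \pmod{e+1}$ of $L_{\bs^{+k}}(\blam^{+k})$ are exactly the inserted columns. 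These are full for $j\le \alpha+(k-1)e$ (in the old indexing) and empty for $j\ge \alpha+ke$. Lemma \ref{right} (equivalently Lemma \ref{no+k}) says every bead of $\tilde\rho_{\alpha+1}$ has a bead immediately to its left, so no bead of $\tilde\rho_{\alpha+1}$ can move left. In the other direction, a bead on the inserted runner $\tilde\rho_\alpha$ with an empty slot immediately to its right gives an addable $(i+1)$-node. So $\brho$ is obtained by moving $a$ beads from the inserted runner onto $\tilde\rho_{\alpha+1}$, say at positions $(a_d,b_d)\to(a_d,b_d+1)$ with $b_d\equiv\alpha\pmod{e+1}$.

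Next, $\brho\xrightarrow{a:i}\bmu$ moves $a$ beads from columns $\equiv\alpha-1$ into columns $\equiv\alpha\pmod{e+1}$. The only empty slots on the inserted runner that could receive beads are the $a$ vacated slots $(a_d,b_d)$, together with the slots that were originally empty on the inserted runner. The key point is to rule out the latter. By the hypothesis $(k-1)e\ge s_r+n$, the originally empty inserted slots lie in rows $\ge kr$ of the column abacus. Every bead of $\tilde\rho_{\alpha-1}$ lies in rows $\le (k-1)r-1$, as shown in the proof of Lemma \ref{right}, with the appropriate shift by $r$ when $\alpha=0$. Hence no bead of $\tilde\rho_{\alpha-1}$ sits immediately to the left of an originally empty inserted slot. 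Since $\brho$ and $\blam^{+k}$ agree on $\tilde\rho_{\alpha-1}$, the $a$ addable $i$-nodes used must refill exactly the $a$ vacated slots $(a_d,b_d)$. This forces $(a_d,b_d-1)$ to carry a bead and $(a_d,b_d+1)$ to be empty in $L_{\bs^{+k}}(\blam^{+k})$.

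The net effect on the abacus is that the inserted runner returns to its original state, and for each $d$ a bead jumps from $(a_d,b_d-1)$ to $(a_d,b_d+1)$. In terms of $L_\bs(\blam)$, after deleting the inserted columns, this is a bead moving from $(a_d,\alpha-1+c_de)$ to the empty slot $(a_d,\alpha+c_de)$, that is, an addable $\alpha$-node $\gamma_d$ of $\blam$. These $\gamma_d$ are distinct because the positions $(a_d,b_d)$ are distinct. Setting $\bnu:=\blam\cup\{\gamma_1,\dots,\gamma_a\}$ and using that Definition \ref{Plusk} inserts the same runner regardless of the partition, the abacus of $\bmu$ coincides with $\tau_{e+1,r}$ of the abacus of $(\bnu^{+k},\bs^{+k})$. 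Lemma \ref{Uglov} then gives $\bmu=\bnu^{+k}$. Here $\bnu\in\PP_{r,n+a}$, so the "$\PP_{r,n}$" in the statement should be read loosely, exactly as in Lemma \ref{expansion}.

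The main obstacle is the row bound that excludes the originally empty inserted slots, particularly the boundary case $\alpha=0$, where "immediately to the left" wraps from $\tilde\rho_e$ with a row shift of $r$. I would handle it by tracking rows exactly as in the proof of Lemma \ref{right}. The hypothesis bounds the rightmost bead column by $s_r+n-1\le (k-1)e-1$, which leaves a margin of $r$ rows below the full part of the inserted runner and absorbs the shift.
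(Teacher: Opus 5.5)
Your proposal is correct and follows essentially the same route as the paper. You factor through $\blam^{+k}\xrightarrow{a:i+1}\brho\xrightarrow{a:i}\bmu$, use that $\blam^{+k}$ has no addable $\alpha$-nodes (the first half of Lemma \ref{no+k}, which you re-derive from the row bound in the proof of Lemma \ref{right}, including the row shift by $r$ when $\alpha=0$) to force the $i$-nodes to refill exactly the vacated slots $(a_d,b_d)$, and then read off addable $\alpha$-nodes of $\blam$; your remark that $\bnu$ actually lies in $\PP_{r,n+a}$ is also right.
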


\begin{proof}
We show that $s_\bmu$ occurs in the expansion of ${^\alpha}\!\tilde F_i^{(a)}\circ \varTheta(\blam)=\tilde{F}^{(a)}_i\tilde{F}^{(a)}_{i+1} s_{\blam^{+k}}$ only if $\bmu=\bnu^{+k}$ for some $\bnu\in\PP_{r,n}$. In fact, assume that $s_\bmu$ occurs in the expansion of $\tilde{F}^{(a)}_i\tilde{F}^{(a)}_{i+1} s_{\blam^{+k}}$.
Then there exists an $r$-partition $\boldsymbol{\rho}$ such that $s_{\boldsymbol{\rho}}$ occurs in $\tilde{F}^{(a)}_{i+1} s_{\blam^{+k}}$ and $s_\bmu$ occurs in $\tilde{F}^{(a)}_{i}s_{\boldsymbol{\rho}}$.
In particular, we have $\blam^{+k}\xrightarrow{a:i+1} {\boldsymbol{\rho}} \xrightarrow{a:i} \bmu$.
We write $\brho=\blam^{+k}\cup\{\gamma^+_1, \cdots, \gamma^+_a\}$.
Suppose that for each $1\leq d\leq a$, adding $\gamma^+_d$ to $\blam^{+k}$ corresponds to moving the bead at $(a_d, b_d)$ to $(a_d, b_d+1)$ on $L_\bs(\blam^{+k})$, where $b_d=i+c_d(e+1)$.
By Lemma \ref{no+k}, $\blam^{+k}$ has no addable $(i+1)$-nodes. By assumption, ${\boldsymbol{\rho}} \xrightarrow{a:i} \bmu$.
It follows that $\blam^{+k} \cup \{\gamma^+_1, \cdots, \gamma^+_a\}$ has precisely $a$ addable $i$-nodes and $\blam^{+k}\cup \{\gamma^+_d\}$ has a unique addable $i$-node. We denote this addable $i$-node by $\tilde \gamma_d$. It corresponds to the configuration on $L_\bs(\blam^{+k}\cup \{\gamma^+_d\})$ where $(a_d, b_d-1)$ is occupied by a bead and $(a_d, b_d)$ is empty. This implies that on $L_\bs(\blam^{+k})$ for each $1\leq d\leq a$, the position $(a_d, b_d-1)$ is occupied by a bead while $(a_d, b_d+1)$ is empty.
Consequently, on $L_\bs(\blam)$, the position $(a_d, i-1+c_de)$ is occupied by a bead and $(a_d, i+c_de)$ is empty, which corresponds to an addable $i$-node $\gamma_d$ of $\blam$. Setting $\boldsymbol{\eta}=\blam\cup \{\gamma_1, \cdots, \gamma_d\}$, we see that $\bmu=\boldsymbol{\eta}^{+k}$. This completes the proof of our claim.
\end{proof}

\begin{prop}\label{diagram commute2}
Let $k\in\N$ and $(\bmu,\bs)$ be a given pair, where $\bmu\in\PP_{r,n}, \bs\in \overline{\mathcal A}^r_e$. Fix an integer \(\alpha\) satisfying \(0\le \alpha<e\). Suppose $(k-1)e\ge s_r+ n$. Then for any $a\in\Z_{\geq 1}$ and $0\le i<e$, the following diagram commutes:
\[
\xymatrix{
\mathcal{F}_{\bs} \ar[rr]^{F^{(a)}_i} \ar[dd]_{\varTheta^{(k)}} & & \mathcal{F}_{\bs} \ar[dd]^{\varTheta^{(k)}} \\
& & \\
\mathcal{F}_{\bs}^{+k} \ar[rr]^{{{^\alpha}\!\tilde{F}}^{(a)}_i} & & \mathcal{F}_{\bs}^{+k}
}.
\]
\end{prop}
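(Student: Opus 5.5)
We will mirror the proof of Proposition \ref{diagram commute}. Fix $\blam\in\PP_{r,n}$ and $0\le i<e$, and compare the coefficients of $\varTheta^{(k)}\circ F_i^{(a)}(s_\blam)$ and ${^\alpha}\!\tilde F_i^{(a)}\circ\varTheta^{(k)}(s_\blam)$. This requires two things: the same $r$-partitions $\bnu^{+k}$ must occur on both sides, and the exponents $N$ must agree. Both exponents will be computed with Proposition \ref{Ncomps}, split into contributions $N[d]$ from the added nodes $\gamma_1,\dots,\gamma_a$.

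\emph{Cases $i<\alpha$ and $i>\alpha$.} By the description below Definition \ref{Plusk}, the columns $\col_{j-1}(\blam),\col_j(\blam)$ with $j\equiv i\pmod e$ reappear unchanged as adjacent columns of $L_{\bs^{+k}}(\blam^{+k})$. These columns have residue $i$ (resp.\ $i+1$) modulo $e+1$, and their row index is unchanged. Hence, by Lemma \ref{add and rem}, the addable and removable $i$-nodes of $\blam$ are in order-preserving bijection with the addable and removable $i$-nodes (resp.\ $(i+1)$-nodes) of $\blam^{+k}$. This bijection is compatible with $\blam\mapsto\blam^{+k}$ and $\bnu\mapsto\bnu^{+k}$. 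Consequently the terms in $\tilde F^{(a)}_i s_{\blam^{+k}}$ (resp.\ $\tilde F^{(a)}_{i+1}s_{\blam^{+k}}$) are exactly the $s_{\bnu^{+k}}$ with $\blam\xrightarrow{a:i}\bnu$, and $N_i(\blam^{+k},\bnu^{+k})[d]=N_i(\blam,\bnu)[d]$ term by term.

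\emph{Case $i=\alpha$.} Here $\col_{j-1}(\blam)$ and $\col_j(\blam)$ become $\col_{j-1}$ and $\col_{j+1}$ of the new abacus, with the inserted column $\col_j$ between them. Since $(k-1)e\ge s_r+n$, the inserted column is full at every position where beads of $\blam$ can move. By Lemma \ref{expansion+}, only terms of the form $s_{\bnu^{+k}}$ occur in $\tilde F_i^{(a)}\tilde F_{i+1}^{(a)}s_{\blam^{+k}}$. Each addable $i$-node $\gamma_d$ of $\blam$, given by a bead at $(a_d,b_d-1)$ with $(a_d,b_d)$ empty, corresponds to a two-step move in $L_{\bs^{+k}}(\blam^{+k})$. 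First the bead in the full inserted column moves right, which adds an $(i+1)$-node $\gamma_d^+$. Then the bead from $\col_{j-1}$ moves into the vacated slot, which adds an $i$-node $\tilde\gamma_d$. For a given $\bnu$ this factorisation is unique, because $\blam^{+k}$ has no addable $\alpha$-nodes (Lemma \ref{no+k}). So the coefficient of $s_{\bnu^{+k}}$ on the right is the single monomial $v^{N_{i+1}(\blam^{+k},\bxi)+N_i(\bxi,\bnu^{+k})}$, where $\bxi=\blam^{+k}\cup\{\gamma^+_1,\dots,\gamma^+_a\}$.

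We then compute the two exponents node by node. Let $A=\add_i(\blam)\uparrow_{\gamma_d}$ and $B=\rem_i(\blam)\uparrow_{\gamma_d}$. In $\blam^{+k}$ the addable $(i+1)$-nodes above $\gamma^+_d$ are of two kinds. The first kind are images of addable $i$-nodes of $\blam$, a set in bijection with $A$. The second kind are spurious ones, at positions where $\col_j(\blam)$ is empty; call their number $l$. By Lemma \ref{no+k}, $\blam^{+k}$ has no removable $(i+1)$-nodes. This gives
\[
N_{i+1}(\blam^{+k},\bxi)[d]=\#A+l \quad\text{(adjusted for nodes already added)}.
\]
For the second step, the removable $i$-nodes of $\bxi$ above $\tilde\gamma_d$ are the images of $B$ together with the same $l$ spurious positions. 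Moreover, $\bnu^{+k}$ has no extra addable $i$-nodes there. Hence $N_i(\bxi,\bnu^{+k})[d]=-\#B-l$ plus the matching correction. Summing over $d$ gives $\#A-\#B=N_i(\blam,\bnu)[d]$.

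The main obstacle is this last bookkeeping, namely verifying that the spurious counts $l$ coincide in the two steps. This needs care at the wrap-around when $\alpha=0$ or $\alpha=e-1$, where Definition \ref{rightLeft} shifts rows by $r$, and Lemma \ref{right} will be used to control those boundary positions. A second delicate point is the contributions of the other components $j<J_\gamma$ in Proposition \ref{Ncomps}, which must cancel in the same way.
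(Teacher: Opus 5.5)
Your proposal is correct and follows essentially the same route as the paper. The cases $i\neq\alpha$ go through the order-preserving bijection of $i$-nodes (resp.\ $(i+1)$-nodes), and the case $i=\alpha$ goes through the unique two-step factorisation $\blam^{+k}\xrightarrow{a:i+1}\bxi\xrightarrow{a:i}\bnu^{+k}$, with the addable $(i+1)$-nodes and removable $i$-nodes of $\blam^{+k}$ split into images of nodes of $\blam$ plus the common ``spurious'' positions where $\col_{j-1}(\blam)$ and $\col_j(\blam)$ are both empty, whose count $l$ cancels.

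The points you flag as delicate are harmless. In the row abacus the inserted column sits between the images of $\col_{j-1}(\blam)$ and $\col_j(\blam)$ in the same row, so there is no wrap-around, and the relation ``above'' already accounts for earlier components. Also, $\bnu^{+k}$ has no addable $\alpha$-nodes at all, so no correction term appears in the second step, provided the first-step count uses $\add_i(\bnu)\uparrow_{\gamma_d}$ rather than $\add_i(\blam)\uparrow_{\gamma_d}$.
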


\begin{proof}

By Lemma \ref{no+}, $\blam^+$ has no removable $i$-nodes.
Therefore, if we let $l$ denote the number of positions \((x,y)\) which lie to the right and below the position \((a_d,b_d)\) in \(L_\bs(\blam)\) and such that both \((x,y-1)\) and \((x,y)\) are occupied by beads in \(L_\bs(\blam)\), then $N_i(\blam^+, \bxi)[d]=\#A+l$.
Recall that $\blam^{+}\cup\{\gamma^+_d\}$ has a unique addable $(i+1)$-node $\tilde \gamma_d$.
Moreover, each removable $i$-node of $\blam$ naturally corresponds to a removable $(i+1)$-node of $\blam^+$ and hence corresponds to a removable $(i+1)$-node of $\bxi$. Therefore, by (\ref{biiplus1}), $N_{i+1}(\bxi, \bnu^+)[d]=-(\#B+l)$. Consequently, $N_i(\blam^+,\bxi)[d]+N_{i+1} (\bxi,\bnu^+)[d]=\#A-\#B=N_i(\blam, \bnu)[d]$.
Hence the coefficient of $s_{\bnu^+}$ in $\varTheta\circ F_i^{(a)}(\blam)$ is the same as the coefficient of $s_{\bnu^+}$ in ${^\alpha}\!F_i^{(a)}\circ \varTheta(\blam)=\tilde{F}^{(a)}_{i+1}\tilde{F}^{(a)}_i \blam^+$.
This completes the proof of the proposition.
\end{proof}

\begin{proof}
Let $\blam \in \mathscr P_{r, n}$. We fix an $r$-partition $\bnu$ such that $\blam\xrightarrow{a:i} \bnu$, where $0\le i<e$. We write $[\blam] \cup \{\gamma_1, \cdots, \gamma_a\}=\bnu$, where each $\gamma_d$ ($1\le d\le a$) is an addable $i$-node of $\blam$. For any $1\le d\le a$, $\gamma_d$ corresponds to the configuration on $L_\bs(\blam)$ where $(a_d, b_d-1)$ is occupied by a bead and $(a_d, b_d)$ is empty, with $b_d=i+c_de$.

For any $1\le d\le a$, define
$$N_i(\blam, \bnu)[d]=\Bigl(\#\add_i (\nu^{(J_{\gamma_d})} )\uparrow_{\gamma_d}-\#\rem_i (\lambda^{(J_{\gamma_d})})\uparrow_{\gamma_d}\Bigr)+ \sum^{J_{\gamma_d}-1}_{j=1}\Bigl(\#\add_i (\nu^{(j)})-\#\rem_i (\lambda^{(j)})\Bigr),$$
$N_i(\blam, \bnu)=\sum_{d=1}^a N_i(\blam, \bnu)[d].$ There are three possibilities:

{\it Case 1.} $i<\alpha$. In this case for any integer $j$ satisfying $j\equiv i\pmod e$, the columns $\col_{j-1}(\blam)$ and $\col_j(\blam)$ for $L_\bs(\blam)$ are the same as the columns $\col^+_{j-1}(\blam^{+k})$ and $\col^+_j(\blam^{+k})$ for $L_{\bs^{+k}}(\blam^{+k})$. By Lemma \ref{add and rem}, the addable and removable $i$-nodes for $\blam$ correspond exactly to the addable and removable $i$-nodes for $\blam^{+k}$.
Let $[\blam^{+k}]\cup \{\gamma^+_1, \cdots, \gamma^+_a\}=\bnu^{+k}$. For $1\le d\le a$, each $\gamma_d^+$ corresponds to the configuration on $L_\bs(\blam^{+k})$ where $(a_d, i-1+c_d(e+1))$ is occupied by a bead and $(a_d, i+c_d(e+1))$ is empty.
For any $1\le d\le a$, define
$$N_i(\blam^{+k}, \bnu^{+k})[d]=\Bigl(\#\add_i ({\nu^+}^{(J_{\gamma^+_d})} )\uparrow_{\gamma^+_d}-\#\rem_i ({\lambda^+}^{(J_{\gamma^+_d})})\uparrow_{\gamma^+_d}\Bigr)+ \sum^{J_{\gamma^+_d}-1}_{j=1}\Bigl(\#\add_i ({\nu^+}^{(j)})-\#\rem_i ({\lambda^+}^{(j)})\Bigr),$$
$N_i(\blam^{+k}, \bnu^{+k})=\sum_{d=1}^a N_i(\blam^{+k}, \bnu^{+k})[d].$
Hence, for all $1\le d\le a$, $N_i(\blam^{+k}, \bnu^{+k})[d]=N_i(\blam, \bnu)[d]$. That says, the coefficient of $s_{\bnu^{+k}}$ in $\varTheta^{(k)}\circ F_i^{(a)}(\blam)$ is the same as the coefficient of $s_{\bnu^{+k}}$ in ${^\alpha}\!\tilde{F}_i^{(a)}\circ \varTheta^{(k)}(\blam)$.
\smallskip

{\it Case 2.} $i>\alpha$. By the same argument used in Case 1 we show that  for all $1\le d\le a$, $N_i(\blam^{+k}, \bnu^{+k})[d]=N_i(\blam, \bnu)[d]$ in this case. That says, the coefficient of $s_{\bnu^{+k}}$ in $\varTheta^{(k)}\circ F_i^{(a)}(\blam)$ is the same as the coefficient of $s_{\bnu^{+k}}$ in ${^\alpha}\!\tilde{F}^{(a)}_i\circ \varTheta^{(k)}(\blam)$.

{\it Case 3.} $i=\alpha$. In this case for any integer $j$ satisfying $j\equiv i\pmod e$, the columns $\col_{j-1}(\blam)$ and $\col_j(\blam)$ for $L_\bs(\blam)$ are equal to the columns $\col^+_{j-1}(\blam^+)$ and $\col^+_{j+1}(\blam^+)$ for $L_\bs(\blam^+)$ respectively.
Let $1\le a\le r$, $b\equiv\alpha \pmod{e+1}$. By Definition \ref{Plusk}, if $b<i+k(e+1)$ then the position $(a, b)$ in $L_{\bs^{+k}}(\blam^{+k})$ has a bead; while if $b\ge i+k(e+1)$ then the position $(a, b)$ in $L_{\bs^{+k}}(\blam^{+k})$ is empty.

Any addable $i$-node of $\blam$ corresponds to a pair $(x,y)$, where $1 \le x \le r$ and $y=i+ze$, such that the position $(x, y-1)$ in the $r$-abacus $L_\bs(\blam)$ contains a bead and the position $(x, y)$ is empty.  Set $y':= i+z(e+1)$. Note that the columns $\col_{y-1}(\blam)$ and $\col_{y}(\blam)$ for $L_\bs(\blam)$ become the columns $\col_{y'-1}(\blam^{+k})$ and $\col_{y'+1}(\blam^+)$ for $L_\bs(\blam^{+k})$ after inserting the column $\overline{\rho}$. Since the position $(x, y-1)$ on $L_\bs(\blam)$ has a bead, by Lemma \ref{right}, on $L_{\bs^{+k}}(\blam^{+k})$, $(x, y'-1)$ and $(x, y')$ are both occupied by beads, $(x, y'+1)$ is empty position.
This corresponds to an addable $(i+1)$-node of $\blam^{+k}$. Similarly, one shows that any removable $i$-node $\eta$ of $\blam$ corresponds to a removable $i$-node $\eta^+$ of $\blam^{+k}$. Thus
the addable $i$-nodes and removable $i$-nodes of $\blam$ are in one-to-one correspondence with two subsets $J_{\blam^{+k},i+1}^{\rm{add}}, J_{\blam^{+k},i}^{\rm{rem}}$  of the addable $(i+1)$-nodes and removable $i$-nodes of $\blam^{+k}$ respectively. Moreover, any addable $(i+1)$-node $\tilde{\gamma}$ of $\blam^{+k}$ does not lie in $J_{\blam^{+k},i+1}^{\rm{add}}$ if and only if $\tilde{\gamma}$ corresponds to a position $(x,y')$ in $L_{\bs^{+k}}(\blam^{+k})$ such that both positions $(x, y-1)$ and $(x, y)$ in $L_\bs(\blam)$ are empty; similarly, any removable $i$-node $\tilde{\eta}$ of $\blam^{+k}$ does not lie in $J_{\blam^{+k},i}^{\rm{rem}}$ if and only if $\tilde{\eta}$ corresponds to a position  $(x,y')$ in $L_{\bs^{+k}}(\blam^{+k})$ such that both positions $(x, y-1)$ and $(x, y)$ in $L_\bs(\blam)$ are empty.

Let $1\le d\le a$, we set $$A:=\add_{i}(\blam)\uparrow_{\gamma_d},\,\,\, B:=\rem_{i}(\bnu)\uparrow_{\gamma_d}=\rem_{i}(\blam) \uparrow_{\gamma_d}. $$
Then by the discussion in the last paragraph, we have \begin{equa}\label{biiplus12}
A=\{\beta\in J_{\blam^{+k},i+1}^{\rm{add}}\mid \beta\succ{\gamma_d}\}.
\end{equa}
Let $\tilde{\bxi}$ be an $r$-partition such that $\blam^{+k}\xrightarrow{a : i+1}\tilde{\bxi} \xrightarrow{a : i} \bnu^{+k}$.
From the proof of Lemma {expansion}, we know that $\blam^{+k}\cup \{\gamma^+_1, \cdots, \gamma^+_a\}=\tilde \bxi$, $\bxi\cup \{\tilde \gamma^+_1, \cdots, \tilde \gamma^+_a\}=\bnu^{+k}$, where $\tilde \gamma_d$ is the unique addable $(i+1)$-node on $\blam^{+k}\cup \{\gamma^+_a\}$ for $1\le d\le a$.
For any $1\le d\le a$, define
$$N_i(\blam^{+k}, \tilde \bxi)[d]=\Bigl(\#\add_i (\tilde{\xi}^{(J_{\gamma^+_d})} )\uparrow_{\gamma^+_d}-\#\rem_i ({\lambda^{+k}}^{(J_{\gamma^+_d})})\uparrow_{\gamma^+_d}\Bigr)+ \sum^{J_{\gamma^+_d}-1}_{j=1}\Bigl(\#\add_i (\tilde{\xi}^{(j)})-\#\rem_i ({\lambda^{+k}}^{(j)})\Bigr),$$
$N_i(\blam^{+k}, \tilde \bxi)=\sum_{d=1}^a N_i(\blam^{+k}, \tilde \bxi)[d].$
For any $1\le d\le a$, define
$$N_i(\tilde \bxi, \bnu^{+k})[d]=\Bigl(\#\add_i ({\nu^{+k}}^{(J_{\tilde \gamma_d})} )\uparrow_{\tilde \gamma_d}-\#\rem_i (\tilde{\xi}^{(J_{\tilde \gamma_d})})\uparrow_{\tilde \gamma_d}\Bigr)+ \sum^{J_{\tilde \gamma_d}-1}_{j=1}\Bigl(\#\add_i ({\nu^{+k}}^{(j)})-\#\rem_i (\tilde{\xi}^{(j)})\Bigr),$$
$N_i(\tilde\bxi, \bnu^{+k})=\sum_{d=1}^a N_i(\tilde \bxi, \bnu^{+k})[d].$
By Lemma \ref{no+k}, $\blam^{+k}$ has no removable $(i+1)$-nodes.
Therefore, if we let $l$ denote the number of positions \((x,y)\) which lie to the right and below the position \((a_d,b_d)\) in \(L_\bs(\blam)\) and such that the positions \((x,y-1)\) and \((x,y)\) are both empty in \(L_\bs(\blam)\), then $N_{i+1}(\blam^{+k},\tilde{\bxi})=\#A+l$.
Recall that $\blam^{+k}\cup\{\gamma^+_d\}$ is the unique addable $i$-node $\{\tilde \gamma^+_d\}$. Moreover, each removable $i$-node of $\blam$ naturally corresponds to a removable $i$-node of $\blam^{+k}$ and hence corresponds to a removable $i$-node of $\tilde{\bxi}$.
Therefore, $N_{i+1}(\tilde{\bxi}, \bnu^{+k})[d]=-(\#B+l)$. Consequently, by (\ref{biiplus12}), $N_{i+1}(\blam^{+k},\tilde{\bxi})[d]+N_{i} (\tilde\bxi,\bnu^{+k})[d]=\#A-\#B=N_i(\blam, \bnu)[d]$.
Hence the coefficient of $s_{\bnu^{+k}}$ in $\varTheta^{(k)}\circ F_i(\blam)$ is the same as the coefficient of $s_{\bnu^{+k}}$ in ${^\alpha}\!F_i\circ \varTheta^{(k)}(\blam)=\tilde{F}_{i}^{(a)}\tilde{F}_{i+1}^{(a)} \blam^{+k}$.
This completes the proof of the proposition.
\end{proof}

\begin{lemma}\label{5.10}
Fix $k\in\N, \bs\in \overline{\mathcal A}^r_e$ and an integer $0\leq \alpha<e$. For any $r$-partitions $\blam,\bmu$, we have $\blam \prec \bmu$ if and only if $\blam^{+k} \prec \bmu^{+k}$.
\end{lemma}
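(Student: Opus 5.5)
Following the proof of Lemma \ref{4.9}, I would translate lexicographic comparison into a statement about bead positions on the $r$-abaci and then check that inserting the columns of Definition \ref{Plusk} preserves that comparison. For a pair $(\blam,\bs)$, the bead $\CIRCLE^i_j(\blam,\bs)$ sits in column $s_i+\lambda^{(i)}_j-j$. So $\blam\prec\bmu$ holds exactly when three things happen. First, the rows $L_{s_a}(\blam^{(a)})$ and $L_{s_a}(\bmu^{(a)})$ coincide for $a<j$. Second, the beads $\CIRCLE^j_m$ coincide for $m<k$. Third, $\CIRCLE^j_k(\blam,\bs)$ lies strictly to the left of $\CIRCLE^j_k(\bmu,\bs)$. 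The same dictionary applies to $(\blam^{+k},\bs^{+k})$ and $(\bmu^{+k},\bs^{+k})$, since both use the same multicharge $\bs^{+k}$.

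The key observation is that the passage $L_\bs(\blam)\mapsto L_{\bs^{+k}}(\blam^{+k})$ is the same operation on every row and does not depend on $\blam$. We insert a new column immediately to the left of each column $j\equiv\alpha\pmod e$. That new column is full of beads when $j\le\alpha+(k-1)e$ and empty when $j\ge\alpha+ke$. This gives an order-preserving injection $\phi$ from old column indices to new ones, and its complement is a fixed set $C$ of inserted columns with a prescribed fixed content. Consequently the $i$-th row of $L_{\bs^{+k}}(\blam^{+k})$ is $\phi(L_{s_i}(\blam^{(i)}))\sqcup(\text{beads in }C)$, where the second piece is the same for all $\blam$.

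I would prove both directions at once by the following steps.
\begin{enumerate}
\item Rows $a<j$: these agree before insertion if and only if they agree after insertion, because $\phi$ is injective and the added beads are common to both.
\item Row $j$: look at the first column, scanning from the right, in which the row-$j$ bead sets of $\blam$ and $\bmu$ differ. Because the inserted beads are common, this first difference in the new abacus is at $\phi$ of the first difference in the old abacus.
\item Conclusion: comparing beads counted from the right, $\blam\prec\bmu$ is equivalent to the first differing column, read from the right, containing a bead of $\bmu$ but not of $\blam$. Since $\phi$ is monotone and the beads of $C$ are common to both, this property is transported in both directions.
\end{enumerate}

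The point that needs care is that the inserted beads shift the bead indices $k$. Phrasing $\prec$ via the largest column at which the bead sets differ avoids this issue entirely. Indeed, $\lambda^{(j)}_m=\mu^{(j)}_m$ for $m<k$ together with $\lambda^{(j)}_k<\mu^{(j)}_k$ says exactly that the $k$-th bead from the right is the first discrepancy, and that $\bmu$ has the bead further right there. With this reformulation, no hypothesis on $\bs\in\widetilde{\mathcal A}^r_e$ or on lengths is needed, consistent with the statement of Lemma \ref{5.10}.
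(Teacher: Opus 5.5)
Your proposal is correct and follows essentially the paper's route: both read $\prec$ off the bead positions (the $m$-th bead of row $i$ sits in column $s_i+\lambda^{(i)}_m-m$) and use that the inserted beads occupy the same columns for $\blam$ and $\bmu$. The only difference is bookkeeping: the paper counts the $c$ inserted beads to the right of the $l$-th bead of $\mu^{(j)}$ and compares the $(l+c)$-th beads, whereas your ``largest differing column'' reformulation with the monotone injection $\phi$ absorbs this index shift and yields both implications at once.
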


\begin{proof} $\blam \prec \bmu$ if and only if there exist $j\in \{1,\ldots,r\}$ and $k\in \mathbb{Z}_{>0}$ such that $\lambda^{(a)}=\mu^{(a)}$ for all $a\in \{1,\ldots,j-1\}$, $\lambda^{(j)}_m=\mu^{(j)}_m$ for all $m\in \{1,\ldots,l-1\}$, and $\lambda^{(j)}_l<\mu^{(j)}_l$.

Recall $\CIRCLE_j^i(\blam, \bs)$ denotes the $j$-th bead on the abacus $L_{s_i}(\blam^{(i)})$, counted from right to left. By definition, $\CIRCLE_j^i(\blam, \bs)$ is located in column $s_i+\lambda^{(i)}_j-j$ on the $i$-th runner $L_{s_i}(\blam^{(i)})$ of the $r$-abacus $L_\bs(\blam)$. Hence, $\blam \prec \bmu$ if and only if $L_{s_a}(\blam^{(a)})=L_{s_a}(\bmu^{(a)})$ for all $a\in \{1, \ldots, j-1\}$, $\CIRCLE^j_m$ of $L_{s_j}(\lambda^{(j)})$ and that of $L_{s_j}(\mu^{(j)})$ are in the same column for all $m\in \{1, \ldots, l-1\}$, and $\CIRCLE^j_l$ of $L_{s_j}(\lambda^{(j)})$ lies strictly to the left of $\CIRCLE^j_l$ of $L_{s_j}(\mu^{(j)})$.

Without loss of generality, we may assume that, in passing from $L_\bs(\bmu)$ to $L_{\bs^{+k}}(\bmu^{+k})$, $c$ beads are added to the right of $\CIRCLE^j_l$ in $L_{s_j}(\mu^{(j)})$.
Note that the beads inserted in passing from $L_\bs(\blam)$ to $L_{\bs^{+k}}(\blam^{+k})$ and from $L_\bs(\bmu)$ to $L_{\bs^{+k}}(\bmu^{+k})$ lie in the same columns. Therefore, $L_{s^{+k}_a}((\lambda^{+k})^{(a)})=L_{s^{+k}_a}((\mu^{+k})^{(a)})$ for all $a\in \{1, \ldots, j-1\}$, the bead $\CIRCLE^j_m$ of $L_{s^{+k}_j}\bigl((\lambda^{+k})^{(j)}\bigr)$ and that of $L_{s^{+k}_j}\bigl((\mu^{+k})^{(j)}\bigr)$ lie in the same column for all $m\in \{1, \ldots, l+c-1\}$, and the bead $\CIRCLE^j_{l+c}$ of $L_{s^{+k}_j}\bigl((\lambda^{+k})^{(j)}\bigr)$ lies to the left of the bead $\CIRCLE^j_{l+c}$ of $L_{s^{+k}_j}\bigl((\mu^{+k})^{(j)}\bigr)$. It follows that $\blam \prec \bmu$ if and only if $\blam^{+k} \prec \bmu^{+k}$.
\end{proof}

\begin{proposition}\label{KeyCanonicalBases2}
Fix $k\in\N, \bs\in \overline{\mathcal A}^r_e$ and an integer $0\leq \alpha<e$. Let $\bmu$ be an $(e,\bs)$-regular $r$-partition. Suppose $(k-1)e \ge s_r+n$. Then $G^{\bs^{+k}}_{e+1}(\bmu^{+k})=\varTheta^{(k)}(G^\bs_e(\bmu))$.
\end{proposition}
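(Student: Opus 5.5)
The plan is to follow the strategy of Proposition \ref{KeyCanonicalBases}, with $\varTheta^{(k)}$, ${^\alpha}\!\tilde F_i$ and Proposition \ref{diagram commute2} replacing $\varTheta$, ${^\alpha}\!F_i$ and Proposition \ref{diagram commute}. The characterisation of the canonical basis will be used.

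\textbf{Step 1: reduce to bar-invariance.} Write
$$G^\bs_e(\bmu)=s_\bmu+\sum_{\blam\lhd\bmu}d^{e,\bs}_{\blam\bmu}(v)s_\blam .$$
Since $\lhd$ implies $\prec$, Lemma \ref{5.10} gives
$$\varTheta^{(k)}(G^\bs_e(\bmu))=s_{\bmu^{+k}}+\sum_{\blam^{+k}\prec\bmu^{+k}}d^{e,\bs}_{\blam\bmu}(v)s_{\blam^{+k}},$$
with all coefficients in $v\Z[v]$. By Proposition \ref{PlusRegular2}, $\bmu^{+k}$ is $(e+1,\bs^{+k})$-regular. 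So $G^{\bs^{+k}}_{e+1}(\bmu^{+k})$ is the unique bar-invariant vector of $M^{+k}_{\bs^{+k}}$ that is congruent to $s_{\bmu^{+k}}$ modulo $\sum_{\brho\prec\bmu^{+k}}v\Z[v]s_\brho$. (As in the earlier proof, we use the version of uniqueness with respect to the lexicographic order, via James's staggered sequences \cite[\S10]{J}.) It therefore suffices to show that $\varTheta^{(k)}(G^\bs_e(\bmu))$ lies in $M^{+k}_{\bs^{+k}}$ and is bar-invariant.

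\textbf{Step 2: handle the empty multipartition.} By Lemma \ref{block moving vector1} and \cite[Theorem 4.1]{F1}, the pair $(\bvarnothing^{+k},\bs^{+k})$ has weight $0$ and lies in a simple block. Hence $s_{\bvarnothing^{+k}}=G^{\bs^{+k}}_{e+1}(\bvarnothing^{+k})$, so it is bar-invariant and lies in $M^{+k}_{\bs^{+k}}$.

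\textbf{Step 3: transport the staggered sequence.} Take the staggered residue sequence for $\bmu$ and set
$$A_\bmu=F_{i_1}^{(a_1)}\cdots F_{i_m}^{(a_m)}\bvarnothing=s_\bmu+\sum_{\blam\prec\bmu}c_{\blam\bmu}s_\blam .$$
Iterating Proposition \ref{diagram commute2} gives
$$\varTheta^{(k)}(A_\bmu)={^\alpha}\!\tilde F_{i_1}^{(a_1)}\cdots{^\alpha}\!\tilde F_{i_m}^{(a_m)}s_{\bvarnothing^{+k}} .$$
Here the hypothesis $(k-1)e\ge s_r+n$ must hold for every intermediate multipartition, whose sizes are at most $n$; since the condition is monotone in $n$, it does. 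Each ${^\alpha}\!\tilde F_i^{(a)}$ is a product of divided powers $\tilde F_j^{(a)}$, which are bar-invariant operators. So $\varTheta^{(k)}(A_\bmu)$ is bar-invariant and lies in $M^{+k}_{\bs^{+k}}$.

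\textbf{Step 4: induct on $\prec$.} Write
$$G^\bs_e(\bmu)=A_\bmu-\sum_{\bnu\prec\bmu}\alpha_{\bnu\bmu}G^\bs_e(\bnu),$$
with bar-invariant coefficients $\alpha_{\bnu\bmu}$ and $(e,\bs)$-regular $\bnu$ of the same size $n$. Applying $\varTheta^{(k)}$, which is $\Q(v)$-linear, and inducting on $\prec$ shows that $\varTheta^{(k)}(G^\bs_e(\bmu))$ is bar-invariant. This completes the argument.

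\textbf{Main obstacle.} The delicate point is Step 3: checking that the iterated commutation is legitimate. Proposition \ref{diagram commute2} is stated for a fixed pair, and its hypothesis involves $n$. So one must verify that, for every $r$-partition $\blam$ appearing in an intermediate stage $F_{i_t}^{(a_t)}\cdots\bvarnothing$, we have $(k-1)e\ge s_r+|\blam|$. One must also check that Lemma \ref{right} and Lemma \ref{no+k} apply uniformly to all such $\blam$, so that the images of the intermediate stages stay inside $\varTheta^{(k)}(\mathcal F_\bs)$. Since $|\blam|\le n$, I expect this to follow directly, but it is the step I would check first.
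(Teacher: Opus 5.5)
Your proposal is correct and is essentially the paper's own proof, which consists of the single remark that the argument of Proposition \ref{KeyCanonicalBases} goes through unchanged, with Proposition \ref{PlusRegular2}, Proposition \ref{diagram commute2} and Lemma \ref{5.10} in place of their empty-runner counterparts. Your extra checks are correct and make explicit what the paper leaves implicit: that $\varTheta^{(k)}(G^\bs_e(\bmu))$ lies in the highest-weight submodule, so the bar involution and the uniqueness characterisation apply, and that the size-dependent hypothesis $(k-1)e\ge s_r+n$ holds for every intermediate stage of the staggered sequence.
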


\begin{proof} Applying Propositions \ref{PlusRegular2}, \ref{diagram commute2} and Lemma \ref{5.10}, one proves the proposition by exactly the same argument as in the proof of Proposition \ref{KeyCanonicalBases}.
\end{proof}

\medskip
\noindent
{\textbf{Proof of Theorem B}: } This follows from Propositions \ref{PlusRegular2}, \ref{KeyCanonicalBases2} and Theorem \ref{A1}.\hfill\qed
\medskip

Note that the cyclotomic Hecke algebra $\H_{r,n}^{\Lam_\bs}$ depends only on $e$ and the multi-set $\{s_j+e\Z\mid 1\leq j\leq r\}$ but not on $\{s_j\mid 1\leq j\leq r\}$. Given any cyclotomic Hecke algebra $\H_{r,n}^{\Lam_\bs}$,
we can always replace certain $s_j$ by $s_j+te$ for some $1\leq j\leq r$ and $t\in\Z$ so that $\bs=(s_1,\cdots,s_r)$ satisfies that $\bs\in \overline{\mathcal A}^r_e$ and $(k-1)e \ge s_r+n$ without changing the algebra $\H_{r,n}^{\Lam_\bs}$ and the modules $S^\blam, D^\bmu$.

\begin{example} Suppose that $e=4, \bs=(0,2), k=3, r=2, n=4, \alpha=0$, $\blam=((1^2),(1^2)), \bmu=((2),(2))$, $(k-1)e=(3-1)e\ge 2+4=s_r+n$. Then we have that $$
\blam^{+k}=((8, 4, 1^3),(6, 2, 1^2)),\,\,\quad\bmu^{+k}=((8, 4,2, 1),(6, 2^2)), $$
and $$
\bigl[\hat{S}^{((8, 4, 1^3),(6, 2, 1^2)))}:\hat{D}^{((4, 2, 1),(2^2))}\bigr]=\bigl[S^{((1^2),(1^2))}:D^{((2),(2))}\bigr]=1.
$$
\end{example}

\section{Declarations}

The authors did not use AI or LLM tools for any aspect of this research or the
writing of this manuscript.

\bigskip

   .


\begin{thebibliography}{99}

\bibitem{A1} {\sc S.~Ariki}, {\em On the decomposition numbers of the Hecke algebra of $G(m,1,n)$}, J. Math. Kyoto Univ., {\bf 36} (1996), 789--808.

\bibitem{AK} {\sc S.~Ariki and K.~Koike}, {\em  A Hecke algebra of $(\Z/r\Z)\wr\Sym_n$ and construction of its representations}, Adv. Math., {\bf 106} (1994), 216--243.

\bibitem{AM} {\sc S.~Ariki, A.~Mathas}, {\em The number of simple modules of the Hecke algebras of type $G(r,1,n)$}, Math. Zeit., {\bf 233}(3), (2000), 601--623.

%\bibitem{BCG}
%{\sc O.~Brunat, N.~Chapelier-Laget and T.~Gerber}, {\em Generalised core partitions and Diophantine equations}, arXiv: 2403.11191.


\bibitem{BM:cyc}
{\sc M.~Brou{\'e} and G.~Malle}, {\em Zyklotomische {H}eckealgebren}, in: Repr{\'e}sentations Unipotentes G{\'e}n{\'e}riques et Blocs des Groupes R{\'e}ductifs Finis,
  Ast\'erisque., {\bf 212} (1993), 119--189.

\bibitem{BrundanKleshchev09}
{\sc J.~Brundan and A.~Kleshchev}, {\em Blocks of cyclotomic Hecke algebras and Khovanov--Lauda algebras}, Invent. Math., {\bf 178}no.~3, (2009), 451--484.

\bibitem{BK:GradedDecomp}
{\sc J.~Brundan and A.~Kleshchev}, {\em Graded decomposition numbers for cyclotomic Hecke algebras}, Adv. Math., {\bf 222} (2009), 1883--1942.

\bibitem{C} {\sc I.V.~Cherednik}, {\em A new interpretation of Gelfand-Tzetlin bases}, Duke Math. J., {\bf 54}(2) (1987), 563--577.

\bibitem{DellA24b}
{\sc A.~Dell'Arciprete}, {\em Full runner removal theorem for {A}riki-{K}oike algebras}, J. Algebra, {\bf 660} (2024), 513--563.

\bibitem{DP2026}
{\sc A.~Dell'Arciprete, L.~Putignano}, {\em Empty runner removal theorem for Ariki-Koike algebras}, J. Algebra, {\bf 698} (2026), 316--348.

\bibitem{DJ}
{\sc R.~Dipper and G.~James}, {\em $q$-tensor space and $q$-Weyl modules}, Trans. Amer. Math. Soc., {\bf 327} (1991), 251--282.


\bibitem{DJM} {\sc R.~Dipper, G.D.~James and A.~Mathas}, {\em Cyclotomic $q$-Schur algebras},  Math. Zeit., {\bf 229}(3) (1998), 385--416.

\bibitem{DM} {\sc R.~Dipper and A.~Mathas}, {\em Morita equivalences of Ariki-Koike algebras}, Math. Z., {\bf 240} (2002), 579--610.

\bibitem{F1} {\sc M.~Fayers}, {\em Weights of $r$-partitions and representations of Ariki-Koike algebras}, Adv. Math., \textbf{206} (2008) 112--144.

\bibitem{F2} {\sc M.~Fayers}, {\em Core blocks of Ariki-Koike algebras}, J. Algebr. Comb., {\bf 26} (2007), 47--81.

%\bibitem{F2007} {\sc M.~Fayers}, {\em James's Conjecture holds for weight four blocks of Iwahori-Hecke algebras}, J. Algebra, 317 (2007) 593--633.

\bibitem{Fay08} {\sc M.~Fayers}, {\em Another runner removal theorem for $v$-decomposition numbers of Iwahori-Hecke algebras and $q$-Schur algebras}, {\bf 310} (2007), 396--404.

%\bibitem{F2008} {\sc M.~Fayers}, {\em Decomposition numbers for weight three blocks of symmetric groups and Iwahori-Hecke algebras},
%Trans. Amer. Math. Soc., {\bf 360} (2008), 1341--1376.
%
%\bibitem{Fay10} {\sc M.~Fayers}, {\em An {LLT}-type algorithm for computing higher-level canonical bases}, J. Algebra, {\bf 214} (2010), 2186--2198.

\bibitem{GL} {\sc J.~Graham and G.~Lehrer}, {\em Cellular algebras}, Invent. Math., {\bf 123} (1996), 1--34.

\bibitem{HM}
{\sc J.~Hu and A.~Mathas}, {\em  Graded cellular bases for the cyclotomic Khovanov-Lauda-Rouquier algebras of type $A$}, Adv. Math., {\bf 225} (2010), 598--642.

\bibitem{HHLQ} {\sc W.~Hu, F.~Huang, Y.~Li and X.~Qi}, {\em Level-rank dualities and moving vectors}, preprint, {arXiv:2604.25340}, 2026.

\bibitem{J} {\sc N.~Jacon}, {\em Kleshchev $r$-partitions and extended Young diagrams}, Adv. Math., {\bf 339} (2018), 367--403.

\bibitem{JL} {\sc N.~Jacon and C.~Lecouvey}, {\em Cores of Ariki-Koike algebras}, Doc. Math., {\bf 26} (2021), 103--124.

%\bibitem{James}
%{\sc G.D.~James}, {\em Some combinatorial results involving Young diagrams}, Proc. Camb. Phil. Soc., {\textbf{83}} (1978), 1--10.

\bibitem{JK}
{\sc G.D.~James and A.~Kerber}, {\em The representation theory of the symmetric group}, Encyclopedia of Mathematics and its Applications, {\textbf{16}}, Addison-Wesley, 1981.

\bibitem{JM}
{\sc G.D.~James and A.~Mathas}, {\em Equating decomposition numbers for different primes}, {J. Algebra},  {\textbf{258}} (2002), 599--614.

\bibitem{LLT96}
{\sc A.~Lascoux, B.~Leclerc, and J.-Y.~Thibon}, {\em Hecke algebras at roots of unity and crystal bases of quantum affine
  algebras}, Comm. Math. Phys., {\bf 181} (1996), 205--263.

\bibitem{LQ2025} {\sc Y.~Li and X.~Qi}, {\em Moving vectors I: Representation type of blocks of Ariki-Koike algebras},
J. Lond. Math. Soc., {\bf 111}(5), (2025), e70169.

\bibitem{LQT2026} {\sc Y.~Li, X.~Qi and K.~Tan}, {\em Moving vectors and core blocks of Ariki-Koike
algebras}, {J. Algebra}, {\bf 694} (2026), 497-563.

\bibitem{LT}
{\sc Y.~Li and K.~Tan}, {\em Cores and weights of multipartitions and blocks of Ariki-Koike algebras}, {J. Algebr. Comb.} {\bf 61} (2025), 43.

\bibitem{LZZ}
{\sc Y.~Li, J.~Zhang, and S.~Zhu}, {\em Core abaci and Diophantine equations I: fundamental weight}, preprint, arXiv:2604.23311, 2026.

\bibitem{LM} {\sc S.~Lyle and A.~Mathas}, {\em Blocks of cyclotomic Hecke algebras}, Adv. Math., {\bf 216} (2007), 854--878.

\bibitem{Ma} {\sc A.~Mathas},   {\em Matrix units and generic degrees for the Ariki-Koike algebras}, {J. Algebra,} {\bf 281} (2004), 695--730.

\bibitem{Q} {\sc T.~Qin},   {\em Subdivision and runner removal theorems}, {J. Algebra,} {\bf 711} (2027), 590--675.


\bibitem{U} {\sc D.~Uglov}, {\em Canonical bases of higher level $q$-deformed Fock spaces and Kazhdan-Lusztig polynomials}, in Physial
Combinatorics (ed. M. Kashiwara, T. Miwa), Progress in Math., {\bf 191}, Birkhauser, (2000).

\bibitem{VV} {\sc M.~Varagnolo, E.~Vasserot}, {\em On the decomposition matrices of the quantized Schur algebra}, Duke
J. Math., {\bf 100} (1999), 267--297.
\end{thebibliography}
\end{document}